\theoremstyle{definition}
\newtheorem{defi}{Definition}[section]
\theoremstyle{plain}
\newtheorem{prop}[defi]{Proposition}
\newtheorem{lem}[defi]{Lemma}
\newtheorem{stel}[defi]{Theorem}
\newtheorem{gev}[defi]{Corollary}
\newtheorem{ques}[defi]{Question}
\newtheorem*{stel*}{Theorem}
\newtheorem*{prop*}{Proposition}
\newtheorem*{gev*}{Corollary}
\theoremstyle{remark}
\newtheorem{opm}[defi]{Remark}
\newcommand{\La}{\mathcal{L}}
\newcommand{\nat}{\mathbb{N}}
\newcommand{\zz}{\mathbb{Z}}
\newcommand{\rr}{\mathbb{R}}
\newcommand{\ff}{\mathbb{F}}
\newcommand{\qq}{\mathbb{Q}}
\newcommand{\mc}{\mathcal}
\newcommand{\mf}{\mathfrak}
\newcommand{\mbb}{\mathbb}
\newcommand{\pow}[1]{^{(#1)}}
\newcommand{\Lar}{{\mathcal{L}_{\rm ring}}}
\DeclareMathOperator{\subs}{\vert}
\DeclareMathOperator{\charac}{char}
\DeclareMathOperator{\Frac}{Frac}
\DeclareMathOperator{\Trd}{Trd}
\DeclareMathOperator{\Nrd}{Nrd}
\DeclareMathOperator{\Odd}{\mathsf{Odd}}
\newcommand{\fie}{\mathsf{field}}
\newcommand{\ovl}{\overline}
\newcommand{\sep}[3][]{
\ifx &#1&
{{#2}_{(#3)}}
\else
{{#2}_{(#3, #1)}}
\fi
}
\title{Universally defining $\zz$ in $\qq$ with $10$ quantifiers}
\author{Nicolas Daans}
\date{\today}
\thanks{This is the accepted version of the following article, which has been published in final form at \href{https://doi.org/10.1112/jlms.12864}{https://doi.org/10.1112/jlms.12864}: \\
Nicolas Daans. “Universally defining $\zz$ in $\qq$ with $10$ quantifiers”. In: \textit{Journal of the London Mathematical Society} 109.2 (2024), e12864
}
\address{Universiteit Antwerpen, Departement Wiskunde, Middelheimlaan 1, 2020 Antwerpen, Belgium.}
\address{Charles University, Faculty of Mathematics and Physics, Department of Algebra, Sokolov\-sk\' a 83, 186~75 Praha~8, Czech Republic.}
\email{nicolas.daans@matfyz.cuni.cz}
\begin{document}
\begin{abstract}
We show that for a global field $K$, every ring of $S$-integers has a universal first-order definition in $K$ with $10$ quantifiers.
We also give a proof that every finite intersection of valuation rings of $K$ has an existential first-order definition in $K$ with $3$ quantifiers.
\end{abstract}
\maketitle

\section{Introduction}
It is a longstanding open problem whether the ring of integers $\zz$ has an \emph{existential first-order definition} in the field of rational numbers $\qq$ in the signature of rings.
In more algebraic terms, the question is whether there exist a natural number $m$ and a polynomial $F \in \qq[X, Y_1, \ldots, Y_m]$ such that
$$ \zz = \lbrace x \in \qq \mid \exists y_1, \ldots, y_m \in \qq : F(x, y_1, \ldots, y_m) = 0 \rbrace.$$
While the answer to this question still eludes us, Koenigsmann was able to show that the complement $\qq \setminus \zz$ is existentially definable in $\qq$ \autocite{Koe16}.
In other words, he showed that there exist a natural number $m$ and a polynomial $F \in \qq[X, Y_1, \ldots, Y_m]$ such that
\begin{equation}\label{eq:ZVinQ}
\zz = \lbrace x \in \qq \mid \forall y_1, \ldots, y_m \in \qq : F(x, y_1, \ldots, y_m) \neq 0 \rbrace.
\end{equation}
One also says that $\zz$ has a \emph{universal first-order definition} in $\qq$, and the number $m$ is called the \emph{number of quantifiers}.
In this note, we show that one can find a polynomial $F$ such that \eqref{eq:ZVinQ} holds already for $m = 10$, i.e.~$\zz$ has a universal first-order definition in $\qq$ with $10$ quantifiers.

In fact, we show something more. Work by Park \autocite{Par13}, Eisentr{\"a}ger and Morrison \cite{Eis18} and the author \autocite{DaansGlobal} revealed that Koenigsmann's method can be applied more generally to show that in any global field $K$, any ring of $S$-integers has a universal first-order definition.
By a \emph{global field} we mean either a \emph{number field}, i.e.~a finite field extension of $\qq$, or a \emph{global function field}, i.e.~ a function field in one variable over a finite field.
For a global field $K$ and a finite (possibly empty) set $S$ of valuations on $K$, the \emph{ring of $S$-integers} is defined to be the intersection of all valuation rings of $K$ except those which are given by valuations in $S$.
Observe that $\zz$ is the ring of $\emptyset$-integers of $\qq$.
Our main result can be summarised as follows.
\begin{stel*}[see \Cref{T:nicomainthmQO}]
Let $K$ be a global field, $S$ a finite set of valuations on $K$. There exists a polynomial $F \in K[X, Y_1, \ldots, Y_{10}]$ such that, for the ring of $S$-integers $\mc{O}_S$, we have
\begin{displaymath}
\mc{O}_S = \lbrace x \in K \mid \forall y_1, \ldots y_{10} \in K : F(x, y_1, \ldots, y_{10}) \neq 0 \rbrace.
\end{displaymath}
\end{stel*}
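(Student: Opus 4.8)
The plan is to follow the template of Koenigsmann's theorem on $\qq\setminus\zz$ from \autocite{Koe16}, in the uniform form developed in \autocite{Par13, Eis18, DaansGlobal}, but organised so as to keep strict control over the number of quantifiers. Passing to complements, it suffices to produce an \emph{existential} formula $\varphi(x)$, with $10$ quantifiers, defining
$$ K\setminus\mc{O}_S \;=\; \{\, x\in K \mid \exists\, v\notin S :\ v(x)<0 \,\}, $$
the set of elements of $K$ having a pole at some place outside $S$. Then $\mc{O}_S$ is defined by $\lnot\varphi$, and a standard manipulation — which, over a global field, does not raise the number of quantifiers, using that $K$ is never quadratically closed to encode finite conjunctions of polynomial equations by a single one — rewrites $\lnot\varphi$ as $\forall y_1,\dots,y_{10}:\ F(x,y_1,\dots,y_{10})\neq 0$ for a single polynomial $F$. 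The mere existence of such an $F$ is already established in \autocite{Koe16, Par13, Eis18, DaansGlobal}; the whole content here is the bound $10$.

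The heart of the argument is a \emph{quaternionic detector} for poles. For $a,b\in K^{\times}$ one has the quaternion algebra $(a,b)$, and the property that $(a,b)$ splits — equivalently, that its reduced norm form $\langle 1,-a,-b,ab\rangle$ is isotropic — is a positive existential condition on $(a,b)$, as are twisted variants such as isotropy of $\langle 1,-a,-b,ab,-c\rangle$. The key local input is that, at a place $v$ where $(a,b)$ ramifies (so that over $K_v$ it is the division algebra), Hilbert-symbol conditions of the shape ``$(a,\,b-xt^{2})_v$ is trivial for all $t$'', or a suitable positive reformulation thereof, pin down the valuation ring: the condition holds precisely when $v(x)\ge 0$, while its failure exhibits, \emph{existentially}, a pole of $x$ at $v$. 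Proving this rests on the standard but delicate analysis of representation by quadratic forms of rank $\le 5$ over local fields, carried out uniformly across the residually odd (``tame'') places, the residually even (``dyadic'') places, and the archimedean places — the uniformity being exactly what the local lemmas of \autocite{DaansGlobal} supply. Quantifying over the parameters $a,b$, which can be made to single out an arbitrary place of $K$, then turns ``$\exists\, v\notin S:\ v(x)<0$'' into an existential statement.

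Two points need care. First, by reciprocity every quaternion algebra over $K$ ramifies at a finite set of places of \emph{even} cardinality, so $(a,b)$ cannot be made to ramify at a single prescribed place; the formula must therefore behave correctly in the presence of the accompanying ``parasitic'' ramified places. The clean remedy — which simultaneously keeps the places of $S$, and in the number-field case the archimedean places, out of the detector's field of view — is to require $a,b$ to be tame at every place of $S$ and totally positive: then $(a,b)$ is unramified at $S$ and at $\infty$, so every ramified place of $(a,b)$, parasitic or not, lies outside $S$, and the detector fires exactly when $x$ has a pole outside $S$. The ``tame at $S$'' requirement is an integrality condition on $a,b$ and their inverses at a fixed finite set of places (with a mildly stronger, but still existential, condition at the finitely many dyadic places of $S$), hence is supplied by the $3$-quantifier existential definition of a finite intersection of valuation rings proved in this paper; total positivity is a bounded sum-of-squares condition. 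Second, everything must be uniform in the global field $K$. Assembling the pieces, the existential definition of $K\setminus\mc{O}_S$ has the shape $\exists\, a,b$ (parameters singling out a place) together with a bounded block of existential witnesses for the Hilbert-symbol, valuation-ring and positivity conditions; a careful accounting — reusing witness variables across sub-conditions, and absorbing the parity and parasitic-place bookkeeping without extra variables — brings the total down to $10$.

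I expect the quantifier count, rather than the qualitative conclusion, to be the genuine obstacle. The delicate points should be: (i) expressing the local valuation-ring condition at dyadic and archimedean places with no more quantifiers than at tame places, so that one uniform block of witnesses serves everywhere; (ii) imposing the ``tame at $S$, totally positive'' constraint on the parameters using only the $3$ quantifiers afforded by the finite-intersection result, plus whatever positivity costs, and not a quantifier more; and (iii) arranging the Boolean combination of the sub-formulas so that a variable introduced for one clause can be re-used in another — which is ultimately what collapses the dozen-plus quantifiers of a naive assembly down to exactly $10$.
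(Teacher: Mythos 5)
Your plan has the right overall shape — pass to the complement, detect poles outside $S$ by ramification of a suitably constrained quaternion algebra $[a,b)$, and count quantifiers carefully — and this is indeed the skeleton of the paper's argument (\Cref{P:EtoAgeneral}, \Cref{nicomainthm}, \Cref{T:nicomainthmQO}). But the statement you set out to prove is, by your own framing, entirely about the bound $10$, and your sketch never produces or verifies that bound; it only names where the difficulty lies. Several of the ingredients you propose also differ from the paper's in ways that matter for the count. First, to keep $[a,b)$ split at the real places you invoke a ``totally positive'' clause as a sum of squares; over $\qq$ alone that costs four quantifiers and does not fit in the budget. The paper avoids this entirely by putting a square $a^2$ in the first slot of $[a^2,b\pi)_K$, which makes the algebra split at every embedding into $\rr$ for free. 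Second, the detector you write, ``$(a,\,b-xt^{2})_v$ is trivial for all $t$'', has a universal quantifier over $t$; the ``suitable positive reformulation'' is exactly the hard step, and you do not supply it. The paper uses a different mechanism: the identity $\bigcap_{v\in\Delta Q}\mc{O}_v = S(Q)+S(Q)$ (\Cref{T:Poonensets}), which makes $\bigcap_{v\in\Delta[a^2,b\pi)}\mc{O}_v$ uniformly $\exists_6$-definable (\Cref{C:EdefinabilitySemilocalRingsGlobalFields}), combined with an explicit rational function $g(a,b)$ engineered so that $\tfrac{a^2x^2g(a,b)}{1-x-a^2x^2}$ lies in that semilocal ring precisely when $x$ has a pole outside $S$ (\Cref{lem:gcharnot2}, \Cref{lem:gchar2}, \Cref{nicomainthm}).

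Third, the constraint ``$(a,b)\in\Phi_u^S$'' is itself $\exists_3$-definable (\Cref{L:XE3}), and the final count is $2+3+6-1=10$: the $-1$ is the nontrivial quantifier saving from \Cref{cor:finGenOverPerf} when intersecting two existentially definable conditions over a field finitely generated over a perfect subfield. Your ``reusing witness variables across sub-conditions'' is a gesture at this phenomenon but is not the same thing and is not established. Finally, the argument requires a preliminary normalization of $S$ (enlarge to contain all dyadic places in characteristic $0$, arrange $S=\Odd(\pi)$ of odd cardinality, and fix $u$ with $X^2-X-u^2$ irreducible in each residue field) before any of the local lemmas apply; this step is missing from your sketch. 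In short: the qualitative blueprint is right and matches the paper, but the content of the theorem — the number $10$ — is not proved, and the specific devices you propose (total positivity, the universally-quantified Hilbert symbol) would, as stated, blow the quantifier budget or leave a non-existential condition unhandled.
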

In \autocite{Koe16, Par13, Eis18} the number of quantifiers was not counted; according to a preprint of Koenigsmann's article, his technique leads to a universal definition with $418$ quantifiers \autocite[Theorem 1]{Koe16unpub}.
In \autocite{DaansGlobal} it was shown that rings of $S$-integers in global fields have a universal definition with $37$ quantifiers; in the case of $\zz$ in $\qq$, this was further refined by Sun and Zhang to $32$ quantifiers in \autocite{ZhangSunZinQ}.

The study of the number of quantifiers needed to existentially define subsets of fields is motivated for several reasons.
For example, it is well-known that, if $\zz$ would be existentially definable in $\qq$, then it would follow that there is no algorithm which decides whether or not a polynomial equation has a zero in $\qq$.
This observation can be made quantitative (see \autocite[Proposition 8.21, Remark 8.22]{DDF}): if $\zz$ would be existentially definable in $\qq$ with $N$ quantifiers for some natural number $N$, then it would follow that every recursively enumerable subset of $\qq$ would be existentially definable with $12N$ quantifiers.
In particular, it would then follow from the negative solution to Hilbert's 10th Problem that there is no algorithm which decides whether or not a polynomial equation in $12N$ variables has a zero over $\qq$.

We can use a similar argument to deduce the following undecidability result from the universal definability of $\zz$ in $\qq$ with $10$ quantifiers:

\begin{gev*}[see \Cref{C:undecidable}]
There exists $F \in \zz[X, Y_1, \ldots, Y_{9}, Z_1, \ldots, Z_{10}]$ with the following property.
There is no algorithm which decides, for a given $x \in \qq$, whether or not
$$ \forall y_1, \ldots, y_{9} \in \qq\enspace \exists z_1, \ldots, z_{10} \in \qq : F(x, y_1, \ldots, y_{9}, z_1, \ldots, z_{10}) = 0. $$
\end{gev*}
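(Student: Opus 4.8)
The plan is to bootstrap from \Cref{T:nicomainthmQO} together with the DPRM theorem in its sharpest form: by the work of Davis--Putnam--Robinson and Matiyasevich, together with the successive reductions of the number of unknowns by Matiyasevich--Robinson and Matiyasevich, every recursively enumerable subset of $\nat$ can be written as $\{a\in\nat\mid\exists v_1,\dots,v_9\in\nat:\,Q(a,v_1,\dots,v_9)=0\}$ for a suitable $Q\in\zz[X,V_1,\dots,V_9]$. Applying \Cref{T:nicomainthmQO} to $K=\qq$ with $S=\emptyset$ (so that $\mc{O}_S=\zz$) and clearing denominators produces $G\in\zz[X,Y_1,\dots,Y_{10}]$ with $\qq\setminus\zz=\{t\in\qq\mid\exists z_1,\dots,z_{10}\in\qq:\,G(t,\vec z)=0\}$; thus $\qq\setminus\zz$ is existentially definable in $\qq$ with $10$ quantifiers, in the spirit of \autocite[Proposition 8.21, Remark 8.22]{DDF}.

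I would first upgrade this to an existential definition of $\qq\setminus\nat$ with the \emph{same} number of quantifiers. Since every positive rational is a sum of four rational squares (Lagrange), a rational $t$ is negative exactly when $t(s_1^2+s_2^2+s_3^2+s_4^2)+1=0$ for some $s_1,\dots,s_4\in\qq$; because $\qq\setminus\nat=(\qq\setminus\zz)\cup\{t\in\qq\mid t<0\}$, the polynomial
\[
\tilde G(T,Z_1,\dots,Z_{10}):=G(T,Z_1,\dots,Z_{10})\cdot\bigl(T(Z_1^2+Z_2^2+Z_3^2+Z_4^2)+1\bigr)\in\zz[T,Z_1,\dots,Z_{10}]
\]
satisfies $\qq\setminus\nat=\{t\in\qq\mid\exists z_1,\dots,z_{10}\in\qq:\,\tilde G(t,\vec z)=0\}$. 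Passing from $\zz$ to $\nat$ at this point is exactly what makes the $9$-unknowns theorem over $\nat$, rather than its weaker counterpart over $\zz$, applicable below.

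Next I would fix a non-recursive recursively enumerable set $A\subseteq\nat$ together with $Q\in\zz[X,V_1,\dots,V_9]$ such that $A=\{a\in\nat\mid\exists\vec v\in\nat^9:\,Q(a,\vec v)=0\}$, and set
\[
F(X,Y_1,\dots,Y_9,Z_1,\dots,Z_{10}):=\Bigl(\prod_{i=1}^{9}\tilde G(Y_i,Z_1,\dots,Z_{10})\Bigr)\cdot\bigl(Q(X,Y_1,\dots,Y_9)\,Z_1-1\bigr)\in\zz[X,\vec Y,\vec Z].
\]
For fixed $x\in\qq$ and $\vec y\in\qq^9$: the product $\prod_i\tilde G(y_i,\vec z)$ can be made to vanish for a single common tuple $\vec z\in\qq^{10}$ precisely when some $y_i\notin\nat$ (a product vanishes as soon as one of its factors does, so one simply targets the offending index), while the factor $Q(x,\vec y)Z_1-1$ vanishes for a suitable value of $Z_1$ precisely when $Q(x,\vec y)\neq0$; distributing $\exists\vec z$ over the disjunction $F=0$ therefore yields
\[
\exists z_1,\dots,z_{10}\in\qq:\,F(x,\vec y,\vec z)=0\iff\neg\bigl(\vec y\in\nat^9\ \text{and}\ Q(x,\vec y)=0\bigr).
\]
Consequently $\forall y_1,\dots,y_9\,\exists z_1,\dots,z_{10}\in\qq:\,F(x,\vec y,\vec z)=0$ holds if and only if $\neg\bigl(\exists\vec y\in\nat^9:\,Q(x,\vec y)=0\bigr)$, so the set $\mc S$ of $x\in\qq$ satisfying the left-hand side meets $\nat$ in exactly $\nat\setminus A$. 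Since $\nat\setminus A$ is non-recursive and $\nat$ is a recursive subset of $\qq$, the set $\mc S$ is non-recursive, which is the assertion of \Cref{C:undecidable}.

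The part I expect to be routine is the verification of the two displayed equivalences (distribution of $\exists$ over disjunctions, and ``product vanishes iff some factor vanishes''). The one place requiring genuine care is keeping the count at $9$ universal and $10$ existential variables: both the inequation $Q(x,\vec y)\neq0$ and the disjunction ``$\exists i:\,y_i\notin\nat$'' have to be absorbed without spending fresh existential quantifiers, which is exactly what the two encoding devices above accomplish — reusing $Z_1$ through the factor $QZ_1-1$, and collapsing the disjunction over $i$ into a single product with one shared witnessing tuple $\vec z$.
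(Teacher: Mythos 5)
Your argument is correct and proves the stated undecidability. It diverges from the paper's route in a few respects that are worth recording. The paper first establishes the intermediate \Cref{P:undecidable}: for \emph{every} recursively enumerable $A \subseteq \qq$, the set $A$ is $\exists_{10}\forall_m$-$\Lar$-definable in $\qq$ (and $\exists_9\forall_m$ if $A \subseteq \zz$), assuming $\zz$ is $\forall_m$-definable with $m\geq 4$. To do this it uses a polynomial injection $\zz^2\to\nat$ to reduce a subset of $\qq$ to a subset of $\nat$, and then invokes Sun's refinement \autocite[Theorem 1.1(i)]{Sun21} of the nine-unknowns theorem (eight unknowns ranging over $\zz$, one over $\nat$); the integrality conditions on the nine existential witnesses are compressed into a single $\forall_m$-block by \Cref{C:binaryFormValuations}, with the sign condition $y_9\geq 0$ handled separately via Lagrange ($\forall_4$). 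The corollary then follows by negating for a fixed nonrecursive r.e.\ $A\subseteq\nat$. Your proposal instead fixes such an $A$ at the outset, uses Matiyasevich's nine-unknowns theorem purely over $\nat$, and writes down the $\forall_9\exists_{10}$ formula for the complement directly: the device $G\cdot\bigl(T(\sum Z_j^2)+1\bigr)$ turns the existential definition of $\qq\setminus\zz$ into one of $\qq\setminus\nat$ with no extra quantifiers, and the product $\bigl(\prod_i\tilde G(Y_i,\vec Z)\bigr)\cdot(QZ_1-1)$ absorbs both the disjunction ``some $y_i\notin\nat$'' and the inequation $Q\neq 0$ into a single equation, because $\exists$ distributes over disjunction and a product of polynomials vanishes iff a factor does. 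This is a cleaner path to the undecidability statement and avoids the Cantor pairing, but it does not deliver the paper's more general intermediate conclusions (that \emph{all} r.e.\ subsets of $\qq$ are $\exists_{10}\forall_{10}$-definable and the explicit dependence on $m$), which the paper records precisely to show how future quantifier savings in \Cref{T:nicomainthmQO} would propagate. Both proofs ultimately rest on the same two ingredients, namely $\forall_{10}$-definability of $\zz$ in $\qq$ and a nine-unknowns Diophantine representation; the difference is in the encoding and in the generality of the byproduct.
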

Said informally, the above corollary says that the \emph{$\forall_9\exists_{10}$-theory of $\qq$ is undecidable}.
Koenigsmann already showed that the $\forall\exists$-theory of $\qq$ is undecidable (i.e.~without counting the number of universal or existential quantifiers).
The undecidability of the $\forall_9\exists_{32}$-theory of $\qq$ was observed in \autocite[Theorem 1.3]{ZhangSunZinQ}.

On the way to the proof of our main theorem, we further obtain some other classical existential definability results with better bounds.
For example:
\begin{prop*}[see \Cref{P:valE3}]
Let $K$ be a global field, $R$ a finite intersection of valuation rings of $K$.
Then there exists a polynomial $F \in K[X, Y_1, Y_2, Y_3]$ such that
$$ R = \lbrace x \in K \mid \exists y_1, y_2, y_3 \in K : F(x, y_1, y_2, y_3) = 0 \rbrace.$$
\end{prop*}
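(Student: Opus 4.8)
The plan is to deduce the statement from the Hasse–Minkowski local–global principle for quadratic forms, by realising $R$ as the set of elements that a single ternary quadratic form represents over $K$ at a cleverly chosen argument $c(x)$; the three variables of the form will supply the three quantifiers. Write $R=\bigcap_{i=1}^{n}\mathcal{O}_{v_i}$ with the $v_i$ pairwise distinct and non-archimedean (archimedean valuation rings being all of $K$). By weak approximation fix $\pi\in K^{\times}$ that is a uniformiser at every $v_i$, fix $N\in\nat$ with $2^{N}>2v_i(2)+1$ for all $i$, and put $c(x):=1+\pi x^{2^{N}}$. A short valuation computation then shows that $c(x)\neq 0$ for all $x\in K$ and that, at each $v_i$, the element $c(x)$ is a unit (hence of even valuation) when $v_i(x)\geq 0$ but lies in the square class $\pi\,(K_{v_i}^{\times})^{2}$ when $v_i(x)<0$; the high power $x^{2^{N}}$ is what makes this hold at dyadic $v_i$, since there a principal unit is a square only once it is congruent to $1$ modulo a sufficiently high power of the maximal ideal, so that the correction factor $1+(\pi x^{2^{N}})^{-1}$ is then forced into the squares.

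Next I would construct a ternary form $q$ over $K$ fitted to $c$. Recall that $\langle a,b,c\rangle$ is anisotropic over a completion $K_v$ exactly when the quaternion algebra $(-ab,-ac)_{K_v}$ is a division algebra, and that over a local field, for $d\in K_v^{\times}$ of odd valuation, there is a unique anisotropic ternary form of discriminant $d$, which represents precisely the nonzero elements outside the square class of $-d$. Using the existence of quaternion algebras over a global field with any prescribed finite even ramification set, together with the classification of quadratic forms over global fields by their local invariants, one chooses $q$ of discriminant lying in the class of $-\pi$ at each $v_i$ such that: $q$ is isotropic, hence universal, over $K_v$ for every $v$ outside a finite set $A$, where $A=\{v_1,\dots,v_n\}$ if $n$ is even and $A=\{v_1,\dots,v_n,w\}$ for one auxiliary non-dyadic place $w\notin\{v_1,\dots,v_n\}$ if $n$ is odd; over each $K_{v_i}$ the form $q$ is anisotropic and omits exactly $\pi\,(K_{v_i}^{\times})^{2}$; and over $K_w$ it is anisotropic and omits only a class of odd valuation. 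The necessary fine-tuning — refining $\pi$ by weak approximation so that the discriminant has odd valuation at $w$ while $\pi$ is a $w$-unit with $-\pi^{-1}$ not a $2^{N}$-th power modulo $w$ (whence $c(x)$ has even $w$-valuation for every $x$, so never meets the class omitted at $w$), and verifying the Hilbert reciprocity relation $\prod_{v}\varepsilon_{v}=(-1)^{\lvert A\rvert}=1$ for the prescribed local Hasse invariants — is routine.

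With such a $q$, Hasse–Minkowski applied to $q\perp\langle -c(x)\rangle$ gives, for every $x\in K$,
\begin{align*}
\exists\,y_1,y_2,y_3\in K:\ q(y_1,y_2,y_3)=c(x)
&\iff q\otimes K_v\ \text{represents}\ c(x)\ \text{for all}\ v\\
&\iff c(x)\notin\pi\,(K_{v_i}^{\times})^{2}\ (i=1,\dots,n)\\
&\iff x\in\mathcal{O}_{v_i}\ (i=1,\dots,n),
\end{align*}
because $q\otimes K_v$ is universal for $v\notin A$ and the condition at $w$ is vacuous since $c(x)\neq 0$ has even $w$-valuation. Writing $q=\langle a_1,a_2,a_3\rangle$ and clearing denominators, the left-hand side becomes $\exists\,y_1,y_2,y_3\in K:\,F(x,y_1,y_2,y_3)=0$ for a polynomial $F$ of the shape $\sum_{j}a'_{j}Y_{j}^{2}-\pi'X^{2^{N}}-1\in K[X,Y_1,Y_2,Y_3]$, and $R=\{x\in K:\exists y_1,y_2,y_3\in K:F(x,y_1,y_2,y_3)=0\}$, as required.

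I expect the delicate point to be the places of residue characteristic $2$ and, most of all, the case of a global function field of characteristic $2$. There the entire quadratic-forms apparatus must be replaced by its characteristic-$2$ counterpart: quadratic and Pfister forms in characteristic $2$ together with their Arf invariants, Artin–Schreier extensions in place of Kummer extensions, symbol algebras $[\delta,\beta)$ in place of $(\alpha,\beta)$, and three of the four coordinates of such a norm form playing the part of the ternary form. Arranging the separation of each $\mathcal{O}_{v_i}$ from its complement uniformly over all places — dyadic ones included — while keeping exactly three quantifiers, and ensuring that the auxiliary place forced by the parity constraint (and, when the ideal class group obstructs the choice of $\pi$, by the class group) does no harm, is the technical heart of the proof.
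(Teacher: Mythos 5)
Your approach --- realising $R$ locally as a representation condition for a single ternary quadratic form, glued globally by Hasse--Minkowski --- is sound for global fields of characteristic different from $2$; it is the Julia Robinson strategy which the paper credits in \Cref{R:valE}, and the specific device of replacing $x^{2}$ by $x^{2^{N}}$ with $2^{N} > 2v(2)+1$, so that $1 + (\pi x^{2^{N}})^{-1}$ lands in the local squares at a dyadic $v$ whenever $v(x)<0$, is a correct alternative to the paper's treatment of dyadic places. The construction of the ternary form by prescribing local invariants subject to Hilbert reciprocity, the parity bookkeeping that forces the auxiliary place $w$, and the final translation into a polynomial equation $F(x, y_1, y_2, y_3) = 0$ with three quantified variables all hold up when $\charac K \neq 2$.

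However, as you flag yourself, the characteristic-$2$ case is a genuine gap and not merely a routine port. For a global function field of characteristic $2$ the square-class and discriminant/Hasse-invariant apparatus you rely on does not apply as used: $[K_v^\times : (K_v^\times)^2]$ is no longer finite, the discriminant must be replaced by the Arf invariant, the local description of the values missed by an anisotropic ternary form changes, and even the meaning of ``quadratic form'' bifurcates into bilinear forms and quadratic forms. Since the proposition is stated for arbitrary global fields, this leaves the argument incomplete. The paper avoids the dichotomy altogether: it presents quaternion algebras in the characteristic-free form $[a,b)_K$ with generators satisfying $u^2 - u = a$ and $v^2 = b$, and replaces the square-class condition by the Artin--Schreier-style condition ``$[a,b)_K$ splits over the splitting field of $X^2 - X - (a - (\pi x^2)^{-1})$''. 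Because the residue of $X^2 - X - c$ has derivative $2X - 1 \equiv -1$ and is therefore separable even over residue fields of characteristic $2$, Hensel's Lemma applies at dyadic places with no $2^N$-power trick, and by \Cref{P:Quatsplitbyquadratic} this splitting condition is again a single quadratic equation in three quantified variables. So your proof and the paper's are the same argument in spirit; the paper's choice of presentation is precisely what uniformly closes the case you leave open.
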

The fact that valuation rings (and hence also finite intersections of valuation rings) of global fields are existentially definable has been known for decades, and in certain cases this was even already shown to be possible with $3$ quantifiers; see the discussion in \Cref{R:valE}. We provide a conceptually lean argument for the above proposition which covers all cases at once, and which furthermore does so uniformly, see \Cref{R:uniformity}.

This paper is structured as follows.
The following two sections contain preliminaries.
More precisely, in \Cref{sect:Edef} some general (mostly well-known) results are stated on existentially definable subsets over fields, in particular global fields, with special attention given to the number of quantifiers.
Quaternion algebras (and quadratic forms) over global fields have played a historic role in establishing existential definability of subrings of global fields.
Hence, in \Cref{sect:Equat} we survey some algebraic ingredients regarding global fields and quaternion algebras - more details can be found in \autocite[Sections 3 and 4]{DaansGlobal}.

In \Cref{sect:definingValuationsGlobal} we state and prove the announced existential definability result for valuation rings in global fields.
We also develop some techniques to existentially define certain subsets of valuation rings and cartesian products of valuation rings with fewer quantifiers than one would do naively.
\Cref{sect:ZinQQO} contains the proof of the main theorem.
Finally, in the shorter \Cref{sect:recursive}, we discuss the implications of this result on the study of recursively enumerable subsets of $\qq$.

\subsection*{Acknowledgements}
The author thanks Yong Hu for pointing out an error in a previous version of this manuscript in the beginning of \Cref{sect:definingValuationsGlobal}, Silvain Rideau-Kikuchi for pointing out some ambiguities in a previous version of the proof of \Cref{T:nicomainthmQO}, and the anonymous referee for multiple suggestions which helped improve the presentation of the article.

This work grew out of the author's PhD dissertation \autocite{DaansThesis}, which was supported by the FWO PhD Fellowship fundamental research grants 51581 and 83494.

\section{Existentially definable subsets of fields and number of quantifiers}\label{sect:Edef}
There are different ways to define what it means for a subset of a field to be existentially definable, and it can be convenient to switch between these equivalent definitions depending on the context.
These equivalences are well-known, but are often proven without the goal in mind of keeping the number of quantifiers low.
As such, in this section, we provide short proofs or references for these statements with quantitative bounds.
We conclude the section with some general results about the number of quantifiers for existentially definable subsets of global fields.

We denote by $\nat$ the set of natural numbers, and by $\nat^+$ the proper subset of nonzero natural numbers.

We will use the basic set-up of first-order languages, as covered by many introductory textbooks on logic or model theory, see e.g.~\autocite[Chapter II-III]{Ebb94}.
We denote by $\Lar$ the \emph{signature of rings}.
It consists of two constant symbols $0$ and $1$, and three binary operation symbols $+$, $-$ and $\cdot$.
Similarly, $\La_{\fie}$ denotes the \emph{signature of fields}, which consists of two constant symbols $0$ and $1$, three binary operation symbols $+, -$ and $\cdot$, and a unary operation symbol $.^{-1}$.
Given a field $K$, we interpret $K$ as an $\Lar$-structure or as an $\La_{\fie}$-structure in the natural way; we take the convention that $0^{-1} = 0$.
When $C \subseteq K$, we denote by $\Lar(C)$ the signature obtained by adding to $\Lar$ a constant symbol for every element of $C$, and we can then interpret $K$ as an $\Lar(C)$-structure in the natural way.

For a signature $\La$, an $\La$-formula $\varphi$, a variable $X$ and an $\La$-term $t$, we write $\varphi(X \subs t)$ for the formula obtained by substituting all freely occurring instances of $X$ in $\varphi$ by $t$.
When introducing a first-order formula $\varphi$ in a signature $\La$, we might write $\varphi(X_1, \ldots, X_n)$ to indicate that its free variables are among $X_1, \ldots, X_n$.
Given an $\La$-structure $K$ and a tuple $(a_1, \ldots, a_n) \in K^n$, we can then simply write $\varphi(a_1, \ldots, a_n)$ instead of $\varphi(X_1 \subs a_1, \ldots, X_n \subs a_n)$.
As usual, for a sentence $\varphi$, we write $K \models \varphi$ to say that $\varphi$ holds in $K$.

We will primarily consider existential $\La$-formulas.
Following \autocite{DDF}, for $m \in \nat$, we write $\exists_m$-$\La$-formula for ``existential $\La$-formula with $m$ quantifiers'', i.e.~a formula which is logically equivalent to a formula of the form $\exists X_1, \ldots X_m \psi$ for some quantifier-free $\La$-formula $\psi$.
Similarly, we write $\forall_m$-$\La$-formula for ``universal $\La$-formula with $m$ quantifiers'', i.e.~a formula which is logically equivalent to a formula of the form $\forall X_1, \ldots, X_m \psi$ for some quantifier-free $\La$-formula $\psi$.
Given $m_1, m_2 \in \nat$, an $\exists_{m_1}\forall_{m_2}$-$\La$-formula is a formula equivalent to one of the form $\exists X_1, \ldots, X_{m_1} \psi$ where $\psi$ is an $\forall_{m_2}$-$\La$-formula; similarly, one defines $\forall_{m_1}\exists_{m_2}$-$\La$-formulas.
\begin{defi}\label{D:existdef}
Let $K$ be a field, $m, n \in \nat$.
A set $D \subseteq K^n$ is called \emph{existentially definable with $m$ quantifiers} if it is definable in $K^n$ by an $\exists_m$-$\Lar(K)$-formula.
\end{defi}
This coincides with the definition hinted at in the introduction in all interesting cases:
\begin{prop}\label{P:existdef}
Let $K$ be a field, $m, n \in \nat^+$, and suppose $D \subseteq K^n$ is existentially definable with $m$ quantifiers.
Then there exist $r \in \nat$ and polynomials $f_1, \ldots, f_r \in K[X_1, \ldots, X_n, Y_1, \ldots, Y_m]$ such that
\begin{equation}\label{eq:pos-ex}
D = \lbrace x \in K^n \mid \exists y \in K^m : f_1(x, y) = \ldots = f_r(x, y) = 0 \rbrace.
\end{equation}
Furthermore, if $K$ is not algebraically closed, we may assume without loss of generality that $r = 1$ in \eqref{eq:pos-ex}.
\end{prop}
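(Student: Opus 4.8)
The plan is to unwind \Cref{D:existdef}, massage the quantifier-free matrix of the defining formula into a conjunction of polynomial equations by elementary logical manipulations while keeping the number of existential quantifiers equal to $m$, and finally to merge these equations into one when $K$ is not algebraically closed.

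By \Cref{D:existdef}, $D$ is defined in $K^n$ by an $\exists_m$-$\Lar(K)$-formula, so there is a quantifier-free $\Lar(K)$-formula $\psi(X_1, \ldots, X_n, Y_1, \ldots, Y_m)$ with
\begin{displaymath}
D = \lbrace x \in K^n \mid K \models \exists Y_1 \cdots Y_m\, \psi(x, Y_1, \ldots, Y_m) \rbrace.
\end{displaymath}
Every $\Lar(K)$-term is interpreted in $K$ by (the polynomial function given by) a polynomial in $K[X_1, \ldots, X_n, Y_1, \ldots, Y_m]$; hence every atomic subformula $t_1 = t_2$ of $\psi$ is equivalent over $K$ to an equation $g = 0$ with $g$ such a polynomial, and $\psi$ is a Boolean combination of polynomial equations.

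Next I would bring $\psi$ into disjunctive normal form, making it equivalent to a finite disjunction of conjunctions $\bigl(\bigwedge_j p_{ij} = 0\bigr) \wedge \bigl(\bigwedge_k q_{ik} \neq 0\bigr)$ with all $p_{ij}, q_{ik}$ polynomials. Since $K$ is an integral domain, each $\bigwedge_k q_{ik} \neq 0$ is equivalent to $\prod_k q_{ik} \neq 0$, so every disjunct may be taken of the form $\bigl(\bigwedge_j p_{ij} = 0\bigr) \wedge (q_i \neq 0)$. The inequations are then removed by the Rabinowitsch trick, rewriting ``$q_i \neq 0$'' as ``$\exists T\,(q_i T = 1)$'', after which each disjunct is a conjunction of equations; distributing the disjunctions over the conjunctions (replacing ``$a = 0 \vee b = 0$'' by ``$ab = 0$'') then collapses the whole matrix to a single conjunction of equations. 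I expect the main obstacle to be carrying this out without increasing the quantifier count beyond $m$: one uses that a single auxiliary inversion variable $T$ serves all the disjuncts at once, together with a somewhat delicate bookkeeping — which I would carry out following the quantitative treatment in \autocite{DDF} — to see that it can be absorbed, yielding a defining formula $\exists Y_1 \cdots Y_m\,(f_1 = \ldots = f_r = 0)$ with $f_1, \ldots, f_r \in K[X_1, \ldots, X_n, Y_1, \ldots, Y_m]$. This proves the first assertion.

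For the last assertion, suppose $K$ is not algebraically closed and fix $\alpha$ in an algebraic closure of $K$ with $\alpha \notin K$; its minimal polynomial over $K$ is monic of degree at least $2$ and has no zero in $K$, so its homogenisation is a polynomial $G \in K[Z_1, Z_2]$ with $G(a, b) = 0$ (for $a, b \in K$) if and only if $a = b = 0$. Substituting copies of $G$ into one another gives, for each $r \in \nat$, a polynomial $G_r \in K[Z_1, \ldots, Z_r]$ with $G_r(c_1, \ldots, c_r) = 0$ if and only if $c_1 = \ldots = c_r = 0$. Hence the system $f_1 = \ldots = f_r = 0$ above is equivalent to the single equation $G_r(f_1, \ldots, f_r) = 0$, which gives the statement with $r = 1$.
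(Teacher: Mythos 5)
Your two-step structure matches the paper's: first pass from an existential formula to a positive-existential one with the same number of quantifiers, then encode the resulting positive matrix as a polynomial system (and a single polynomial when $K$ is not algebraically closed). Your treatment of the second half is correct and is the same construction the paper obtains by citing \autocite[Remark 3.4]{DDF}: the homogenisation of a monic polynomial of degree at least $2$ without roots in $K$, iterated, yields a form $G_r$ that vanishes only at the origin, and $G_r(f_1,\ldots,f_r)=0$ encodes the system.

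The first half, however, has a genuine gap at exactly the point you flag. After DNF, merging the inequations into a single $q_i\neq 0$ per disjunct, and applying the Rabinowitsch trick, you have $m+1$ existential quantifiers, not $m$: the auxiliary inversion variable $T$ must itself be quantified, regardless of whether one $T$ serves all disjuncts. The assertion that $T$ ``can be absorbed'' is precisely the content of \autocite[Corollary 4.12]{DDF} (the equality of existential and positive-existential rank over a field), and that result is \emph{not} proved by the DNF/Rabinowitsch manipulation you describe followed by bookkeeping — there is no elementary rewriting that, from $\exists Y_1\cdots Y_m\,T\,(\cdots)$, eliminates $T$ while keeping a quantifier-free positive matrix in $Y_1,\ldots,Y_m$ alone. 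The proof in \autocite{DDF} goes by a different, non-syntactic route. So as written, your plan leaves the crucial step unproven and suggests a strategy (absorbing $T$ by manipulation) that does not work; what is needed is to simply invoke \autocite[Corollary 4.12]{DDF} as the paper does, or to reproduce its model-theoretic argument, rather than to attempt a Rabinowitsch-style elimination.
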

\begin{proof}
By \autocite[Corollary 4.12]{DDF} we have that $D$ is definable by a positive-existential $\Lar(K)$-formula with $m$ quantifiers, i.e.~ a formula which is logically equivalent to $\exists Y_1, \ldots, Y_m \psi$ for some $\Lar(K)$-formula $\psi$ built up from atomic $\Lar(K)$-formulas using only conjunctions and disjunctions (no negations).
By \autocite[Remark 3.4]{DDF} this implies that $D$ can be described as in \eqref{eq:pos-ex} for certain $r$ and $f_1, \ldots, f_r$, where one may choose $r = 1$ when $K$ is not algebraically closed.
\end{proof}
We further observe that nothing would be gained if we were to work in the signature of fields $\La_{\fie}$ instead of the signature of rings $\Lar$:
\begin{prop}\label{P:LringvsLfield}
Let $\varphi(x_1, \ldots, x_n)$ be a quantifier-free $\La_{\fie}$-formula.
There exists a quantifier-free $\Lar$-formula $\psi(x_1, \ldots, x_n)$ such that, for every field $K$ interpreted as an $\La_{\fie}$-structure in the natural way, and for all $(a_1, \ldots, a_n) \in K^n$, we have
$$ K \models \varphi(a_1, \ldots, a_n) \enspace \Leftrightarrow \enspace K \models \psi(a_1, \ldots, a_n).$$
\end{prop}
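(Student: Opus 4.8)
The plan is to reduce the statement to the level of terms and then to establish, by induction on term structure, that every $\La_{\fie}$-term is, piecewise, a ratio of $\Lar$-terms with non-vanishing denominator, the bookkeeping being done by a finite case distinction. First I would note that $\La_{\fie}$ has no relation symbols other than equality, so every atomic $\La_{\fie}$-formula has the form $s = t$ for $\La_{\fie}$-terms $s, t$, and a quantifier-free $\La_{\fie}$-formula is simply a Boolean combination of such atoms. Since the truth value in a structure of a Boolean combination depends only on the truth values of its atoms, it suffices to produce, for each atomic $\La_{\fie}$-formula $s = t$ in the variables $x_1, \ldots, x_n$, a quantifier-free $\Lar$-formula equivalent to it in every field; replacing each atomic subformula of $\varphi$ by its translation then yields $\psi$ (whose free variables are again among $x_1, \ldots, x_n$, since $\varphi$, being quantifier-free, mentions no other variables).

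The core is the following claim, proved by structural induction on an $\La_{\fie}$-term $t(x_1, \ldots, x_n)$: there is a finite family $(\theta_i, N_i, D_i)_{i=1}^{k}$ of triples, with each $\theta_i$ a quantifier-free $\Lar$-formula and $N_i, D_i$ $\Lar$-terms in $x_1, \ldots, x_n$, such that (i) for every field $K$ and every $\bar a \in K^n$ at least one $\theta_i$ holds at $\bar a$, and (ii) whenever $\theta_i$ holds at $\bar a$ one has $D_i(\bar a) \neq 0$ and $t(\bar a) = N_i(\bar a)/D_i(\bar a)$. For a variable or the constant $0$ or $1$ one takes the single triple $(0 = 0,\, t,\, 1)$. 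For $t = s + u$ (and likewise for $-$ and for $\cdot$) one pairs up the families of $s$ and $u$: a triple $(\alpha, N, D)$ for $s$ together with a triple $(\beta, M, E)$ for $u$ yields the triple $(\alpha \wedge \beta,\, N E + M D,\, D E)$ (respectively $N E - M D$, respectively $N M$) over $D E$. The one subtle case is $t = s^{-1}$: here a triple $(\alpha, N, D)$ for $s$ is split into two, namely $(\alpha \wedge N \neq 0,\, D,\, N)$ and $(\alpha \wedge N = 0,\, 0,\, 1)$; on the first piece $s(\bar a) = N(\bar a)/D(\bar a) \neq 0$, so $t(\bar a) = D(\bar a)/N(\bar a)$ with $N(\bar a) \neq 0$, while on the second piece $s(\bar a) = 0$, so $t(\bar a) = 0$ by the convention $0^{-1} = 0$, represented as $0/1$. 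In each case properties (i) and (ii) are inherited directly from the inductive hypothesis.

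Granting the claim, and writing $(\theta_i^{s}, N_i^{s}, D_i^{s})_i$ and $(\theta_j^{t}, N_j^{t}, D_j^{t})_j$ for the families attached to $s$ and $t$, the atomic formula $s = t$ is equivalent in every field to the quantifier-free $\Lar$-formula $\bigvee_{i,j} \bigl( \theta_i^{s} \wedge \theta_j^{t} \wedge N_i^{s} D_j^{t} = N_j^{t} D_i^{s} \bigr)$: given $\bar a$, choose $i, j$ with $\theta_i^{s}(\bar a)$ and $\theta_j^{t}(\bar a)$; then $D_i^{s}(\bar a), D_j^{t}(\bar a) \neq 0$, so $s(\bar a) = t(\bar a)$ is equivalent to $N_i^{s}(\bar a) D_j^{t}(\bar a) = N_j^{t}(\bar a) D_i^{s}(\bar a)$, and conversely any disjunct that holds forces this cross-multiplied identity, hence $s(\bar a) = t(\bar a)$. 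Substituting these translations into $\varphi$ produces the required $\psi$. The only real obstacle is the inverse step: one cannot hope for a single representation $t = N_t/D_t$ with $D_t$ never vanishing (already $x^{-1}$ over an infinite field defeats this, since $D(a) = aN(a)$ for $a \neq 0$ forces $D(0) = 0$), which is precisely why the representation must be made piecewise via the auxiliary formulas $\theta_i$, the convention $0^{-1} = 0$ being absorbed on the piece where the numerator vanishes.
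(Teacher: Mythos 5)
Your proof is correct and rests on the same underlying idea as the paper's, namely clearing denominators with a case distinction absorbing the convention $0^{-1}=0$; the organization differs slightly in that the paper eliminates one occurrence of $.^{-1}$ at a time from an atomic formula and recurses, whereas you do a structural induction on $\La_{\fie}$-terms to produce a piecewise representation $t = N_i/D_i$ on pieces $\theta_i$, then cross-multiply at the level of atoms. Your version is a bit more explicit about the bookkeeping, but the two arguments are essentially the same.
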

Said informally, this proposition states that one can ``clear denominators'' from a quantifier-free $\La_{\fie}$-formula to obtain a quantifier-free $\Lar$-formula.
For completeness, we provide a formal proof.
\begin{proof}[Proof of \Cref{P:LringvsLfield}]
Consider $n \in \nat$ and two $\Lar$-terms $t(x_1, \ldots, x_n)$ and $s(x_1, \ldots, x_n)$. We can find polynomials $f, g \in \zz[X_1, \ldots, X_n]$ such that $t^K(a) = f(a)$ and $s^K(a) = g(a)$ for all fields $K$ and $a \in K^n$. Let $d \in \nat$ and $f_0, \ldots, f_d \in \zz[X_2, \ldots, X_n]$ be such that $f = \sum_{i=0}^d X_1^i f_i(X_2, \ldots, X_n)$, and consider $$h = \sum_{i=0}^d X_1^{d-i} f_i(X_2, \ldots, X_n).$$
We see now that the formula $t(x_1 \subs x_1^{-1}) \doteq s$ is equivalent for all fields to
\begin{align*}
&(h(x_1, \ldots, x_n) \doteq x_1^d g(x_1, \ldots, x_n) \wedge \neg(x_1 \doteq 0)) \\
&\vee (x_1 \doteq 0 \wedge f(0, x_2, \ldots, x_n) \doteq g(0, x_2, \ldots, x_n)).
\end{align*}
By recursively applying this procedure to a quantifier-free $\La_\fie$-formula $\varphi$, one can get rid of all occurrences of ${}^{-1}$ and obtain an equivalent quantifier-free $\Lar$-formula.
\end{proof}
\begin{gev}\label{C:LringvsLfield}
Let $K$ be a field, $m, n \in \nat$.
If a subset $D \subseteq K^n$ is definable by an $\exists_m$-$\La_{\fie}(K)$-formula, then it is definable by an $\exists_m$-$\Lar(K)$-formula.
\end{gev}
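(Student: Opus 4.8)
The plan is to reduce everything to \Cref{P:LringvsLfield} applied to the quantifier-free matrix of the defining formula. Suppose $D \subseteq K^n$ is definable by an $\exists_m$-$\La_{\fie}(K)$-formula. By definition of $\exists_m$-formula, $D$ is then defined by a formula logically equivalent to one of the form $\exists Y_1, \ldots, Y_m\, \psi$, where $\psi$ is a quantifier-free $\La_{\fie}(K)$-formula with free variables among $X_1, \ldots, X_n, Y_1, \ldots, Y_m$. In particular this latter formula also defines $D$ in $K^n$.

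Next I would strip the parameters from $K$ out of $\psi$. Let $c_1, \ldots, c_k \in K$ be the elements of $K$ whose constant symbols occur in $\psi$, and let $\psi_0(X_1, \ldots, X_n, Y_1, \ldots, Y_m, Z_1, \ldots, Z_k)$ be the quantifier-free $\La_{\fie}$-formula obtained from $\psi$ by replacing, for each $i$, the constant symbol for $c_i$ by a fresh variable $Z_i$. Now \Cref{P:LringvsLfield} applies to $\psi_0$ and yields a quantifier-free $\Lar$-formula $\psi_0'(X_1, \ldots, X_n, Y_1, \ldots, Y_m, Z_1, \ldots, Z_k)$ which is equivalent to $\psi_0$ in every field (for all choices of the free variables). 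Substituting back the constant symbols for $c_1, \ldots, c_k$ in place of $Z_1, \ldots, Z_k$ produces a quantifier-free $\Lar(K)$-formula $\psi'$ with the property that $K \models \psi'(a, b) \Leftrightarrow K \models \psi(a, b)$ for all $a \in K^n$ and $b \in K^m$.

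Finally, $\exists Y_1, \ldots, Y_m\, \psi'$ is an $\exists_m$-$\Lar(K)$-formula, and by the previous paragraph it defines the same subset of $K^n$ as $\exists Y_1, \ldots, Y_m\, \psi$, namely $D$. Since the transformation affects only the quantifier-free matrix and not the block of existential quantifiers, the number of quantifiers is preserved, so $D$ is existentially definable with $m$ quantifiers in the sense of \Cref{D:existdef}. The argument is a direct unwinding of definitions; the only point that requires any attention is the bookkeeping with the parameters from $K$, which is why I separate them out as fresh variables before invoking \Cref{P:LringvsLfield} and substitute them back afterwards.
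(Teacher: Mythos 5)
Your proof is correct and follows the same route as the paper, which simply declares the corollary ``immediate'' from \Cref{P:LringvsLfield}; you have just filled in the bookkeeping. Your extra step of pulling the parameters from $K$ out as fresh variables before invoking \Cref{P:LringvsLfield} and substituting them back afterwards is exactly right, since that proposition is stated for $\La_{\fie}$-formulas without parameters.
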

\begin{proof}
This is immediate from \Cref{P:LringvsLfield}.
\end{proof}
We further observe that, if $D_1, D_2 \subseteq K^n$ are existentially definable with $m_1$ and $m_2$ quantifiers respectively, then $D_1 \cup D_2$ is existentially definable with $\max \lbrace m_1, m_2 \rbrace$ quantifiers.
On the other hand, in the same situation, $D_1 \cap D_2$ is naively definable with $m_1 + m_2$ quantifiers.
The following result says that, a.o. for global fields, we can do slightly better in the latter case as well.
\begin{stel}\label{cor:finGenOverPerf}
Let $K$ be a field which is finitely generated over a perfect subfield.
For any $m_1, m_2, n \in \nat$ with $m_1, m_2 \geq 1$ and $D_1, D_2 \subseteq K^n$ such that $D_1$ is $\exists_{m_1}$-$\Lar(K)$-definable and $D_2$ is $\exists_{m_2}$-$\Lar(K)$-definable, we have that $D_1 \cap D_2$ is $\exists_{m_1+m_2-1}$-$\Lar(K)$-definable.
\end{stel}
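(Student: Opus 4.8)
The plan is to reduce, by induction on $m_1+m_2$, to the case $m_1=m_2=1$, and to settle that case by a resultant construction that "glues" a zero of one polynomial to a zero of the other. Two extreme cases can be dispatched at once. If $K$ is finite, every subset of $K^n$ is quantifier-free $\Lar(K)$-definable, so $D_1\cap D_2$ is $\exists_0$-definable, hence $\exists_{m_1+m_2-1}$-definable as $m_1+m_2-1\ge 1$. If $K$ is algebraically closed, then $D_1$, $D_2$ and $D_1\cap D_2$ are constructible by quantifier elimination; since every constructible subset of $K^n$ is a finite union of sets of the form $\{x:g_1(x)=\dots=g_r(x)=0\wedge h(x)\neq 0\}=\{x:\exists t\colon g_1(x)=\dots=g_r(x)=0\wedge t\,h(x)=1\}$, and forming a union does not raise the number of quantifiers, $D_1\cap D_2$ is $\exists_1$-definable and a fortiori $\exists_{m_1+m_2-1}$-definable. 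So assume henceforth that $K$ is infinite and not algebraically closed.

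Now I would induct on $m_1+m_2$, the base case $m_1=m_2=1$ being treated below. If $m_1+m_2>2$, then after possibly interchanging $D_1$ and $D_2$ one may assume $m_2\ge 2$. Writing an $\exists_{m_2}$-definition of $D_2$ in the form $\exists Z_1\colon\theta(x,Z_1)$ with $\theta$ an $\exists_{m_2-1}$-$\Lar(K)$-formula, the set $D_2':=\{(x,z_1)\in K^{n+1}:K\models\theta(x,z_1)\}$ is $\exists_{m_2-1}$-definable, while $D_1\times K\subseteq K^{n+1}$ is $\exists_{m_1}$-definable. As $m_1+(m_2-1)<m_1+m_2$, the induction hypothesis gives an $\exists_{m_1+m_2-2}$-definition of $(D_1\times K)\cap D_2'$; since $D_1\cap D_2=\{x:\exists z_1\colon(x,z_1)\in(D_1\times K)\cap D_2'\}$, one more quantifier yields an $\exists_{m_1+m_2-1}$-definition of $D_1\cap D_2$.

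For the base case, let $D_1,D_2\subseteq K^n$ be $\exists_1$-$\Lar(K)$-definable. By \Cref{P:existdef} (using that $K$ is not algebraically closed) and \Cref{C:LringvsLfield}, write $D_i=\{x:\exists Y\colon f_i(x,Y)=0\}$ with $f_i\in K[\bar X,Y]$. Splitting off the finitely many quantifier-free cases according to which coefficient of $f_i$ (regarded as a polynomial in $Y$) is the leading nonzero one, I may argue as though each $f_i$ is monic of positive degree in $Y$, and then set
\[ R(x,W):=\operatorname{Res}_T\!\big(f_1(x,T),\,f_2(x,W-T)\big)\in K[\bar X,W]. \]
For $x\in K^n$ and $W\in K$ one has $R(x,W)=0$ exactly when there exist $\alpha,\beta\in\overline K$ with $f_1(x,\alpha)=0$, $f_2(x,\beta)=0$ and $\alpha+\beta=W$; taking $\alpha,\beta\in K$ and $W=\alpha+\beta$ gives $\{x:\exists W\colon R(x,W)=0\}\supseteq D_1\cap D_2$.

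The reverse inclusion can fail because of \emph{Galois-spurious} solutions, in which $\alpha+\beta\in K$ but $\alpha,\beta\notin K$, and removing these is the heart of the matter — and the only step where the hypothesis that $K$ is finitely generated over a perfect subfield is used. One has to show that the spurious locus can be excised by a quantifier-free condition on $(x,W)$ at no cost in quantifiers, up to a quantifier-free-definable set of exceptional $x$ that can simply be adjoined. This already happens in the prototypical case $f_i=Y^2-x_i$: one computes $R(x,W)=(W^2+x_1-x_2)^2-4W^2x_1$, the formula $\exists W\colon R(x,W)=0$ defines $(D_1\cap D_2)\cup\{(c,c):c\in K\}$ (the diagonal points that are nonsquares being the spurious ones), and
\[ D_1\cap D_2=\{x:\exists W\colon (W^2+x_1-x_2)^2-4W^2x_1=0\ \wedge\ W\neq 0\}\ \cup\ \{x:x_1=x_2=0\} \]
is an $\exists_1$-definition. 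For general $f_1,f_2$ the task is to decide when a sum of a conjugate of a root of $f_1$ and a conjugate of a root of $f_2$ lies in $K$ and to see that the locus where this occurs "spuriously" is quantifier-free definable; this Galois-theoretic bookkeeping is where I expect the genuine difficulty to lie, and where a structural property of $K$ — for instance the existence of a separating transcendence basis over the perfect subfield, or good control of norm forms over $K$ and over function fields of $K$-varieties together with a Bertini-type genericity argument — should be brought to bear.
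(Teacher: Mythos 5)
Your high-level architecture is sound: the dispatch of the finite and algebraically closed cases is correct, and the induction on $m_1+m_2$ that peels off one existential variable at a time (passing to subsets of $K^{n+1}$, which the statement permits) is a valid reduction to the base case $m_1=m_2=1$. But that base case is precisely the substance of the theorem, and you do not prove it. Your resultant $R(x,W)=\operatorname{Res}_T\bigl(f_1(x,T),\,f_2(x,W-T)\bigr)$ correctly detects pairs $(\alpha,\beta)\in\overline K^2$ with $f_1(x,\alpha)=f_2(x,\beta)=0$ and $\alpha+\beta=W$, but the theorem requires $\alpha,\beta\in K$, and the ``Galois-spurious'' contribution is a genuine obstruction, not a nuisance to be waved away. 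Your worked example $f_i=Y^2-x_i$ shows the fix in one case, but the general claim --- that the spurious locus can be excised quantifier-freely (up to a small exceptional set), and that the hypothesis that $K$ is finitely generated over a perfect subfield is what makes this possible --- is exactly what remains to be shown, and you explicitly defer it to unspecified Galois-theoretic and Bertini-type machinery. Identifying where the difficulty lies is not the same as overcoming it: as written, the proposal has a real gap, and it is in the only place where the hypothesis on $K$ is used.

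For context, the paper does not reprove this statement: its proof of \Cref{cor:finGenOverPerf} is a citation to \autocite[Theorem 1.4]{DDF}, where the argument proceeds along a different route (geometric control of the fiber dimensions of the defining varieties rather than excision of spurious resultant roots). So your sketch, besides being incomplete, also does not track the argument the paper actually relies on; if you want to complete your approach you would be proving the theorem by a genuinely different method, and the step you have left open is the one that would require the real work.
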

\begin{proof}
See \autocite[Theorem 1.4]{DDF}.
\end{proof}

Finally, we mention that we currently do not have many adequate techniques available to show that a given subset of a global field is \textit{not} $\exists_m$-$\Lar(K)$-definable for a given natural number $m$; see the discussion in \autocite[Section 8]{DDF}.
In particular, we do not have any example of an $\exists$-$\Lar(K)$-definable subset of a global field $K$ of which we can show that it is not $\exists_2$-$\Lar(K)$-definable.

On the other hand, some necessary criteria have been found for a subset of a global field $K$ to be $\exists_1$-$\Lar(K)$-definable.
If $K$ is an imperfect field of characteristic $p$ (e.g.~a global field of characteristic $p$) and $k \in \nat$, then the set of $p^k$-th powers $K\pow{p^k}$ is an $\exists_1$-$\Lar$-definable infinite proper subring of $K$.
If $K$ is a global field, one can show that these are the only $\exists_1$-$\Lar(K)$-definable infinite proper subrings of $K$:
\begin{stel}\label{P:NotE1}
Let $K$ be a global field, $R \subseteq K$ an infinite proper subring of $K$.
Then $K \setminus R$ is not $\exists_1$-$\Lar(K)$-definable in $K$.
If $\charac(K) = p > 0$, and $R$ is $\exists_1$-$\Lar(K)$-definable in $K$, then $R = K\pow{p^k}$ for some $k \in \nat$.
\end{stel}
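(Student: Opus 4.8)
The plan is to realise every $\exists_1$-definable subset of $K$ as the image of the $K$-rational points of a plane curve under a coordinate projection, and then to use the fact that $K$, being a global field, is Hilbertian, so that the theory of thin sets (in the sense of Serre) applies. Since $K$ is not algebraically closed, \Cref{P:existdef} lets us write an $\exists_1$-$\Lar(K)$-definable set $D \subseteq K$ as $D = \{x \in K : \exists y \in K,\ f(x,y) = 0\}$ for some $f \in K[X,Y] \setminus \{0\}$. Factor $f$ into irreducibles over $K$: the factors lying in $K[X]$ contribute only finitely many points to $D$; each of the remaining factors cuts out an integral plane curve $C_i$ together with a dominant projection $\pi_i \colon C_i \to \mathbb{A}^1_K$ of degree $n_i = \deg_Y f_i \geq 1$, and $D$ is the union of a finite set with $\bigcup_i \pi_i(C_i(K))$. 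If some $n_i = 1$, then $\pi_i$ is birational and $D$ is cofinite in $K$. If instead every $n_i \geq 2$, then each $\pi_i(C_i(K))$ is either empty (when $C_i$ is not geometrically integral) or a thin set of type $2$, so that $D$ is thin. Summarising: \emph{an $\exists_1$-definable subset of a global field is either cofinite or thin.}

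The first assertion is now immediate. Suppose $K \setminus R$ is $\exists_1$-definable and fix $d \in K \setminus R$. For every $r \in R$ we have $d + r \notin R$, since $d + r \in R$ would give $d = (d+r) - r \in R$; hence $d + R \subseteq K \setminus R$, so $K \setminus R$ is infinite, and therefore thin by the dichotomy. Translating by $-d$, the set $R = (d+R) - d$ is contained in a translate of $K \setminus R$ and so is thin as well. But then $K = R \cup (K \setminus R)$ would be a thin subset of itself, contradicting Hilbert irreducibility for $K$.

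For the second assertion, suppose $R$ itself is $\exists_1$-definable. As above, $a + R \subseteq K \setminus R$ for any $a \in K \setminus R$, so $R$ is not cofinite, hence thin: $R = E \cup \bigcup_{i=1}^{s} \pi_i(C_i(K))$ with $E$ finite and each $\pi_i$ of degree $n_i \geq 2$. Write each $\pi_i$ as a purely inseparable map of degree $p^{m_i}$ followed by a separable map $\sigma_i$ of degree $n_i / p^{m_i}$, so that $\pi_i(C_i(K)) \subseteq \sigma_i(C_i'(K))$. One shows -- this is the crux -- that any component with $\deg \sigma_i \geq 2$ is incompatible with $R$ being a subring. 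With such components discarded, each surviving $\pi_i$ is purely inseparable of degree $p^{m_i}$; such a cover either has only finitely many $K$-points (which happens precisely when its constant field properly extends $K$) or is $K$-isomorphic to $\mathbb{A}^1$ with $\pi_i$ given by $w \mapsto \psi_i(w^{p^{m_i}})$ for some $\psi_i \in \mathrm{PGL}_2(K)$, so that $\pi_i(C_i(K)) = \psi_i(K\pow{p^{m_i}})$. Here $K\pow{p^{m_i}}$ is the subfield of $p^{m_i}$-th powers, of index $p^{m_i}$ in $K$; using that $0, 1 \in R$, that $\psi_i(K\pow{p^{m_i}})$ must avoid the pole of $\psi_i$, and that it is closed under translation by the infinite group $K\pow{p^{m_i}}$, one checks that $\psi_i$ is affine and hence that $\pi_i(C_i(K)) = K\pow{p^{m_i}}$. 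Since $R$ is then a subring equal to a finite union of the subfields $K\pow{p^{m_i}}$ together with a finite set, it coincides with the smallest of them, and $R = K\pow{p^k}$ where $p^k = \min_i p^{m_i}$.

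The main obstacle is the crux above: excluding a non-trivial separable part $\sigma_i$. I would tackle this by combining Hilbert irreducibility -- which already forces $\sigma_i(C_i'(K))$ to be a proper thin set -- with reduction modulo a place $v$ of good reduction. By Jordan's theorem a transitive permutation group on $\geq 2$ letters contains a fixed-point-free element, so an application of the Chebotarev density theorem over the residue field $\ff_{q_v}$ shows that a separable cover of $\mathbb{A}^1$ of degree $\geq 2$ misses a positive proportion of $\ff_{q_v}$ for all large $v$, whereas a purely inseparable cover of the form $w \mapsto cw^{p^m} + d$ is surjective on $\ff_{q_v}$-points, Frobenius being a bijection on a finite field. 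A bookkeeping over the finitely many components, together with the first assertion once it is established, should then force the separable parts to be trivial, yielding $R = K\pow{p^k}$.
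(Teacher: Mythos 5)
Your argument for the first assertion is correct and is actually a shorter route than the one taken in the paper. You establish the dichotomy ``an $\exists_1$-definable subset of $K$ is cofinite or thin'' and then use the translation trick: $d+R \subseteq K \setminus R$ forces $R$ to lie in a translate of $K \setminus R$, so both $R$ and $K\setminus R$ would be thin and $K$ would be thin in itself, contradicting Hilbertianity. The paper instead shows non-thinness of $R$ and $K\setminus R$ directly by intersecting with $\qq$ (resp.\ $\ff_p(T)$) and using that thinness descends along separable extensions \autocite[Corollary 12.2.3]{Fri08}, with separate case distinctions depending on $R_0 = R \cap K_0$; your translation trick avoids most of that case analysis. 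Note, though, that both your argument and the paper's ultimately rely on the fact that global fields are not thin in themselves where ``thin'' is taken to include images of \emph{inseparable} covers of degree $\geq 2$ (the paper uses this implicitly when it concludes from ``$R$ thin'' that ``$K\setminus R$ is not thin''), so you should cite that property rather than just ``Hilbert irreducibility'', which is classically stated for separable covers.

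For the second assertion, however, there is a genuine gap. You correctly identify the crux --- ruling out components with a nontrivial separable part $\sigma_i$ of degree $\geq 2$ --- but the Chebotarev/good-reduction sketch you offer is not carried out, and it is not obvious that ``bookkeeping over the finitely many components'' will close it: $R$ is only a \emph{union} of images $\pi_i(C_i(K))$ plus a finite set, and a single component being separably thin does not by itself conflict with $R$ being a subring. The geometric route also has further details to verify (classification of purely inseparable covers of $\mathbb{P}^1_K$ up to $K$-isomorphism, showing $\psi_i$ is affine, handling components whose geometric constant field extends $K$). The paper sidesteps all of this with a short descent argument: if $R$ is $\exists_1$-$\Lar(K)$-definable via some $f \in K[X,Y]$, then $R$ is also $\exists_1$-$\Lar(K^{(p)})$-definable in $K^{(p)}$ via the polynomial $f(X, Y^{1/p})^p \in K^{(p)}[X,Y]$, and unless $R = K^{(p)}$, the set $R \cap K^{(p)}$ is again an infinite proper subring of the global field $K^{(p)}$. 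Iterating, and using that an infinite $R$ cannot lie in $K^{(p^k)}$ for all $k$, one reduces to the case $R \not\subseteq K^{(p)}$, which was already handled. I would recommend replacing your sketch of the crux by this descent argument; it combines cleanly with your first-assertion argument and avoids the curve-by-curve analysis entirely.
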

\begin{proof}
Let $p = \charac(K)$.
Assume first that, if $p > 0$, then $R \not\subseteq K\pow{p}$; we will later see how to reduce to this case.

By \autocite[Corollary 8.5]{DDF} (in view of \autocite[Corollary 4.21]{DDF}), to show that $R$ and $K \setminus R$ are not $\exists_1$-$\Lar(K)$-definable in $K$, it suffices to show that $R$ and $K \setminus R$ are not thin subsets of $K$ (see \autocite[Definition 8.1]{DDF}).
We will use that, if $L/K$ is a finite separable field extension and $D \subseteq L$ is a thin subset of $L$, then $D \cap K$ is a thin subset of $K$ \autocite[Corollary 12.2.3]{Fri08}.

If $p = 0$, let $K_0 = \qq$.
Otherwise, fix a transcendental element $T \in R$ such that $K/\ff_p(T)$ is a separable finite field extension, and set $K_0 = \ff_p(T)$.
Let $R_0 = R \cap K_0$.
Since $R_0$ contains either $\zz$ or $\ff_p[T]$, it is not thin in $K_0$ \autocite[Remark 8.14]{DDF}, whereby $R$ is not thin in $K$.
This concludes the proof that $R$ is not $\exists_1$-$\Lar(K)$-definable in $K$ if $R \not\subseteq K\pow{p}$.

To show that $K\setminus R$ is not $\exists_1$-$\Lar(K)$-definable in $K$, we consider two cases.
For the first case, suppose that $R_0 = K_0$.
Then $R$ is a field, hence there exists $x \in K$ such that $xR \subseteq K\setminus R$.
Since $R$ is not thin in $K$, neither is $xR$, hence neither is $K \setminus R$.
In the second case, $R_0 \neq K_0$.
Then $(K_0\setminus R_0)^{-1}$ contains the maximal ideal of a discrete valuation on $K_0$, hence is not thin $K_0$, whereby $K_0\setminus R_0$ is not thin in $K_0$ and thus $K\setminus R$ is not thin in $K$.
We conclude that $K \setminus R$ is not $\exists_1$-$\Lar(K)$-definable in $K$ if $R \not\subseteq K\pow{p}$.

We now consider the case where $R \subseteq K\pow{p}$.
In this case, $R$ is thin in $K$, whence $K \setminus R$ is not thin in $K$, and hence $K \setminus R$ is not $\exists_1$-$\Lar(K)$-definable.
We further make the following observation: if $R$ would be $\exists_1$-$\Lar(K)$-definable in $K$, then it would also be $\exists_1$-$\Lar(K\pow{p})$-definable in $K\pow{p}$.
Indeed, by \Cref{P:existdef} there would exist $f \in K[X, Y]$ such that
\begin{align*}
R = \lbrace x \in K \mid \exists y \in K : f(x, y) = 0 \rbrace = \lbrace x \in K\pow{p} \mid \exists y \in K\pow{p} : f(x, y^{1/p})^p = 0 \rbrace.
\end{align*}
Since $f(X, Y^{1/p})^p \in K\pow{p}[X, Y]$, we obtain the desired $\exists_1$-$\Lar(K\pow{p})$-definability of $R$ in $K\pow{p}$.
Furthermore, unless $R = K\pow{p}$, we have that $R \cap K\pow{p}$ is an infinite proper subring of the global field $K\pow{p}$.
Applying this observation repeatedly, and using that $R \not\subseteq K\pow{p^k}$ for some $k \in \nat$, we may reduce to the case where $R \not\subseteq K\pow{p}$, which we covered before, and conclude that indeed $R$ is not $\exists_1$-$\Lar(K)$-definable in $K$.
\end{proof}

\section{Quaternion algebras over global and local fields}\label{sect:Equat}

We recall some basic facts regarding global fields and quaternion algebras over them; most of these are also contained in \autocite[Sections 3 and 4]{DaansGlobal}.

For a valuation $v$ on a field $K$, we denote by $\mc{O}_v$ the valuation ring of $v$, by $\mf{m}_v$ the unique maximal ideal of $\mc{O}_v$, and by $K_v$ the fraction field of the completion of $\mc{O}_v$.
We also call the pair $(K, v)$ a valued field.
Given $a \in \mc{O}_v$, we denote by $\ovl{a}^v$ the residue of $a$ modulo $\mf{m}_v$.
Similarly, for a polynomial $f \in \mc{O}_v[X_1, \ldots, X_n]$, we denote by $\ovl{f}^v$ the corresponding residue polynomial in $Kv[X_1, \ldots, X_n]$.
For a field $K$, we denote by $\mc{V}_K$ the set of $\zz$-valuations on $K$, i.e.~the set of valuations on $K$ with value group $\zz$.

Suppose now that $K$ is a global field.
In this case, a $\zz$-valuation on $K$ corresponds to what is often called a finite place.
Observe that for $x \in K^\times$ there exist only finitely many $v \in \mc{V}_K$ for which $v(x) \neq 0$ (or see e.g.~\cite[Theorem 33:1]{OMe00}).
For $v \in \mc{V}_K$, the field $K_v$ is a complete $\zz$-valued field with a finite residue field.
We call a complete $\zz$-valued field with finite residue field a \emph{local field}.
We will call a valuation $v$ on a field $K$ \emph{dyadic} if $v(2) > 0$ (equivalently, $\charac(Kv) = 2$), and \emph{non-dyadic} otherwise.

We mention two standard results from valuation theory for later use.
For a univariate polynomial $f$, we denote by $f'$ its formal derivative.
\begin{stel}[Hensel's Lemma]\label{T:Hensel}
Let $K$ be field endowed with a complete $\zz$-valuation $v$.
Let $f \in \mc{O}_v[X]$ be a polynomial, and let $a_0 \in \mc{O}_v$ be such that $v(f(a_0)) > 2v(f'(a_0))$.
Then there exists some $a \in \mc{O}_v$ with $f(a) = 0$ and $v(a_0 - a) > v(f'(a_0))$.
\end{stel}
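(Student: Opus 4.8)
The plan is to prove this by Newton's method. Note first that $f'(a_0) \neq 0$, since otherwise $v(f(a_0)) > 2v(f'(a_0))$ would be the impossible inequality $v(f(a_0)) > \infty$; so put $c = v(f'(a_0)) \geq 0$ and $\delta = v(f(a_0)) - 2c \geq 1$. I would then define a sequence $(a_n)_{n \in \nat}$ in $\mc{O}_v$ recursively by $a_{n+1} = a_n - f(a_n)/f'(a_n)$ and show that it converges to a root of $f$ lying close to $a_0$.

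The key algebraic input is the formal Taylor expansion: for any $g \in \mc{O}_v[X]$ there is a polynomial $G_g \in \mc{O}_v[X, Y]$ with
\[ g(X + Y) = g(X) + g'(X)\, Y + Y^2\, G_g(X, Y), \]
the point being that, writing $g = \sum_i c_i X^i$, the coefficients of $G_g$ are $\zz$-linear combinations $\binom{i}{j}c_i$ of the $c_i$ and hence again lie in $\mc{O}_v$. With this in hand I would prove by induction on $n$ the three assertions: (i) $a_n \in \mc{O}_v$; (ii) $v(f'(a_n)) = c$; and (iii) $v(f(a_n)) \geq 2^n\delta + 2c$. The base case $n = 0$ is immediate from the definitions of $c$ and $\delta$. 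Granting (i)--(iii) for $n$, set $h_n = -f(a_n)/f'(a_n)$; then (ii) and (iii) give $v(h_n) = v(f(a_n)) - c \geq 2^n\delta + c > c \geq 0$, so $h_n \in \mf{m}_v$ and hence $a_{n+1} = a_n + h_n \in \mc{O}_v$, which is (i) for $n+1$. Substituting $X = a_n$, $Y = h_n$ into the Taylor identity for $f$ and cancelling the linear term (which by choice of $h_n$ equals $-f(a_n)$) yields $f(a_{n+1}) = h_n^2\, G_f(a_n, h_n)$, so $v(f(a_{n+1})) \geq 2 v(h_n) \geq 2^{n+1}\delta + 2c$, which is (iii) for $n+1$. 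Applying the Taylor identity to $f'$ in place of $f$ gives $f'(a_{n+1}) = f'(a_n) + (f')'(a_n)\, h_n + h_n^2\, G_{f'}(a_n, h_n)$, a perturbation of $f'(a_n)$ by a term of valuation $\geq v(h_n) > c = v(f'(a_n))$; hence $v(f'(a_{n+1})) = c$ by the ultrametric inequality, which is (ii) for $n+1$.

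Once the induction is complete, $v(a_{n+1} - a_n) = v(h_n) \geq 2^n\delta + c \to \infty$, so $(a_n)_n$ is a Cauchy sequence; by completeness of $(K,v)$ it converges to some $a$, which lies in the closed subring $\mc{O}_v$. Since $f(a) = f(a_n) + (f(a) - f(a_n))$ with $v(f(a_n)) \to \infty$ by (iii) and $v(f(a) - f(a_n)) \geq v(a - a_n) \to \infty$ (as $f(a) - f(a_n)$ is divisible by $a - a_n$ in $\mc{O}_v$), we conclude $f(a) = 0$. Finally, since the valuations $v(h_n) = v(f(a_n)) - c$ are strictly increasing in $n$, the ultrametric inequality gives $v(a - a_0) = v\bigl(\sum_{n \geq 0} h_n\bigr) = v(h_0) = v(f(a_0)) - c > 2c - c = v(f'(a_0))$, as required.

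The only genuinely delicate point is the bookkeeping in the induction: one must keep the iterates inside $\mc{O}_v$ and, crucially, keep $v(f'(a_n))$ \emph{exactly} equal to $c$ rather than merely bounded below by it, since both the well-definedness of the next Newton step and the doubling of the defect in (iii) rely on this. The case $c = 0$ requires no separate treatment in the argument above.
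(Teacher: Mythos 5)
Your proof is correct and is the standard Newton-iteration argument for Hensel's Lemma over a complete discretely valued field; the paper does not give its own proof but cites \autocite[Theorem 1.3.1]{Eng05}, which proves the statement in essentially this way. The induction hypotheses (membership in $\mc{O}_v$, exact valuation of $f'(a_n)$, and quadratic growth of $v(f(a_n))$), the Taylor identity with integral remainder, and the ultrametric computation $v(a - a_0) = v(h_0)$ are all handled correctly.
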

\begin{proof}
See e.g.~\cite[Theorem 1.3.1]{Eng05}.
\end{proof}
More generally, we call a valuation $v$ on a field $K$ \emph{henselian} if it satisfies the conclusion of \Cref{T:Hensel}.
We refer to \cite[Chapter 4]{Eng05} for a discussion of the structure theory of valued fields.
The only henselian valuations appearing in this paper will be the complete $\zz$-valuations on local fields, but we will state some auxiliary results for general henselian valuations.
\begin{stel}[Weak Approximation Theorem]\label{T:WAT}
Let $K$ be a field, $n \in \nat$, and let $v_1, \ldots, v_n$ be pairwise different $\zz$-valuations on $K$.
For any $a_1, \ldots, a_n \in K$ and $\gamma \in \zz$, there exists an $x \in K$ with $v_i(x-a_i) > \gamma$ for all $i \in \lbrace 1, \ldots, n \rbrace$.
\end{stel}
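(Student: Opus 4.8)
The plan is to run the classical argument for the weak (ring-theoretic) approximation theorem, retaining only the parts that are needed and keeping exponents explicit. One may assume $n \geq 2$, the cases $n \leq 1$ being trivial ($x = a_1$).

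First I would reduce to a partition-of-unity statement: it suffices to produce, for each $i \in \{1, \ldots, n\}$, an element $w_i \in K$ with $v_i(w_i - 1) > N$ and $v_j(w_i) > N$ for all $j \neq i$, where $N$ is a bound chosen at the end. Indeed, for $x := \sum_{k=1}^n a_k w_k$ one has $x - a_i = a_i(w_i - 1) + \sum_{k \neq i} a_k w_k$, so $v_i(x - a_i)$ is at least the minimum of $v_i(a_i) + v_i(w_i - 1)$ and the numbers $v_i(a_k) + v_i(w_k)$ for $k \neq i$ (summands with $a_k = 0$ being irrelevant); as the finitely many $v_i(a_j)$ with $a_j \neq 0$ are fixed integers, taking $N$ large enough forces $v_i(x - a_i) > \gamma$ for every $i$. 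Second I would reduce the construction of the $w_i$ to a key lemma: for each $i$ there exists $z_i \in K$ with $v_i(z_i) > 0$ and $v_j(z_i) < 0$ for all $j \neq i$. Given such a $z_i$, set $w_i := (1 + z_i^{m})^{-1}$ for a large integer $m$: at $v_i$ one has $v_i(z_i^m) = m\, v_i(z_i) > 0$, so $1 + z_i^m$ is a $v_i$-unit and $v_i(w_i - 1) = v_i\big(-z_i^m(1 + z_i^m)^{-1}\big) = m\, v_i(z_i)$, while at each $v_j$ with $j \neq i$ the term $z_i^m$ has value $m\, v_j(z_i) < 0$, which dominates, so $v_j(w_i) = -m\, v_j(z_i)$; all these quantities grow linearly in $m$, so a single large $m$ makes them exceed $N$.

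Finally I would prove the key lemma by induction on $n$. For $n = 2$: the valuation rings of two distinct $\zz$-valuations are incomparable, since $\zz$ has no convex subgroup other than $0$ and itself, so $\mc{O}_{v_1} \subseteq \mc{O}_{v_2}$ would force $\mc{O}_{v_1} = \mc{O}_{v_2}$ and hence $v_1 = v_2$; therefore one can pick $a \in \mc{O}_{v_1} \setminus \mc{O}_{v_2}$ and $b \in \mc{O}_{v_2} \setminus \mc{O}_{v_1}$, and then $z := a/b$ satisfies $v_1(z) = v_1(a) - v_1(b) > 0$ and $v_2(z) = v_2(a) - v_2(b) < 0$. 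For the inductive step, to build $z_1$ (the others by relabelling), apply the induction hypothesis to $v_1, \ldots, v_{n-1}$ to get $y$ with $v_1(y) > 0$ and $v_i(y) < 0$ for $2 \leq i \leq n-1$, and the base case to the pair $v_1, v_n$ to get $z$ with $v_1(z) > 0$ and $v_n(z) < 0$; then $z_1 := y + z^{m}$ works for $m$ large: at $v_1$ both summands have positive value; at $v_i$ with $2 \leq i \leq n - 1$, either $v_i(z) \geq 0$, so $v_i(y) < 0$ dominates, or $v_i(z) < 0$, so $m\, v_i(z) < v_i(y)$ for $m$ large and $z^m$ dominates, giving $v_i(z_1) = m\, v_i(z) < 0$; and at $v_n$, $v_n(z^m) = m\, v_n(z) \to -\infty$ dominates for large $m$, giving $v_n(z_1) < 0$.

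The only step that is more than bookkeeping with the ultrametric inequality and suitable choices of exponents is the incomparability of the valuation rings of distinct $\zz$-valuations; that is where I would be careful (alternatively one could invoke the independence of distinct rank-one valuations from a standard reference, but the above is short and self-contained).
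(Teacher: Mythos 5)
Your proof is correct and follows the classical weak approximation argument. The paper does not supply a proof; it cites \cite[Theorem 2.4.1]{Eng05}, where the statement is proved for arbitrary pairwise \emph{independent} valuations, and remarks that pairwise distinct $\zz$-valuations are automatically independent. Your base case --- deducing incomparability of $\mc{O}_{v_1}$ and $\mc{O}_{v_2}$ from the fact that $\zz$ has no convex subgroups other than $\{0\}$ and $\zz$ --- is precisely the verification of that automatic independence, and the rest of your argument (reduction to an approximate partition of unity via $w_i = (1+z_i^m)^{-1}$, then an inductive construction of a separating element $z_1 = y + z^m$) is the same scheme used in the cited reference. The one point worth stating explicitly in a polished write-up is that a single exponent $m$ must be chosen large enough to satisfy all of the finitely many inequalities at once (in the inductive step, $m\,v_i(z) < v_i(y)$ for each $i$ with $v_i(z) < 0$ and $m\,v_n(z) < v_n(y)$), which is unproblematic but should be said. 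In short, you have given a correct, self-contained version of a proof the paper delegates to a textbook.
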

\begin{proof}
See e.g.~\cite[Theorem 2.4.1]{Eng05}; the independency assumption mentioned there is automatically satisfied for pairwise different $\zz$-valuations.
\end{proof}

A field is called \emph{real} if it carries a field ordering, \emph{nonreal} otherwise.
For a global field $K$ there is a one-to-one correspondence between the set of field orderings on $K$ and the set of field embeddings of $K$ into $\rr$.
In particular, a global field is real if and only if it can be embedded into $\rr$.
%For a field $K$ and $x \in K$ we say that $x$ is \emph{totally positive} if $x > 0$ for every field ordering $>$ on $K$.
%Note that, by definition, every element of a nonreal field is totally positive.
%Furthermore, any element of a field $K$ which is a sum of squares in $K$, is totally positive. An element $x$ of a global field $K$ is totally positive if it gets mapped to a positive element under every embedding of $K$ into $\rr$.

A \emph{quaternion algebra} over a field $K$ is a 4-dimensional central simple $K$-algebra.
We call a quaternion algebra \emph{split} if it has zero divisors, \emph{non-split} otherwise.
Given a field extension $L/K$ and a quaternion algebra $Q$ over $K$, we have that $Q \otimes_K L$ is a quaternion algebra over $L$.
We say that $Q$ is \emph{split over $L$} (respectively \emph{non-split over $L$}) if $Q \otimes_K L$ is split (respectively non-split).

Given $a, b \in K$ with $b(1+4a) \neq 0$, we define the $4$-dimensional $K$-algebra $[a, b)_K = K \oplus Ku \oplus Kv \oplus Kuv$ with $u^2 - u = a$, $v^2 = b$ and $uv+vu = v$.
This is a $K$-quaternion algebra, and in fact every $K$-quaternion algebra is of this form for some $a$ and $b$ \autocite[Section IX.10]{Alb39}.
For a $K$-quaternion algebra $Q$, we denote by $\Trd$ and $\Nrd$ the \emph{reduced trace} and \emph{reduced norm} maps $Q \to K$ respectively; see \autocite[Section 8.5]{Sch85} for the definition and basic properties.

A quaternion algebra $Q$ over a global field $K$ is called \emph{nonreal} if $Q$ is split over every embedding of $K$ into $\rr$.
By definition, if $K$ cannot be embedded into $\rr$ (i.e.~$K$ is nonreal) then all quaternion algebras over $K$ are nonreal.

Let $Q$ be a quaternion algebra over a field $K$.
Define
$$ \Delta Q = \lbrace v \in \mc{V}_K \mid Q \text{ is non-split over } K_v \rbrace.$$

\begin{prop}\label{P:quatsplit}
Let $K$ be a local field.
For every quadratic field extension $L/K$ and any quaternion algebra $Q$ over $K$, $Q$ is split over $L$.
\end{prop}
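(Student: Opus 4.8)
The plan is to use the classification of quaternion algebras over local fields via the local invariant. For a local field $K$, the Brauer group $\Br(K)$ is canonically isomorphic to $\qq/\zz$, and a quaternion algebra over $K$ corresponds to an element of order dividing $2$, hence to an element of $\frac{1}{2}\zz/\zz$; thus $Q$ is split precisely when its class is trivial. For a finite extension $L/K$ of degree $d$, the restriction map $\Br(K) \to \Br(L)$ corresponds to multiplication by $d$ on $\qq/\zz$. If $L/K$ is quadratic, restriction is multiplication by $2$, which kills every element of order dividing $2$; hence $Q \otimes_K L$ has trivial class in $\Br(L)$, i.e.\ $Q$ is split over $L$.

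Alternatively, and perhaps more in the spirit of a short self-contained argument: a quaternion algebra $Q$ over a field $K$ is split if and only if it contains a nonzero element of reduced norm zero, equivalently if and only if $Q$, viewed as carrying the quadratic form $\Nrd$, is isotropic. Writing $Q = [a,b)_K$ for suitable $a, b \in K$, the algebra $Q$ is split over a field extension $F/K$ if and only if $b$ is a norm from the \'etale quadratic extension $F[u]/(u^2 - u - a)$. So if $L/K$ is quadratic and $Q$ is non-split over $K$, then $K[u]/(u^2-u-a)$ is a field (a quadratic extension of $K$), and one checks that over a local field $K$ any two quadratic field extensions have a common quadratic subextension inside their compositum; more directly, $Q \otimes_K L$ is non-split only if $L[u]/(u^2 - u - a)$ is still a field, i.e.\ $L$ does not contain $K[u]/(u^2-u-a)$, but then $Q \otimes_K L$ is a genuine quaternion division algebra over $L$, which by the same Brauer-group count cannot be non-split after being induced up from $K$. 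The cleanest phrasing remains the invariant one above.

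The one point needing care is that there are two types of local fields to consider: the archimedean ones (which do not occur as $K_v$ for $v \in \mc{V}_K$ but could in principle be meant by ``local field'' — though the paper has explicitly defined a local field to be a complete $\zz$-valued field with finite residue field, so $K$ here is non-archimedean) and, among the non-archimedean ones, the dyadic case $\charac(Kv) = 2$. The Brauer-group computation $\Br(K) \cong \qq/\zz$ with restriction given by multiplication by the degree holds uniformly for all non-archimedean local fields, regardless of residue characteristic, so no case distinction is actually required; this is the main thing to get right, since elementary norm-form arguments for quaternion algebras are notoriously more delicate in the dyadic case. I would therefore simply cite the standard local class field theory computation of $\Br(K)$ and the behaviour of restriction under finite extensions (e.g.\ Serre's \emph{Local Fields}), and deduce the statement in one line.
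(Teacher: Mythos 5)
Your proof is correct, and it is essentially the argument the paper is citing: the paper's proof is just a reference to Pierce, \emph{Associative Algebras}, Section~17.10, which establishes precisely that $\Br(K)\cong\qq/\zz$ for a non-archimedean local field $K$ with restriction along a degree-$d$ extension acting as multiplication by $d$, from which the statement is immediate for $d=2$. Your remark that this computation is uniform in the residue characteristic (so the dyadic case needs no separate treatment) is exactly the point that makes the Brauer-group route cleaner than a direct norm-form argument.
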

\begin{proof}
See \autocite[Section 17.10]{Pie82}.
\end{proof}
\begin{prop}\label{P:quatNonSplitNecessary}
Let $K$ be a local field with $\zz$-valuation $v$.
Let $a, b \in K$ be such that $(1+4a)b \neq 0$ and $[a, b)_K$ is non-split over $K$.
Then $v(a) \leq 0$, and furthermore at least one of the following holds:
\begin{enumerate}[(a)]
\item $v(b)$ is odd,
\item $v(2) = 0$ and $v(1+4a)$ is odd,
\item $v(2) > 0$ and $v(a) < 0$.
\end{enumerate}
\end{prop}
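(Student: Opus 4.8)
The plan is to prove the contrapositives: I will show that if $v(a) > 0$ then $[a,b)_K$ is split over $K$, and that if all of (a), (b), (c) fail then $[a,b)_K$ is split over $K$. Two standard facts will be used repeatedly. First, the class of $[a,b)_K$ depends only on $a$ and on the square class of $b$: if $b = c^2 b'$ with $c \in K^\times$, then replacing $v$ by $v c^{-1}$ (note $uv c^{-1} + v c^{-1} u = v c^{-1}$ and $(vc^{-1})^2 = b'$) exhibits an isomorphism $[a,b)_K \cong [a,b')_K$. Second, writing $E = K[u] = K[X]/(X^2 - X - a)$ for the \'etale quadratic $K$-algebra generated by $u$, one has: if $E \cong K \times K$ then $[a,b)_K$ is split, while if $E$ is a field then $[a,b)_K$ is the cyclic algebra $(E/K, \sigma, b)$, which is split precisely when $b$ is a norm from $E$ (see \autocite[Section IX.10]{Alb39}).

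Suppose first that $v(a) > 0$. Apply \Cref{T:Hensel} to $f = X^2 - X - a \in \mc{O}_v[X]$ at the approximate root $a_0 = 0$: since $v(f(0)) = v(a) > 0$ and $v(f'(0)) = v(-1) = 0$, there is a root of $f$ in $\mc{O}_v$, hence $E \cong K \times K$ and $[a,b)_K$ is split. This proves the first assertion, so from now on I assume $[a,b)_K$ is non-split, and in particular $v(a) \leq 0$.

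Now suppose for contradiction that (a), (b), (c) all fail, so $v(b)$ is even, $v(1+4a)$ is even whenever $v$ is non-dyadic, and $v(a) \geq 0$ whenever $v$ is dyadic. Since $v(b)$ is even, after rescaling $b$ by a square I may assume $b \in \mc{O}_v^\times$. If $v$ is dyadic, then $v(a) \leq 0$ and $v(a) \geq 0$ give $a \in \mc{O}_v^\times$; the reduction $\ovl{f}^v \in Kv[X]$ is separable (its derivative reduces to $-1$), so either $\ovl{f}^v$ has a root in $Kv$ — then \Cref{T:Hensel} lifts the factorization of $f$ and $E \cong K \times K$ is split — or $\ovl{f}^v$ is irreducible, so that $E/K$ is the unramified quadratic extension of $K$; since every unit of $\mc{O}_v$ is a norm from an unramified extension of a local field, the unit $b$ is a norm and $[a,b)_K$ is split. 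If instead $v$ is non-dyadic, then $2 \in K^\times$, and setting $w = 2u - 1$ one checks $w^2 = 1 + 4a$ and $wv + vw = 2(uv + vu) - 2v = 0$, so $[a,b)_K$ is the usual quaternion symbol algebra $(1+4a, b)_K$, whose class depends only on the square classes of $1+4a$ and $b$. As $v(1+4a)$ is even, write $1+4a = \pi^{2m} d$ for a uniformizer $\pi$ and $d \in \mc{O}_v^\times$, and replace $1+4a$ by $d$; both slots are now units. If $d$ is a square, $(d,b)_K$ is split; otherwise $K(\sqrt{d})/K$ is the unramified quadratic extension of $K$ (a non-square unit generates it, as $v$ is non-dyadic), so the unit $b$ is a norm from it and $(d,b)_K$ is split. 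In every branch $[a,b)_K$ is split, contradicting our assumption; this proves the second assertion.

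I expect the only delicate point to be the local-field bookkeeping in the last paragraph: recognizing in each branch that the relevant quadratic subextension ($E/K$, respectively $K(\sqrt{d})/K$) is either trivial or unramified, and invoking the classical fact that units of a local field are norms from its unramified extensions. The identification $[a,b)_K \cong (1+4a,b)_K$ in the non-dyadic case, and the reductions to $a$ and $b$ being units, are elementary but are exactly what make the three-way alternative fall out uniformly, so they should be stated carefully.
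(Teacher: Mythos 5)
Your argument is correct and self-contained. For context: the paper itself gives no proof of this proposition, only the citation ``This is a rephrasing of \autocite[Proposition 4.1]{DaansGlobal}'', so there is no in-paper argument to compare against line by line. What you have written is, however, the natural direct proof one would give, and I verified the details: the Hensel step at $a_0 = 0$ correctly forces $v(a) \leq 0$; the rescaling $v \mapsto vc^{-1}$ correctly shows $[a,b)_K \cong [a,bc^{-2})_K$; the identification of $[a,b)_K$ as the cyclic algebra $(E/K, \sigma, b)$ with $E = K[X]/(X^2 - X - a)$ and the criterion ``split iff $b \in N_{E/K}(E^\times)$'' are standard; in the dyadic case the reduction of $X^2 - X - a$ is separable (derivative $\equiv -1$), so $E$ is either split or unramified, and a unit $b$ is always a norm from the unramified extension; in the non-dyadic case the substitution $w = 2u - 1$ correctly gives $[a,b)_K \cong (1+4a, b)_K$, and the parity reduction of $1+4a$ to a unit $d$ plus the fact that $K(\sqrt{d})/K$ is either trivial or unramified finishes the job. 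One small presentational remark: the logical structure mixes ``prove the contrapositive'' with ``argue by contradiction'' — you first establish $v(a) \leq 0$ unconditionally from non-splitness, and then use that fact inside the assumed-for-contradiction paragraph; this is valid, but it reads more cleanly if you state up front that non-splitness is the standing hypothesis throughout and you are deriving a contradiction from the failure of all three alternatives. The argument buys you a proof that does not depend on the companion paper and makes explicit which classical local-field facts (Hensel, units are norms from unramified extensions, quaternion symbol identities) drive the three-way disjunction in the statement.
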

\begin{proof}
This is a rephrasing of \autocite[Proposition 4.1]{DaansGlobal}.
\end{proof}
\begin{stel}[Albert-Brauer-Hasse-Noether Theorem and Hilbert Reciprocity]\label{T:ABHN}
Let $K$ be a global field and let $Q$ be a nonreal $K$-quaternion algebra.
Then $\lvert \Delta Q \rvert$ is even, and furthermore we have $\Delta Q = \emptyset$ if and only if $Q$ is split.
Conversely, given a subset $S \subseteq \mc{V}_K$ such that $\lvert S \rvert$ is even, there exists up to $K$-isomorphism a unique nonreal $K$-quaternion algebra $Q$ such that $\Delta Q = S$.
\end{stel}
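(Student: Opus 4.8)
The plan is to deduce everything from the fundamental exact sequence of global class field theory: for a global field $K$ there is an exact sequence
$$ 0 \to \Br(K) \to \bigoplus_{v} \Br(K_v) \xrightarrow{\ \sum_v \mathrm{inv}_v\ } \qq/\zz \to 0, $$
where $v$ ranges over all places of $K$ (the $\zz$-valuations and, when $K$ is a number field, also the archimedean ones), and $\mathrm{inv}_v \colon \Br(K_v) \hookrightarrow \qq/\zz$ is the local invariant map — the zero map at a complex place, an isomorphism onto $\tfrac12\zz/\zz$ at a real place, and an isomorphism onto $\qq/\zz$ at a finite place. I would take this sequence, together with the elementary facts that a central simple algebra of dimension $4$ is split precisely when it represents the trivial Brauer class and that two quaternion algebras are isomorphic precisely when they are Brauer-equivalent, as the main inputs; the real work is then just translating the statement into this language.

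For the first two assertions, let $Q$ be a nonreal $K$-quaternion algebra and let $\alpha \in \Br(K)$ be its class, which is $2$-torsion. For each place $v$, the algebra $Q \otimes_K K_v$ is split iff $\mathrm{inv}_v(\alpha) = 0$ and non-split iff $\mathrm{inv}_v(\alpha) = \tfrac12$. Since $Q$ is nonreal, $\mathrm{inv}_v(\alpha) = 0$ at every real place, and complex places contribute nothing, so $\Delta Q$ is exactly the set of places $v$ with $\mathrm{inv}_v(\alpha) = \tfrac12$. Hilbert reciprocity is the statement $\sum_v \mathrm{inv}_v(\alpha) = 0$ in $\qq/\zz$; as every summand lies in $\tfrac12\zz/\zz$ and only those indexed by $\Delta Q$ are nonzero, this forces $\tfrac{\lvert \Delta Q\rvert}{2} \equiv 0 \pmod{\zz}$, i.e.\ $\lvert \Delta Q\rvert$ is even. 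Exactness at $\Br(K)$ — the Hasse principle — says that $\alpha = 0$ iff its image in $\bigoplus_v \Br(K_v)$ vanishes, which happens iff $\Delta Q = \emptyset$; and $\alpha = 0$ means exactly that $Q$ is split. This gives the equivalence $\Delta Q = \emptyset \Leftrightarrow Q$ split.

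For the converse, given $S \subseteq \mc{V}_K$ with $\lvert S\rvert$ even, I would define $\beta \in \bigoplus_v \Br(K_v)$ to have local invariant $\tfrac12$ at each $v \in S$ and $0$ at every other place (in particular at all archimedean places). Then $\sum_v \mathrm{inv}_v(\beta) = \tfrac{\lvert S\rvert}{2} = 0$ in $\qq/\zz$, so by exactness $\beta$ lifts to a unique class $\alpha \in \Br(K)$, which is $2$-torsion because $\beta$ is. Here I would invoke the classical fact — a consequence of the equality of index and exponent for division algebras over global fields — that every $2$-torsion Brauer class over a global field is represented by a quaternion algebra (its index divides $2$). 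Choosing such a $Q$ with class $\alpha$, the vanishing of $\mathrm{inv}_v(\alpha)$ at the archimedean places shows $Q$ is nonreal, and $\Delta Q = S$ by the translation above. Uniqueness up to $K$-isomorphism follows since $\alpha$ is determined by $S$ (injectivity of $\Br(K) \hookrightarrow \bigoplus_v \Br(K_v)$) and a quaternion algebra is determined up to isomorphism by its Brauer class.

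The hard part is that none of the genuinely deep content sits in the bookkeeping above: the exact sequence encodes local class field theory (to obtain the invariant isomorphisms), global class field theory (for exactness in the middle, i.e.\ the Hasse principle), and an analytic ingredient — in its classical proof — behind surjectivity of the sum-of-invariants map, while the index-equals-exponent statement is itself one of the main consequences of global class field theory. A fully self-contained proof would therefore have to develop all of this; in practice I would simply cite standard references (e.g.\ the treatments of the Brauer group of a global field in texts on class field theory) for these inputs and present only the translation sketched here.
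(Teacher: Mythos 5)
Your proof is correct and is essentially the standard deduction from the Albert--Brauer--Hasse--Noether exact sequence together with the period-equals-index theorem for global fields; the paper itself gives no argument and simply cites \autocite[Theorem 8.1.17]{Neu15}, where the result is established along exactly the lines you sketch. Your treatment is careful on the one subtle bookkeeping point (that $\Delta Q$ is by definition a set of finite places, so the nonreal hypothesis is needed to discard archimedean contributions), and you correctly flag which inputs are being taken as black boxes.
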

\begin{proof}
See \autocite[Theorem 8.1.17]{Neu15}.
\end{proof}
\begin{prop}\label{P:Quatsplitbyquadratic}
Let $K$ be a field.
Let $a, b \in K$ be such that $(1+4a)b \neq 0$ and set $Q = [a, b)_K$.
Furthermore, let $c, d \in K$.
The following are equivalent.
\begin{enumerate}[(i)]
\item\label{it:Qsplitoverquadratic} $Q$ is split over the splitting field of $X^2 - cX + d$.
\item\label{it:Qtrdnrd} There exists $\alpha \in Q \setminus K$ such that $\Trd(\alpha) = c$ and $\Nrd(\alpha) = d$.
\item\label{it:Qxyz} There exist $x, y, z \in K$ with $2x-c, y$ and $z$ not all zero such that
$$ x^2 + x(c-2x) - a(c-2x)^2 - b(y^2 + yz - az^2) = d.$$
\end{enumerate}
\end{prop}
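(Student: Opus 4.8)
The plan is to deduce (ii)$\,\Leftrightarrow\,$(iii) from an explicit computation of the reduced trace and reduced norm on $Q=[a,b)_K$, and (i)$\,\Leftrightarrow\,$(ii) from the classical description of which quadratic extensions split a quaternion algebra. A pleasant feature of the presentation $[a,b)_K$ is that the computation is uniform in $\charac(K)$, so no separate treatment of the dyadic case is needed.

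For (ii)$\,\Leftrightarrow\,$(iii), I would first record the relevant formulas. Set $K[u]=K\oplus Ku$ and let $\sigma\colon K[u]\to K[u]$ be the $K$-algebra automorphism with $\sigma(u)=1-u$; the relation $uv+vu=v$ then gives $vr=\sigma(r)v$ for all $r\in K[u]$, and $\sigma$ is the restriction to $K[u]$ of the standard involution $x\mapsto\bar x$ of $Q$, with $\bar v=-v$. A short manipulation gives $\overline{p+qv}=\bar p-qv$ for $p,q\in K[u]$, whence
\[
\Trd(p+qv)=p+\bar p,\qquad \Nrd(p+qv)=p\bar p-b\,q\bar q,
\]
so that, writing $p=p_0+p_1u$ and $q=q_0+q_1u$,
\[
\Trd(p+qv)=2p_0+p_1,\qquad \Nrd(p+qv)=(p_0^2+p_0p_1-ap_1^2)-b(q_0^2+q_0q_1-aq_1^2).
\]
Parametrising the elements $\alpha\in Q$ with $\Trd(\alpha)=c$ as $\alpha=x+(c-2x)u+yv+zuv$ with $x,y,z\in K$, one sees that $\alpha\notin K$ exactly when $2x-c,y,z$ are not all zero, and that $\Nrd(\alpha)=d$ unwinds to precisely the displayed equation in (iii). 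This yields the equivalence (ii)$\,\Leftrightarrow\,$(iii).

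For (i)$\,\Leftrightarrow\,$(ii), I would first reformulate (ii). Every $\alpha\in Q\setminus K$ satisfies its reduced characteristic polynomial $X^2-\Trd(\alpha)X+\Nrd(\alpha)$, which is then also its minimal polynomial over $K$; hence (ii) holds if and only if there is a $K$-algebra embedding $A\hookrightarrow Q$, where $A=K[X]/(X^2-cX+d)$ (map $X$ to such an $\alpha$; the map is injective since $\dim_K A=2$ and $1,\alpha$ are $K$-linearly independent). It then remains to show that $A$ embeds in $Q$ if and only if $Q$ is split over the splitting field $L_0$ of $X^2-cX+d$. If $X^2-cX+d$ has a root in $K$, then $L_0=K$, so (i) asserts that $Q$ is split; and $A$ is isomorphic to $K\times K$ or to $K[\varepsilon]/(\varepsilon^2)$, so it contains a nonzero zero divisor, whence $A\hookrightarrow Q$ forces $Q$ to have zero divisors, i.e.\ to be split, while conversely a split $Q\cong M_2(K)$ admits such an embedding via the companion matrix of $X^2-cX+d$. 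If $X^2-cX+d$ is irreducible over $K$, then $A=L_0$ is a quadratic field extension, and one invokes the well-known fact that a quadratic field extension of $K$ splits a quaternion algebra over $K$ precisely when it is $K$-isomorphic to a $K$-subalgebra of it --- valid in every characteristic, including for inseparable extensions (see e.g.~\autocite{Pie82}). In either case (i)$\,\Leftrightarrow\,$(ii).

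The step requiring the most care is (i)$\,\Leftrightarrow\,$(ii): one must separate off the degenerate cases where $X^2-cX+d$ is reducible (so $A$ is not a field) or defines an inseparable extension, and one must be sure that the quoted embedding criterion holds in that generality. The reduced-norm computation, though mildly error-prone with signs and the characteristic-$2$ bookkeeping, is otherwise entirely routine.
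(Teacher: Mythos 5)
Your proof is correct, and it rests on the same two pillars as the paper's: explicit formulas for $\Trd$ and $\Nrd$ in the presentation $[a,b)_K$ for (ii)$\Leftrightarrow$(iii), and the embedding criterion for quadratic extensions splitting a quaternion algebra (Albert's theorem) for (i)$\Leftrightarrow$(ii). Where you differ is in organization. For (ii)$\Leftrightarrow$(iii), the paper simply cites the trace/norm formulas from \autocite[Section 3]{DaansGlobal}, whereas you derive them from the standard involution; your derivation is correct (and uniform in characteristic, as you note). For (i)$\Leftrightarrow$(ii), the paper's case split is on whether $Q$ itself is split: in the split case both (i) and (ii) hold for any $c,d$ (any trace and determinant are realised by a non-scalar matrix), and in the non-split case one argues that any $\alpha$ from (ii) generates a quadratic subfield equal to $L_0$, and conversely that $L_0$ embeds into $Q$ by \autocite[Theorem IV.27]{Alb39}. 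Your case split is instead on whether $X^2-cX+d$ is reducible over $K$: this requires you to check separately that an embedding of $K\times K$ or $K[\varepsilon]/(\varepsilon^2)$ exists iff $Q$ has zero divisors (which you do, via the companion matrix), whereas the paper's slicing sidesteps that degeneracy by letting it be absorbed into the ``$Q$ split'' case. The paper's decomposition is marginally cleaner; yours has the advantage of reformulating (ii) explicitly as ``the étale/quadratic algebra $K[X]/(X^2-cX+d)$ embeds in $Q$'', which makes the role of Albert's criterion transparent. Both are fine.
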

\begin{proof}
The equivalence between \eqref{it:Qtrdnrd} and \eqref{it:Qxyz} follows immediately from the formulas for reduced norm and trace given in \autocite[Section 3]{DaansGlobal}.

We now discuss the equivalence between \eqref{it:Qsplitoverquadratic} and \eqref{it:Qtrdnrd}.
If $Q$ is already itself split, then $Q \cong \mbb{M}_2(K)$, $\Trd$ coincides with the matrix trace, and $\Nrd$ with the matrix determinant (see again \autocite[Section 8.5]{Sch85}). Since there exist non-diagonal matrices in $\mbb{M}_2(K)$ with any prescribed trace and determinant, it follows that both \eqref{it:Qsplitoverquadratic} and \eqref{it:Qtrdnrd} are satisfied.

Assume from now on that $Q$ is non-split.
For $\alpha \in Q \setminus K$ we have by definition of reduced trace and norm that $\alpha^2 - \Trd(\alpha)\alpha + \Nrd(\alpha) = 0$.
If \eqref{it:Qtrdnrd} holds, then $K(\alpha)$ is thus the splitting field of $X^2 - cX + d$. Since $Q$ is split over its subfield $K(\alpha)$ (see e.g.~\autocite[Theorem 5.4]{Sch85}), we obtain \eqref{it:Qsplitoverquadratic}.

Conversely, assume that \eqref{it:Qsplitoverquadratic} holds.
Since $Q$ is non-split, the splitting field of $X^2- cX + d$ is a proper quadratic extension of $K$.
By \autocite[Theorem IV.27]{Alb39} the splitting field of $X^2 - cX + d$ embeds over $K$ into $Q$.
Denoting by $\alpha \in Q$ an element for which $\alpha^2 - c\alpha + d = 0$, we obtain that $\alpha \not\in K$, $\Trd(\alpha) = c$ and $\Nrd(\alpha) = d$, as desired.
\end{proof}

\section{Defining valuation rings, individually and uniformly}\label{sect:definingValuationsGlobal}
In this section, we will show that a subring $R$ of a global field $K$ which is a finite intersection of valuation rings of $K$, is $\exists_3$-$\Lar(K)$-definable in $K$ (\Cref{P:valE3}).
This implies that in fact $R^n$ is $\exists_3$-$\Lar(K)$-definable in $K^n$ for every natural number $n$, as we will see in \Cref{C:binaryFormValuations}.
Finally, at the end of this section, we recall a result on uniform existential definability of finite intersections of valuation rings (essentially due to Poonen and Koenigsmann), see \Cref{C:EdefinabilitySemilocalRingsGlobalFields}.

For a field $K$ and $a \in K$, denote by $\sep{K}{a}$ the splitting field of $X^2 - X - a$ over $K$.
In other words, $\sep{K}{a} = K$ if $X^2 - X - a$ has a root in $K$, otherwise $\sep{K}{a} \cong K[X]/(X^2 - X - a)$.

\begin{lem}\label{lem:valE3}
Let $K$ be a global field.
Let $S$ be a finite set of $\zz$-valuations on $K$, $Q$ a nonreal quaternion algebra over $K$ such that $S \subseteq \Delta Q$.
Let $\pi, a \in K^\times$ such that for all $v \in \Delta Q$ one has $v(\pi) = 1$, $v(a) \geq v(1+4a) = 0$, and $X^2 - X - a$ has a root over $K_v$ if and only if $v \in S$.
%Furthermore, assume that $\pi$ and $-(1+4a)$ are totally positive.
Then
\begin{equation}\label{eq:split}
\bigcap_{v \in S} \mathcal{O}_v = \lbrace 0 \rbrace \cup \lbrace x \in K \mid Q \text{ is split over } \sep{K}{a - (\pi x^2)^{-1}} \rbrace .
\end{equation}
\end{lem}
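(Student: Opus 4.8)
The plan is to prove the set equality in \eqref{eq:split} by double inclusion, using the criterion for when a quaternion algebra splits over a quadratic extension from \Cref{P:Quatsplitbyquadratic}, together with the local-global principle of \Cref{T:ABHN} and the local constraints coming from \Cref{P:quatNonSplitNecessary,P:quatsplit}. The key observation is that for $x \in K^\times$, the field $\sep{K}{a-(\pi x^2)^{-1}}$ is the splitting field of $X^2 - X - (a - (\pi x^2)^{-1})$, so \Cref{P:Quatsplitbyquadratic} (with $c = 1$, $d = a - (\pi x^2)^{-1}$) tells us $Q$ splits over this field if and only if there is an element of $Q \setminus K$ with reduced trace $1$ and reduced norm $a - (\pi x^2)^{-1}$; equivalently, by \Cref{T:ABHN}, if and only if $\Delta Q' = \emptyset$ where $Q'$ is the nonreal quaternion algebra with $\Delta Q' = \Delta Q \,\triangle\, \Delta(\text{the algebra attached to the quadratic extension})$. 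More usefully, splitting over a quadratic field extension $L/K$ is equivalent to $L$ embedding into $Q$, which by the local-global principle happens if and only if $L \otimes_K K_v$ is a field that embeds into $Q \otimes_K K_v$ for every $v$; since over a local field every quaternion algebra is split by every quadratic extension (\Cref{P:quatsplit}), the only obstruction is at places $v \in \Delta Q$ where $L \otimes_K K_v$ must be a field, i.e.\ $X^2 - X - (a-(\pi x^2)^{-1})$ must be irreducible over $K_v$.

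So the crux becomes a purely local computation: for $v \in \Delta Q$, when does $X^2 - X - (a - (\pi x^2)^{-1})$ fail to have a root in $K_v$? I would split into cases according to $v(x)$. Using $v(\pi) = 1$, we have $v((\pi x^2)^{-1}) = -1 - 2v(x)$, which is odd. If $v(x) \geq 0$ then $v(a - (\pi x^2)^{-1}) = -1 - 2v(x) < 0$ is odd and negative; in the non-dyadic case a standard Newton-polygon / Hensel argument shows $X^2 - X - c$ with $v(c)$ odd and negative has no root, and in the dyadic case one checks the same, since $v(a) \geq 0$ so $a - (\pi x^2)^{-1}$ has the same (negative odd) valuation as $-(\pi x^2)^{-1}$ and $X^2 - X - c \sim X^2 - c$ up to the lower-order term. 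Hence for $v(x) \geq 0$, $X^2-X-(a-(\pi x^2)^{-1})$ is irreducible over $K_v$ for every $v \in \Delta Q$, so $Q$ splits over the quadratic extension; combined with the separate inclusion of $0$, this gives $\bigcap_{v\in S}\mc{O}_v \subseteq$ (right-hand side) once we note $x \in \bigcap_{v\in S}\mc O_v$ certainly has $v(x) \geq 0$ for $v \in S$ — but we need it for all $v \in \Delta Q$, which is where the hypothesis on $S$ and the polynomial $X^2-X-a$ enters: for $v \in \Delta Q \setminus S$, $X^2 - X - a$ has \emph{no} root over $K_v$, and I will use this to handle $v(x) < 0$.

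For the reverse inclusion, suppose $x \in K$ and $Q$ splits over $\sep{K}{a - (\pi x^2)^{-1}}$; I must show $x = 0$ or $v(x) \geq 0$ for all $v \in S$. Equivalently, if $x \neq 0$ and $v(x) < 0$ for some $v \in S$, I derive a contradiction by showing $X^2 - X - (a - (\pi x^2)^{-1})$ has a root over $K_v$, i.e.\ the quadratic extension does not ``block'' at $v$, and then arguing that $Q$ still cannot split because it remains non-split at $v$... wait — that would be consistent, not contradictory. The right formulation: if $v(x) < 0$, then $v((\pi x^2)^{-1}) = -1-2v(x)$ is odd but now \emph{positive} (since $v(x) \le -1$ gives $-1 - 2v(x) \ge 1$), so $v(a - (\pi x^2)^{-1}) \ge \min(v(a), 1) \ge 0$ and in fact, comparing residues, $\sep{K_v}{a - (\pi x^2)^{-1}}$ has the same splitting behaviour as $\sep{K_v}{a}$ because $(\pi x^2)^{-1} \in \mf m_v$ and $X^2 - X - a$ is separable mod $\mf m_v$ (as $v(1+4a)=0$), so Hensel's Lemma transfers roots; hence $X^2-X-(a-(\pi x^2)^{-1})$ has a root over $K_v$ iff $X^2 - X - a$ does, iff $v \in S$. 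But if $v \in S$, this polynomial \emph{does} have a root over $K_v$, so $\sep{K_v}{a-(\pi x^2)^{-1}} = K_v$, and then $Q \otimes_K K_v$ being split over $K_v$ would force $v \notin \Delta Q$ — contradicting $S \subseteq \Delta Q$. Thus $v(x) \geq 0$ for all $v \in S$, as wanted, and we separately observe $0$ lies in $\bigcap_{v \in S}\mc{O}_v$. The main obstacle I anticipate is getting the two local cases ($v(x) \ge 0$ versus $v(x) < 0$) cleanly uniform across dyadic and non-dyadic $v$, and carefully invoking \Cref{P:quatNonSplitNecessary} to ensure that in the $v(x) \geq 0$ case the odd-valuation of $a - (\pi x^2)^{-1}$ genuinely obstructs a root — i.e.\ that the quadratic extension is ramified and hence does not split $Q$ — versus simply citing that a quadratic extension in which a uniformizer-type element appears with odd valuation is a field; the dyadic subtlety where $X^2 - X - c$ behaves differently from $X^2 - c$ will need the hypothesis $v(a) \geq 0$ to keep the lower-order term harmless.
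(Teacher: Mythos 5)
Your proposal is correct and follows essentially the same route as the paper: reduce via the Albert--Brauer--Hasse--Noether theorem and \Cref{P:quatsplit} to the irreducibility of $X^2-X-(a-(\pi x^2)^{-1})$ over $K_v$ for each $v \in \Delta Q$, then analyse the two cases $v(x)\geq 0$ (odd valuation of $(\pi x^2)^{-1}$) and $v(x)<0$ (Hensel, comparing with $X^2-X-a$). The dyadic/non-dyadic split you anticipate for the $v(x)\geq 0$ case is unnecessary: if $\alpha\in K_v$ were a root then (using $v(a)\geq 0$) one would have $v(\alpha)<0$ and hence $2v(\alpha)=v(\alpha^2-\alpha-a)=-1-2v(x)$, an even-equals-odd contradiction valid in every residue characteristic.
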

\begin{proof}
Consider $x \in K^\times$ and let $L = \sep{K}{a - (\pi x^2)^{-1}}$.
%Note that $L$ is nonreal: if $\charac(K) \neq 2$ then $L = K(\sqrt{1+4a - 4(\pi x^{2})^{-1}})$, but as $\pi$ and $-(1+4a)$ were assumed to be totally positive, $1+4a - 4(\pi x^{2})^{-1} < 0$ for every ordering $<$ on $K$, whereby $L$ cannot carry any orderings.
Since $Q$ is nonreal (and hence remains nonreal over $L$) it follows by \Cref{T:ABHN} that $Q$ is split over $L$ if and only if it is split over $L_w$ for all $\zz$-valuations $w$ on $L$.
Since for any $\zz$-valuation $w$ on $L$ we have that $L_w \cong LK_v = \sep{(K_v)}{a - (\pi x^2)^{-1}}$ for some $\zz$-valuation $v$ on $K$, we conclude that $Q$ is split over $L$ if and only if it is split over $\sep{(K_v)}{a - (\pi x^2)^{-1}}$ for all $v \in \Delta Q$.
%If $R$ is an extension of $K$ isomorphic to $\rr$, then the discriminant of $X^2 - X - a + (\pi x^2)^{-1}$ is $1 + 4a - 4(\pi x^2)^{-1} < 0$, and thus $\sep{R}{a - (\pi x^2)^{-1}}$ is algebraically closed and hence splits $q$.
%Thus, from now on, we only need to consider henselisations of $\zz$-valuations.
%Since any quadratic extension of a local field splits the unique anisotropic $2$-fold Pfister form, we conclude that $q$ is isotropic over $K[X]/(X^2 + X + a + (\pi x^2)^{-1})$ if and only if $X^2 + X + a + (\pi x^2)^{-1}$ is irreducible over $K_v$ for all $v \in \Delta q$.
In order to show \eqref{eq:split}, we thus have to show that $x \in \bigcap_{v \in S} \mc{O}_v$ if and only if $Q$ is split over $\sep{(K_v)}{a - (\pi x^2)^{-1}}$ for all $v \in \Delta Q$.
Finally, in view of \Cref{P:quatsplit}, for any $v \in \Delta Q$, we have that $Q$ is split over $\sep{(K_v)}{a - (\pi x^2)^{-1}}$ if and only if $\sep{(K_v)}{a - (\pi x^2)^{-1}}/K_v$ is a quadratic field extension, i.e.~if and only if $X^2 - X - (a - (\pi x^2)^{-1})$ is irreducible over $K_v$.
In summary, we are left to show the following:
$$ x \in \bigcap_{v \in S} \mc{O}_v \enspace\Leftrightarrow\enspace \forall v \in \Delta Q : X^2 - X - (a - (\pi x^2)^{-1}) \text{ is irreducible over } K_v. $$

Consider a valuation $v \in \Delta Q$.
Assume first that $x \in \mc{O}_v$.
Suppose that $\alpha \in K_v$ were a root of $X^2 - X - (a - (\pi x^2)^{-1})$.
Since we then must have $v(\alpha) < 0$, we compute that $2v(\alpha) = v(\alpha^2 - \alpha - a) = v((\pi x^2)^{-1}) = - 1 - 2v(x)$, which contradicts the fact that $v$ is a $\zz$-valuation.
We obtain that $X^2 - X - (a - (\pi x^2)^{-1})$ is irreducible over $K_v$.

On the other hand, for $v \in \Delta Q$ and $x \in K \setminus \mathcal{O}_v$ one has that $X^2 - X - (a - (\pi x^2)^{-1}) \equiv X^2 - X - a \bmod \mf{m}_v$, so by Hensel's Lemma (\Cref{T:Hensel}) we have that $X^2 - X - (a - (\pi x^2)^{-1})$ is has a root over $K_v$ if and only if $X^2 - X - a$ has a root over $K_v$, which by assumption is precisely the case when $v \in S$.

As desired, we conclude that for $x \in K^\times$ and $v \in \Delta Q$, we have that $X^2 - X - (a - (\pi x^2)^{-1})$ is irreducible over $K_v$ if and only if either $v \not\in S$ or $x \in \mc{O}_v$.
%The equality \eqref{eq:split} now follows from the relevant local-global principle as explained in the first paragraph of the proof.
\end{proof}

\begin{prop}\label{P:valE3}
Let $K$ be a global field. Let $S$ be a finite set of $\zz$-valuations on $K$.
Then $\bigcap_{v \in S} \mathcal{O}_v$ has an $\exists_3$-$\Lar(K)$-definition in $K$.
\end{prop}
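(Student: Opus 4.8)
The plan is to deduce the statement from \Cref{lem:valE3}. That lemma reduces the task to two things: first, exhibiting a nonreal quaternion algebra $Q$ over $K$ together with $\pi, a \in K^\times$ satisfying the hypotheses there for the given set $S$; and second, turning the resulting description
$$ \bigcap_{v \in S} \mc{O}_v = \lbrace 0 \rbrace \cup \lbrace x \in K \mid Q \text{ is split over } \sep{K}{a - (\pi x^2)^{-1}} \rbrace $$
into an $\exists_3$-$\Lar(K)$-formula.

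For the first step, I would first enlarge $S$ to a finite set $T \subseteq \mc{V}_K$ of even cardinality: take $T = S$ if $\lvert S \rvert$ is even, and otherwise adjoin one valuation $w \in \mc{V}_K \setminus S$ (possible since $\mc{V}_K$ is infinite for a global field). By the Albert--Brauer--Hasse--Noether Theorem (\Cref{T:ABHN}) there is then a nonreal $K$-quaternion algebra $Q$ with $\Delta Q = T$; write $Q = [\alpha, \beta)_K$. By Weak Approximation (\Cref{T:WAT}) I can choose $\pi \in K^\times$ with $v(\pi) = 1$ for all $v \in T$. Choosing $a$ is the delicate point: using Weak Approximation again, I would pick $a \in K^\times$ with $v(a) \geq 1$ for every $v \in S$ and, at the auxiliary place $w$ (if present), with $a$ congruent modulo $\mf m_w$ to a fixed unit $a_0$ for which $X^2 - X - a_0$ is irreducible over $K_w$ --- such $a_0$ exists, describing the unramified quadratic extension of $K_w$. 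One then verifies, via \Cref{T:Hensel} and direct valuation computations (splitting into the dyadic and non-dyadic cases to handle $1 + 4a$), that for every $v \in \Delta Q = T$ one has $v(1+4a) = 0 \leq v(a)$ and that $X^2 - X - a$ has a root over $K_v$ precisely when $v \in S$: for $v \in S$ the polynomial reduces to $X(X-1)$ modulo $\mf m_v$, whose simple roots lift; at $w$ the irreducible reduction rules out a root of nonnegative valuation, and a valuation count rules out one of negative valuation. This verification --- making the splitting behaviour of the Artin--Schreier polynomial match $S$ exactly while controlling $v(1+4a)$ --- is the main obstacle, and is where the dyadic/non-dyadic dichotomy enters.

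For the second step, I would apply \Cref{P:Quatsplitbyquadratic} to $Q = [\alpha, \beta)_K$ with $c = 1$ and $d = (\pi x^2)^{-1} - a$, so that $X^2 - cX + d = X^2 - X - (a - (\pi x^2)^{-1})$: for a fixed $x \in K^\times$, $Q$ is split over $\sep{K}{a - (\pi x^2)^{-1}}$ if and only if there exist $\xi, \eta, \zeta \in K$, with $2\xi - 1, \eta, \zeta$ not all zero, such that
$$ \xi^2 + \xi(1 - 2\xi) - \alpha(1 - 2\xi)^2 - \beta(\eta^2 + \eta\zeta - \alpha\zeta^2) = (\pi x^2)^{-1} - a. $$
Multiplying through by $\pi x^2$ clears the denominator and yields an equivalent polynomial equation over $K$ for $x \neq 0$; at $x = 0$ the cleared equation reads $0 = 1$ and so fails, but this is harmless since $0$ already lies in the target set. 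Combining this with the displayed description of $\bigcap_{v \in S} \mc{O}_v$ from \Cref{lem:valE3}, one obtains that $\bigcap_{v \in S} \mc{O}_v$ is exactly the set of $x \in K$ for which $x = 0$ or $\exists \xi, \eta, \zeta \in K$ satisfying a quantifier-free $\Lar(K)$-condition; since the disjunct $x = 0$ involves none of $\xi, \eta, \zeta$, this is an $\exists_3$-$\Lar(K)$-formula, as required. (When $S = \emptyset$ the construction still applies, with $Q$ split and $\pi = a = 1$, but of course $\bigcap_{v \in \emptyset} \mc{O}_v = K$ is trivially definable.)
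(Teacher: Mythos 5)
Your proposal is correct and follows essentially the same route as the paper: produce a nonreal quaternion algebra $Q$ with $S \subseteq \Delta Q$ finite via \Cref{T:ABHN}, choose $\pi, a$ by Weak Approximation so that \Cref{lem:valE3} applies, and then read off the $\exists_3$-$\Lar(K)$-definition from \Cref{P:Quatsplitbyquadratic} with $c = 1$ and $d = (\pi x^2)^{-1} - a$. Where the paper's proof simply states that Weak Approximation supplies suitable $\pi, a$, you have usefully spelled out one concrete valid choice (taking $v(a) \geq 1$ at $v \in S$ and $a \equiv a_0$ at the optional auxiliary place), and your verification that $v(1+4a) = 0$ and of the splitting behaviour of $X^2 - X - a$ in both the dyadic and non-dyadic cases is sound.
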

\begin{proof}
There exists a nonreal quaternion algebra $Q$ over $K$ such that $S \subseteq \Delta Q$, and furthermore, $\Delta Q$ is finite. This follows from the second part of \Cref{T:ABHN}, but can also be seen more elementarily, see e.g. \autocite[Lemma 6.3.6]{DaansThesis}.

By Weak Approximation (\Cref{T:WAT}), we can find $\pi, a \in K^\times$ such that the criteria of \Cref{lem:valE3} are satisfied and thus \eqref{eq:split} holds.
Thus it suffices to show that the set on the right in \eqref{eq:split} has an $\exists_3$-$\Lar(K)$-definition in $K$.
This is immediate from \Cref{P:Quatsplitbyquadratic}.
\end{proof}
\begin{opm}\label{R:valE}
The proof technique from \Cref{lem:valE3} and \Cref{P:valE3} goes back to Julia Robinson.
In fact, she showed that, for $K = \qq$, $\bigcap_{v \in S} \mc{O}_v$ is $\exists_3$-$\Lar$-definable with $S = \lbrace v_2, v_p \rbrace$ where $p$ is a prime with $p \equiv 3 \bmod 4$.
Similarly, she showed that $\bigcap_{v \in S} \mc{O}_v$ is $\exists_3$-$\Lar$-definable with $S = \lbrace v_p, v_q \rbrace$ where $p$ and $q$ are primes with $p \equiv 1 \bmod 4$ and such that $q$ is not a square modulo $p$ \autocite[Lemma 3 and 4]{Rob49}.
A similar argument can be found in \autocite[Lemma 3.1]{ZhangSunZinQ} for $S = \lbrace v_2 \rbrace$.

It is in any case well-known that in a global field, any valuation ring (and hence also any finite intersection of valuations rings) is existentially definable, see e.g.~\autocite[Proposition 3.1]{KimRoushRational} for number fields, \autocite[Lemma 3.22]{ShlapentokhGlobal} for global fields of odd characteristic, or \autocite[Theorem 5.15]{EisentragerThesis} for a proof covering all characteristics.
Our argument has the advantage of yielding in all cases a formula requiring only $3$ existential quantifiers.
\end{opm}

\begin{opm}\label{R:uniformity}
Inspection of the proof of \Cref{P:valE3} reveals that the defining formula is uniform in the following sense: there exists an $\exists_3$-$\Lar$-formula $\varphi(X, C_1, \ldots, C_m)$ such that, for every global field $K$ and every finite set $S$ of $\zz$-valuations on $K$, there exist parameters $c_1, \ldots, c_m \in K$ such that
$$ \bigcap_{v \in S} \mc{O}_v = \lbrace x \in K \mid K \models \varphi(x, c_1, \ldots, c_m) \rbrace.$$
An even more robust formula, but with more quantifiers, will be given in \Cref{C:EdefinabilitySemilocalRingsGlobalFields}.
\end{opm}

In the setting of \Cref{P:valE3} and with $S \neq \emptyset$, by \Cref{P:NotE1} we have that $\bigcap_{v \in S} \mc{O}_v$ is not $\exists_1$-$\Lar(K)$-definable in $K$.
\begin{ques}
Let $K$ be a global field. Let $S$ be a non-empty finite set of $\zz$-valuations on $K$.
Does $\bigcap_{v \in S} \mc{O}_v$ have an $\exists_2$-$\Lar(K)$-definition in $K$?
\end{ques}

When $R$ is a subring of a field $K$ and $R$ is existentially definable in $K$, then clearly also $R^{\times}$ is existentially definable in $K$, and $R^n$ is existentially definable in $K^n$ for all $n \in \nat$.
However, if for example $R$ is $\exists_m$-$\Lar(K)$-definable in $K$, then the naive way to existentially define $R^n$ in $K^n$ requires $nm$ quantifiers, or $n(m-1) + 1$ quantifiers if one can apply \Cref{cor:finGenOverPerf}.
We investigate cases in which a better bound on the number of required quantifiers can be found, in particular when $R$ is a finite intersection of valuation rings.
\begin{prop}\label{P:definingValuationUnits}
Let $R$ be an integrally closed domain and $K = \Frac(R)$.
For $x \in K$ one has
$$ x \in R^\times \text{ if and only if } x \neq 0 \text{ and } x + x^{-1} \in R.$$
In particular, if $R$ is $\exists_m$-$\Lar(K)$-definable for $m \in \nat$, then also $R^\times$ is $\exists_m$-$\Lar(K)$-definable.
\end{prop}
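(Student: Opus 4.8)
The plan is to prove the characterization $x \in R^\times \iff x \neq 0 \wedge x + x^{-1} \in R$ for an integrally closed domain $R$ with fraction field $K$, and then read off the quantifier count.

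First I would handle the easy implication: if $x \in R^\times$, then certainly $x \neq 0$, and both $x \in R$ and $x^{-1} \in R$, so $x + x^{-1} \in R$. For the converse, suppose $x \in K^\times$ with $y := x + x^{-1} \in R$. Then $x$ satisfies the monic polynomial $T^2 - yT + 1 = 0$ over $R$ (indeed $x^2 - yx + 1 = x^2 - (x+x^{-1})x + 1 = 0$). Hence $x$ is integral over $R$, and since $R$ is integrally closed in $K$ and $x \in K$, we get $x \in R$. The same equation, read as $x^{-1} = y - x \in R$ (or: $x^{-1}$ also satisfies $T^2 - yT + 1 = 0$, hence is integral over $R$, hence lies in $R$), shows $x^{-1} \in R$. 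Therefore $x \in R^\times$. This is the entire substantive content; there is no real obstacle, the only subtlety being to remember to invoke integral closure rather than just observing $x$ is a root of a monic polynomial.

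For the ``in particular'' clause: suppose $R$ is $\exists_m$-$\Lar(K)$-definable, say by an $\exists_m$-formula $\rho(X)$ with $R = \{x \in K \mid K \models \rho(x)\}$. Using the signature of fields we could write $R^\times = \{x \mid x \neq 0 \wedge \rho(x) \wedge \rho(x^{-1})\}$, but to keep the quantifier count at $m$ (rather than $2m$) I would instead write the condition as $x \neq 0 \wedge \rho(x + x^{-1})$, substituting the term $x + x^{-1}$ into the single formula $\rho$; this introduces no new quantifiers, and conjunction with the quantifier-free condition $x \neq 0$ does not either. This gives an $\exists_m$-$\La_\fie(K)$-definition of $R^\times$, and by \Cref{C:LringvsLfield} it follows that $R^\times$ is $\exists_m$-$\Lar(K)$-definable. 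The only point to be careful about is that our convention $0^{-1} = 0$ makes $x + x^{-1}$ evaluate to $0$ at $x = 0$, but since we explicitly conjoin $x \neq 0$, this causes no harm.
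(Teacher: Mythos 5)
Your proof is correct and follows essentially the same route as the paper: the nontrivial direction is to observe that $x$ is integral over $R$ when $x+x^{-1} \in R$ (the paper notes $x \in R[x^{-1}]$, you write the explicit monic quadratic $T^2 - (x+x^{-1})T + 1$, which is the same fact) and then invoke integral closure. Your treatment of the ``in particular'' clause, substituting the term $x+x^{-1}$ into $\rho$ and appealing to \Cref{C:LringvsLfield} to return to $\Lar$, makes explicit the step the paper leaves as ``follows immediately.''
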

\begin{proof}
The implication from left to right is immediate. Conversely, assume that $x + x^{-1} \in R$, then $x \in R[x^{-1}]$.
This implies that $x$ is integral over $R$, and thus by assumption $x \in R$. Then also $x^{-1} = (x + x^{-1}) - x \in R$, and thus $x \in R^\times$.

The definability statement follows immediately.
\end{proof}
\begin{lem}\label{P:binaryFormValuations}
Let $K$ be a field, $n \in \nat$, $v$ a valuation on $K$. Let $f(X_1, \ldots, X_n) \in \mathcal{O}_v[X_1, \ldots, X_n]$ be a homogeneous polynomial such that $\ovl{f}^v \in Kv[X_1, \ldots, X_n]$ has no non-trivial zeros.
For any elements $a_1, \ldots, a_n \in K$ we have that
\begin{displaymath}
v(f(a_1, \ldots, a_n)) = \deg(f)\min \lbrace v(a_i) \mid i \in \lbrace 1, \ldots, n \rbrace \rbrace
\end{displaymath}
\end{lem}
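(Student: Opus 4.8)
The plan is to reduce to a situation where the residue of $f$ at the relevant tuple is visibly nonzero, by rescaling the arguments so that they lie in $\mc{O}_v$ while keeping the tuple non-trivial, and then to invoke that reduction modulo $\mf{m}_v$ is a ring homomorphism on $\mc{O}_v$.

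First I would dispose of the degenerate cases. If all $a_i$ vanish, the statement is trivial (both sides are $+\infty$, at least once $\deg f \geq 1$; the case $\deg f = 0$ is also immediate, since then the hypothesis forces $f$ to be a unit of $\mc{O}_v$). So assume that not all $a_i$ are zero, and set $\gamma = \min\lbrace v(a_i) \mid 1 \leq i \leq n\rbrace$, attained at some index $j$, so $v(a_j) = \gamma$ and $v(a_i) \geq \gamma$ for all $i$.

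Next comes the rescaling. Put $b_i = a_i a_j^{-1}$ for $1 \leq i \leq n$; then $v(b_i) = v(a_i) - \gamma \geq 0$, so $b_i \in \mc{O}_v$, while $b_j = 1$. Since $f$ is homogeneous of degree $d := \deg f$, we have $f(a_1, \ldots, a_n) = a_j^{d} f(b_1, \ldots, b_n)$, and therefore
\[ v(f(a_1, \ldots, a_n)) = d\gamma + v(f(b_1, \ldots, b_n)). \]
Thus it suffices to prove that $v(f(b_1, \ldots, b_n)) = 0$. Since $f \in \mc{O}_v[X_1, \ldots, X_n]$ and each $b_i \in \mc{O}_v$, the value $f(b_1, \ldots, b_n)$ lies in $\mc{O}_v$, and reducing modulo $\mf{m}_v$ yields $\overline{f(b_1, \ldots, b_n)}^v = \overline{f}^v(\overline{b_1}^v, \ldots, \overline{b_n}^v)$ in $Kv$. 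The tuple $(\overline{b_1}^v, \ldots, \overline{b_n}^v)$ is non-trivial, its $j$-th coordinate being $\overline{b_j}^v = 1 \neq 0$, so by the hypothesis that $\overline{f}^v$ has no non-trivial zero over $Kv$ we get $\overline{f}^v(\overline{b_1}^v, \ldots, \overline{b_n}^v) \neq 0$, i.e.\ $f(b_1, \ldots, b_n) \notin \mf{m}_v$, hence $v(f(b_1, \ldots, b_n)) = 0$.

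There is no serious obstacle here: this is a routine valuation-theoretic computation. The only point that needs a little care is that the residue map respects the ring operations only on $\mc{O}_v$, so the rescaling step — which moves the arguments into $\mc{O}_v$ without collapsing the tuple to $0$ — must be carried out before passing to residues. It is worth noting that the argument uses nothing about the value group being $\zz$: the element $a_j$ plays the role that a uniformizer would play in the discretely valued case, so the lemma holds for an arbitrary valuation $v$.
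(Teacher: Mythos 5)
Your proof is correct and follows essentially the same route as the paper's: rescale so that the arguments lie in $\mc{O}_v$ with minimal valuation equal to $0$, use homogeneity to extract the factor $\deg(f)\cdot\gamma$, and then observe that if $f$ evaluated at the rescaled tuple fell in $\mf{m}_v$ its residue would give a non-trivial zero of $\ovl{f}^v$. The only cosmetic difference is that you name the specific scaling element $a_j$ (a coordinate of minimal value), whereas the paper just asserts the WLOG reduction; both are the same argument.
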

\begin{proof}
If $a_1 = \ldots = a_n = 0$ there is nothing to show, so we may suppose that this is not the case.
The validity of the statement is not affected if $(a_1, \ldots, a_n)$ is scaled by an element of $K^\times$, so we may assume without loss of generality that $\min_{i=1}^n v(a_i) = 0$; we need to show that $v(f(a_1, \ldots, a_n)) = 0$. If not, then we would have $\ovl{f}^v(\overline{a_1}^v, \ldots, \overline{a_n}^v) = \overline{f(a_1, \ldots, a_n)}^v = 0$ in $Kv$, contradicting the assumption that $\ovl{f}^v$ has no non-trivial zeros.
\end{proof}
\begin{prop}\label{C:binaryFormValuations}
Let $K$ be a field and let $S$ be a finite set of valuations on $K$.
Let $R = \bigcap_{v \in S} \mc{O}_v$.
Suppose that $Kv$ is not algebraically closed for all $v \in S$.
For each $n \in \nat^+$, there exists a polynomial $G \in K[X_1, \ldots, X_n]$ such that, for all $x\in K^n$, we have 
$G(x) \in R$ if and only if $x \in R^n$.
In particular, if $R$ is $\exists_m$-$\Lar(K)$-definable for some $m \in \nat$, then also $R^n$ is $\exists_m$-$\Lar(K)$- definable.
\end{prop}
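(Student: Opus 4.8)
The plan is to produce $G$ as a single homogeneous polynomial, with coefficients in $R$, whose reduction modulo each valuation in $S$ is a form without non-trivial zeros; then \Cref{P:binaryFormValuations} will control $v(G(x))$ for every $v \in S$.

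First I would reduce to the case where the valuation rings $\mc{O}_v$ with $v \in S$ are pairwise incomparable: if $\mc{O}_v \subseteq \mc{O}_{v'}$ for two distinct $v, v' \in S$, then $\mc{O}_v \cap \mc{O}_{v'} = \mc{O}_v$, so $v'$ may be deleted from $S$ without changing $R$. After finitely many deletions we obtain a set $\lbrace v_1, \ldots, v_r \rbrace$ of pairwise incomparable valuations with $R = \bigcap_{i=1}^r \mc{O}_{v_i}$; if $r = 0$, then $R = K$ and $G = X_1$ works, so assume $r \geq 1$. Next, for each $i$, since $Kv_i$ is not algebraically closed, there is a homogeneous polynomial in $n$ variables over $Kv_i$ with no non-trivial zero: picking $p \in Kv_i[T]$ irreducible of degree $d \geq 2$, the norm form of the field extension $Kv_i[T]/(p)$ is such a polynomial in $d$ variables, and one passes to $n$ variables by substituting zeroes for the surplus variables (when $n \leq d$) or by iterating the norm-form construction (when $n > d$). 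Raising these forms to suitable powers, I may assume that for each $i$ there is a homogeneous $\bar h_i \in Kv_i[X_1, \ldots, X_n]$ with no non-trivial zero, all of a common degree $D$.

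The key input is that the finite intersection $R = \bigcap_{i=1}^r \mc{O}_{v_i}$ of pairwise incomparable valuation rings is a semilocal domain whose maximal ideals are exactly the ideals $\mf{m}_{v_i} \cap R$, with $R_{\mf{m}_{v_i} \cap R} = \mc{O}_{v_i}$ and therefore $R/(\mf{m}_{v_i} \cap R) \cong \mc{O}_{v_i}/\mf{m}_{v_i} = Kv_i$; this is a standard fact about intersections of valuation rings (Pr\"ufer domains). Since distinct maximal ideals are comaximal, the Chinese Remainder Theorem gives a surjection $R \twoheadrightarrow \prod_{i=1}^r Kv_i$, and applying it coefficientwise I obtain a homogeneous $G \in R[X_1, \ldots, X_n]$ of degree $D$ with $\ovl{G}^{v_i} = \bar h_i$ for every $i$.

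It remains to verify the equivalence. If $x \in R^n$, then since $G \in R[X_1, \ldots, X_n] \subseteq \mc{O}_{v_i}[X_1, \ldots, X_n]$ for each $i$, we get $G(x) \in \mc{O}_{v_i}$ for every $i$, i.e.\ $G(x) \in R$. If $x \notin R^n$, choose $i$ and $j$ with $v_i(x_j) < 0$; then $\min_k v_i(x_k) < 0$, and since $\ovl{G}^{v_i} = \bar h_i$ has no non-trivial zero, \Cref{P:binaryFormValuations} gives $v_i(G(x)) = D \cdot \min_k v_i(x_k) < 0$, so $G(x) \notin \mc{O}_{v_i}$ and hence $G(x) \notin R$. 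Finally, if $R$ is defined by the $\exists_m$-$\Lar(K)$-formula $\exists Z_1 \cdots Z_m\, \theta(Y, Z_1, \ldots, Z_m)$ with $\theta$ quantifier-free, then $R^n$ is defined by $\exists Z_1 \cdots Z_m\, \theta(G(X_1, \ldots, X_n), Z_1, \ldots, Z_m)$, which is again an $\exists_m$-$\Lar(K)$-formula. I expect the main obstacle to be the structural fact just used --- semilocality of $R$ after the reduction, identification of its residue fields with the $Kv_i$, and the ensuing Chinese Remainder surjection --- as this is exactly what lets the local forms $\bar h_i$ be amalgamated into a single $G$ with coefficients in $R$; a naive sum or product of lifts of the $\bar h_i$ does not work, because a form tailored to one valuation need not have coefficients lying in the valuation rings of the others.
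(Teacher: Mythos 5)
Your proof is correct, and it reaches the same goal by a visibly different construction, so a brief comparison is warranted.

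The paper's proof and yours both reduce to pairwise incomparable valuations, both rest on \Cref{P:binaryFormValuations}, and both ultimately rely on some form of weak approximation. But the way the form $G$ is manufactured differs. The paper works only with \emph{binary} forms: it picks, via the approximation theorem in \autocite{Eng05}, monic polynomials $f_v \in R[X]$ with $\ovl{f_v}^v$ irreducible of degree $\geq 2$, homogenises them, and then takes an \emph{additive} combination $F(X,Y) = \sum_{v\in S}\alpha_v f_v^\ast(X,Y)^{d_v}$ with approximating coefficients $\alpha_v\in R$ satisfying $v(\alpha_v)=0$, $w(\alpha_v)>0$ for $w\neq v$; the valuation of $F(x,y)$ is then read off term by term, with the $v$-term dominating at $v$. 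The $n$-ary form is obtained by iterating $G_i = F(G_{i-1},X_i)$, which sidesteps any need to build $n$-ary forms over residue fields. You instead build $n$-ary forms $\bar h_i$ over each $Kv_i$ with no non-trivial zero (iterating norm forms as needed) and \emph{lift coefficientwise} using the surjection $R\twoheadrightarrow\prod_i Kv_i$ afforded by the semilocality of $R$ and the Chinese Remainder Theorem. This correctly produces $G\in R[X_1,\ldots,X_n]$ with $\ovl{G}^{v_i}=\bar h_i$, and the rest of your verification is sound. The trade-off: your route is cleaner conceptually (one CRT lift, one application of \Cref{P:binaryFormValuations}), but it imports the structure theory of finite intersections of valuation rings (the identification of $\operatorname{MaxSpec}R$ with $\{\mf{m}_{v_i}\cap R\}$, localizations, and residue fields) — which you rightly flag as the crux and which the paper avoids by using only the approximation statement directly; the paper's binary-form iteration also keeps the construction entirely over $R$, never needing separate forms in $n$ variables over each $Kv_i$.
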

\begin{proof}
By replacing $S$ with an appropriate subset if necessary, we may assume that $\mc{O}_v \not\subseteq \mc{O}_w$ for any two distinct $v, w \in S$.

By the assumption on the residue fields and a version of Weak Approximation \autocite[Theorem 3.2.7.(3)]{Eng05}, we can find for each $v \in S$ a monic polynomial $f_v \in R[X]$ such that its residue $\overline{f_v}^v$ is of degree at least $2$ and irreducible over $Kv$.
Let $d = \prod_{v \in S} \deg(f_v)$ and $d_v = d/\deg(f_v)$ for each $v \in S$.
Denote by $f_v^\ast \in R[X, Y]$ the homogenisation of $f_v$, and observe that $\overline{f_v^\ast}^v$ has no non-trivial zeros over $Kv$.
Finally, again invoking \autocite[Theorem 3.2.7.(3)]{Eng05}, fix for each $v \in S$ an element $\alpha_v \in R$ such that $v(\alpha_v) = 0$ and $w(\alpha_v) > 0$ for all $w \in S \setminus \lbrace v \rbrace$.
We now define
$$ F(X, Y) = \sum_{v \in S} \alpha_v f_v^\ast(X, Y)^{d_v} \in R[X, Y],$$
which is homogeneous of degree $d$.
Consider $v \in S$.
We claim that for all $x, y \in K$ we have
$$ v(F(x, y)) = d\min \lbrace v(x), v(y) \rbrace $$
To see, this, note that by \Cref{P:binaryFormValuations} we have
$$ v(\alpha_v f_v^\ast(x, y)^{d_v}) = 0 + (\deg(f_v)d_v)\min \lbrace v(x), v(y) \rbrace = d\min \lbrace v(x), v(y) \rbrace, $$
whereas for $w \in S \setminus \lbrace v \rbrace$ we have
$$ v(\alpha_w f_w^\ast(x, y)^{d_w}) \geq v(\alpha_w) + (\deg(f_w)d_w)\min \lbrace v(x), v(y) \rbrace > d\min \lbrace v(x), v(y) \rbrace,$$
from which the desired statement follows.
Since this holds for all $v \in S$, we obtain that, for all $x, y \in K$, one has
$$ F(x, y) \in R \Leftrightarrow x \in R \text{ and } y \in R.$$

We can now inductively for $i \geq 1$ define polynomials $G_i(X_1, \ldots, X_i)$
by setting $G_1(X_1) = X_1$ and $G_i(X_1, \ldots, X_i) = F(G_{i-1}(X_1, \ldots, X_{i-1}), X_i)$.
We see that, for $x_1, \ldots, x_n \in K$, we have
$$ G_n(x_1, \ldots, x_n) \in R \Leftrightarrow x_1, \ldots, x_n \in R, $$
so $G$ is as desired.
The definability statement follows immediately.
\end{proof}

We conclude this section with a brief discussion of a uniform existential definability result essentially due to Poonen and Koenigsmann \autocite{Poo09, Koe16}, which will play a central role in the proof of the main theorem.
We recall from \autocite[Section 5]{DaansGlobal} the following definition.
For a field $K$ and a quaternion algebra $Q$ over $K$, we define the following subset of $K$:
\begin{displaymath}
S(Q) = \lbrace \Trd(\alpha) \mid \alpha \in Q \setminus K, \Nrd(x) = 1 \rbrace.
\end{displaymath}
\begin{stel}\label{T:Poonensets}
Let $Q$ be a nonreal quaternion algebra over a global field $K$.
Then
$$\bigcap_{v \in \Delta Q} \mc{O}_v = \lbrace x + y \mid x, y \in S(Q) \rbrace.$$
\end{stel}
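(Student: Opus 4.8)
The plan is to prove the two inclusions separately, reducing the substantial one to a purely local statement that gets assembled via weak approximation and the Hasse principle. The starting point is a reformulation of membership in $S(Q)$. If $Q$ is split, then $\Delta Q = \emptyset$ by \Cref{T:ABHN} while $S(Q) = K$ (as in the proof of \Cref{P:Quatsplitbyquadratic}, $\mbb{M}_2(K)$ contains non-scalar matrices with any prescribed reduced trace and reduced norm $1$), so both sides of the claimed identity equal $K$; assume henceforth that $Q$ is non-split, so $\Delta Q$ is finite, nonempty and of even cardinality. By \Cref{P:Quatsplitbyquadratic} applied with $d = 1$, an element $c \in K$ lies in $S(Q)$ precisely when $Q$ is split over the splitting field $L_c$ of $X^2 - cX + 1$; since $Q$ is non-split, this forces $L_c/K$ to be a quadratic field extension. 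The algebra $Q \otimes_K L_c$ is again nonreal, so by \Cref{T:ABHN} it is split if and only if $Q$ is split over $(L_c)_w$ for every $\zz$-valuation $w$ on $L_c$; and since a quadratic extension of a local field splits every quaternion algebra over it (\Cref{P:quatsplit}), this happens exactly when no $v \in \Delta Q$ splits completely in $L_c$, i.e.~when $X^2 - cX + 1$ is irreducible over $K_v$ for every $v \in \Delta Q$. As $\Delta Q \neq \emptyset$, irreducibility over a single such $K_v$ already forces irreducibility over $K$, and we obtain the characterisation
\begin{equation}\label{eq:Poonenstar}
c \in S(Q) \quad\Longleftrightarrow\quad X^2 - cX + 1 \text{ is irreducible over } K_v \text{ for every } v \in \Delta Q .
\end{equation}

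For the inclusion $\{x + y \mid x, y \in S(Q)\} \subseteq \bigcap_{v \in \Delta Q}\mc O_v$, fix $v \in \Delta Q$ and $c \in S(Q)$, and suppose towards a contradiction that $v(c) < 0$. Substituting $X = cW$ in $X^2 - cX + 1$ and dividing by $c^2$ gives $W^2 - W + c^{-2}$, which reduces modulo $\mf m_v$ to $W(W - 1)$; applying Hensel's Lemma (\Cref{T:Hensel}) at $W = 0$ and at $W = 1$ produces two roots in $K_v$, so $X^2 - cX + 1$ splits over $K_v$, contradicting \eqref{eq:Poonenstar}. Hence $S(Q) \subseteq \mc O_v$, and since $\mc O_v$ is a ring, $S(Q) + S(Q) \subseteq \mc O_v$; intersecting over $v \in \Delta Q$ gives the inclusion.

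For the reverse inclusion, fix $t \in \bigcap_{v \in \Delta Q}\mc O_v$; I want to find $c \in K$ with $c \in S(Q)$ and $t - c \in S(Q)$, so that $t = c + (t - c)$. By \eqref{eq:Poonenstar} it is enough that $X^2 - cX + 1$ and $X^2 - (t - c)X + 1$ both be irreducible over $K_v$ for every $v \in \Delta Q$. I would proceed in three steps. First, solve the problem locally: for each $v \in \Delta Q$, produce $c_v \in \mc O_v$ such that $X^2 - c_v X + 1$ and $X^2 - (t - c_v)X + 1$ are both irreducible over $K_v$. Second, note that for fixed $v$ these are open conditions on $c$ in the $v$-adic topology, since the set of $c \in K_v$ for which $X^2 - cX + 1$ has a root in $K_v$ equals $\{y + y^{-1} \mid y \in K_v^\times\}$, and this set is closed in $K_v$ because $|y + y^{-1}|$ is large whenever $|y|$ is large or small, so it meets every bounded region of $K_v$ in a compact set. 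Third, by Weak Approximation (\Cref{T:WAT}) together with the density of $K$ in each completion $K_v$, the field $K$ is dense in $\prod_{v \in \Delta Q} K_v$; choosing $c \in K$ sufficiently close to $c_v$ for every $v \in \Delta Q$ simultaneously places $c$ in all of the open loci from the second step, so $c \in S(Q)$ and $t - c \in S(Q)$ by \eqref{eq:Poonenstar}, and $t = c + (t - c)$ is of the required form.

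The real obstacle is the local solvability in the first step — equivalently, the assertion that $\mc O = S(D) + S(D)$ for the division quaternion algebra $D$ over an arbitrary local field $F$ with valuation ring $\mc O$. Via the discriminant of $X^2 - cX + 1$, membership in $S(D)$ unwinds to a square-class condition on $c^2 - 4$ when $F$ is non-dyadic and to an Artin--Schreier condition when $F$ is dyadic, and one must show that every $t \in \mc O$ is a sum of two such $c$. For residue fields $\ff_q$ that are not too small this follows from a character-sum (Weil) estimate, taking $c$ to be a unit with a suitable residue; the remaining finitely many cases (already $q = 5$ shows that the residue of $c$ alone does not decide the matter) require an explicit construction in which one prescribes the valuation of $c \mp 2$, respectively $(t - c) \mp 2$, to be $1$, forcing the relevant discriminant to have odd valuation and hence to be a non-square, with a similar device in the dyadic case. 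This local case analysis is essentially the computation of Poonen \autocite{Poo09} and Koenigsmann \autocite{Koe16}; see also \autocite[Section~5]{DaansGlobal}.
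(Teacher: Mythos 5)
The paper does not prove this theorem itself; it simply cites Dittmann's Proposition~2.9, crediting Koenigsmann and Poonen for the case $K=\qq$. Your argument --- characterising $S(Q)$ via \Cref{P:Quatsplitbyquadratic} with $d=1$ and the Albert--Brauer--Hasse--Noether theorem, establishing $S(Q)\subseteq\mc O_v$ by the Hensel argument after substituting $X=cW$, and assembling local solutions into a global one via openness of the irreducibility locus and weak approximation --- is a correct exposition of the reduction that underlies exactly those references, and you correctly identify and defer the genuinely hard step (the local equality $\mc O = S(D)+S(D)$ for a division quaternion algebra $D$ over a local field) to the same literature the paper cites.
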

\begin{proof}
See \autocite[Proposition 2.9]{Dit17}. 
In the case $K = \qq$, the argument goes back to \autocite[Proposition 6]{Koe16}, using ideas already developed in \autocite{Poo09}.
\end{proof}
\begin{prop}\label{C:EdefinabilitySemilocalRingsGlobalFields}
Let $K$ be a global field.
There exists an $\exists_6$-$\Lar(K)$-formula $\varphi(X, A, B)$ such that, for all $a, b \in K$ with $(1+4a)b \neq 0$ and such that $[a, b)_K$ is nonreal, we have
$$ \bigcap_{v \in \Delta [a, b)_K} \mc{O}_v = \lbrace x \in K \mid K \models \varphi(x, a, b) \rbrace.$$
\end{prop}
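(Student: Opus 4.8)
The plan is to combine \Cref{T:Poonensets} with \Cref{P:Quatsplitbyquadratic} and a careful counting of quantifiers, exactly as in the passage from \Cref{lem:valE3} to \Cref{P:valE3}, but now keeping track that all parameters can be packaged into the two symbols $A, B$. By \Cref{T:Poonensets}, for a nonreal quaternion algebra $Q = [a, b)_K$ we have $\bigcap_{v \in \Delta Q} \mc{O}_v = \{x + y \mid x, y \in S(Q)\}$, so it suffices to produce an $\exists$-$\Lar(K)$-formula defining $S(Q)$ uniformly in $a, b$, with few enough quantifiers that adding one more existential quantifier (to express ``$x = x_1 + x_2$ with $x_1, x_2 \in S(Q)$'') and applying \Cref{cor:finGenOverPerf} lands us at $6$.

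First I would unwind the definition $S(Q) = \{\Trd(\alpha) \mid \alpha \in Q\setminus K,\ \Nrd(\alpha) = 1\}$. By \Cref{P:Quatsplitbyquadratic} (with $c$ the trace and $d = 1$), an element $c \in K$ lies in $S(Q)$ if and only if there exist $x, y, z \in K$, not all of $2x - c, y, z$ zero, with
\begin{displaymath}
x^2 + x(c - 2x) - a(c - 2x)^2 - b(y^2 + yz - az^2) = 1.
\end{displaymath}
This is an $\exists_3$-$\Lar(K)$-condition on $c$ with parameters $a, b$; the ``not all zero'' disjunction is absorbed into the quantifier-free matrix (as in \Cref{P:existdef}, over a global field $K$, which is not algebraically closed, one may even keep it to a single polynomial equation). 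Call this formula $\psi(C, A, B)$, an $\exists_3$-$\Lar(K)$-formula, so that $S([a,b)_K) = \{c \in K \mid K \models \psi(c, a, b)\}$.

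Next, $\bigcap_{v \in \Delta Q}\mc{O}_v = \{x \mid \exists c : \psi(c, A, B) \wedge \psi(X - c, A, B)\}$ evaluated at $A = a, B = b$. Naively this is an $\exists$-$\Lar(K)$-formula with $1 + 3 + 3 = 7$ quantifiers. To get down to $6$, I would invoke \Cref{cor:finGenOverPerf}: a global field is finitely generated over a perfect subfield, the set $\{c \in K \mid K \models \psi(c, a, b)\} = S(Q)$ is $\exists_3$-definable, and so is its ``reflection'' $\{c \mid X - c \in S(Q)\}$ for each fixed value of $X$ — but one has to be slightly careful, since here $X$ is a free variable, not a parameter. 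The cleaner route is: $\{(x, c) \mid c \in S(Q)\}$ and $\{(x, c) \mid x - c \in S(Q)\}$ are both $\exists_3$-$\Lar(K)$-definable subsets of $K^2$, hence by \Cref{cor:finGenOverPerf} their intersection is $\exists_5$-$\Lar(K)$-definable in $K^2$; projecting onto the first coordinate (which introduces the single quantifier over $c$) gives an $\exists_6$-$\Lar(K)$-formula $\varphi(X, A, B)$ defining $\bigcap_{v \in \Delta[a,b)_K}\mc{O}_v$, and all dependence on $a, b$ has been carried along uniformly in the parameter slots $A, B$.

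The main obstacle is the bookkeeping around \Cref{cor:finGenOverPerf}: that lemma merges two $\exists$-definable sets $D_1, D_2 \subseteq K^n$ living in the \emph{same} affine space, so one must first view both $S(Q)$-conditions as conditions on a common pair of variables $(X, C)$ — the value whose membership we are testing and the auxiliary summand — before intersecting, and only afterwards existentially quantify out $C$. One should also double-check that \Cref{cor:finGenOverPerf} applies with parameters (i.e.\ to $\Lar(K)$-definability, not just $\Lar$-definability), which it does since a parameter is just a $0$-ary symbol and the statement is about $\Lar(K)$-formulas throughout; and that the nonreality hypothesis on $[a,b)_K$ is exactly what \Cref{T:Poonensets} requires. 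Everything else is the routine translation already carried out in \Cref{P:Quatsplitbyquadratic} and \Cref{P:existdef}.
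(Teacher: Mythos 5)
Your argument is essentially the paper's own proof: use \Cref{P:Quatsplitbyquadratic} with $d = 1$ to express membership in $S([a,b)_K)$ as an $\exists_3$-condition, decompose $\bigcap_{v \in \Delta Q}\mc{O}_v$ as a sumset of two copies of $S(Q)$ via \Cref{T:Poonensets}, merge the two $\exists_3$-conditions via \Cref{cor:finGenOverPerf} to get $\exists_5$, and spend one more quantifier on the auxiliary summand, landing at $\exists_6$. The only (cosmetic) difference is that the paper works with subsets of $K^4$ in the coordinates $(x, y, a, b)$, so the uniformity in $A, B$ is built in from the start, whereas you treat $a, b$ as parameters and work in $K^2$ with coordinates $(x, c)$, which strictly speaking requires you to argue separately that the $\exists_5$-formula produced by \Cref{cor:finGenOverPerf} can be taken uniform in $a, b$ — a point you do flag, but which is cleaner to sidestep by adopting the paper's $K^4$ formulation.
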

\begin{proof}
In view of \Cref{P:Quatsplitbyquadratic} we have that $$\lbrace (x, a, b) \in K^3 \mid (1+4a)b \neq 0 \text{ and } x \in S([a, b)_K) \rbrace$$
is $\exists_3$-$\Lar$-definable.
Furthermore, by \Cref{T:Poonensets}, we have for $a, b \in K$ with $(1+4a)b \neq 0$ and $[a, b)_K$ nonreal that
$$ x \in \bigcap_{v \in \Delta[a, b)_K} \mc{O}_v \enspace\Leftrightarrow\enspace \exists y \in K : y \in S([a, b)_K) \enspace\text{and}\enspace x - y \in S([a, b)_K). $$
By applying \Cref{cor:finGenOverPerf} with
\begin{align*}
D_1 &= \lbrace (x,y,a,b) \in K^4 \mid (1+4a)b \neq 0 \text{ and } y \in S([a, b)_K) \rbrace \quad\text{and} \\\quad D_2 &= \lbrace (x,y,a,b) \in K^4 \mid (1+4a)b \neq 0 \text{ and } x-y \in S([a, b)_K) \rbrace
\end{align*}
and using that $D_1$ and $D_2$ are both $\exists_3$-$\Lar(K)$-definable, we obtain the desired result.
\end{proof}

\section{Universally defining rings of $S$-integers}\label{sect:ZinQQO}
We now work our way towards the universal definability of rings of $S$-integers in global fields with $10$ quantifiers (\Cref{T:nicomainthmQO}).

\begin{lem}\label{P:EtoAgeneral}
Let $V$ be a non-empty set of valuations on a field $K$, $n \in \nat$.
The set $\bigcup_{v \in V} \mathfrak{m}_v$ has an $\exists_n$-$\Lar(K)$-definition in $K$ if and only if $\bigcap_{v \in V} \mathcal{O}_v$ has an $\forall_n$-$\Lar(K)$-definition in $K$.
\end{lem}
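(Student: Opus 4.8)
The plan is to prove both implications at once by exploiting the duality between existential and universal formulas under negation, together with the set-theoretic identity that relates the union of maximal ideals to the intersection of valuation rings. The key observation is that for any valuation $v$ on $K$ and any $x \in K^\times$, we have $x \in \mathfrak{m}_v$ if and only if $x^{-1} \notin \mathcal{O}_v$; and $0 \in \mathfrak{m}_v$ always, while $0 \in \mathcal{O}_v$ always. Hence, writing $M = \bigcup_{v \in V} \mathfrak{m}_v$ and $R = \bigcap_{v \in V} \mathcal{O}_v$, one checks directly that for $x \in K^\times$,
$$ x^{-1} \in M \iff \exists v \in V : x^{-1} \in \mathfrak{m}_v \iff \exists v \in V : x \notin \mathcal{O}_v \iff x \notin R. $$
Thus $K \setminus R = \lbrace 0 \rbrace^{c} \cap \lbrace x \in K \mid x^{-1} \in M \rbrace$ once one is slightly careful about the element $0$: since $0 \in R$ and $0 \notin K \setminus R$, whereas $0^{-1} = 0 \in M$, the naive translation $x \mapsto x^{-1}$ fails only at $0$, and this single point must be patched.

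First I would set up the correspondence cleanly. Suppose $M$ has an $\exists_n$-$\Lar(K)$-definition, say by the formula $\exists Y_1, \ldots, Y_n\, \psi(X, Y_1, \ldots, Y_n)$ with $\psi$ quantifier-free. Using \Cref{P:LringvsLfield} (or rather its underlying "clearing denominators" mechanism, which lets us freely use the inverse operation inside quantifier-free formulas at the cost of no extra quantifiers), the condition "$x \neq 0$ and $x^{-1} \in M$" is expressible by an $\exists_n$-$\Lar(K)$-formula $\theta(X)$: namely $\exists Y_1, \ldots, Y_n\, \big(\neg(X \doteq 0) \wedge \psi(X^{-1}, Y_1, \ldots, Y_n)\big)$, which after clearing denominators becomes a genuine $\exists_n$-$\Lar(K)$-formula. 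By the displayed equivalence above, $\theta$ defines exactly $K \setminus R$ (here we use that $0 \in R$, so $0$ correctly fails $\theta$, and that every $x \in K^\times \setminus R$ satisfies it). Negating $\theta$ and pushing the negation through the quantifiers turns $\exists_n$ into $\forall_n$, giving a $\forall_n$-$\Lar(K)$-definition of $R$. The converse implication is entirely symmetric: starting from a $\forall_n$-definition $\forall Y_1, \ldots, Y_n\, \psi(X, \bar Y)$ of $R$, its negation is an $\exists_n$-formula defining $K \setminus R$; then the map $x \mapsto x^{-1}$ together with the patch at $0$ (note $0 \in M$ always, and $0 \in K \setminus R$ is false, so again the element $0$ needs individual attention — but it lands on the correct side once we observe $0 \in M$ and the formula $X \doteq 0 \vee (\neg(X\doteq 0) \wedge (X^{-1} \in K\setminus R))$ is still $\exists_n$) yields an $\exists_n$-definition of $M$.

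The only genuinely delicate point — and hence the step I expect to require the most care — is the bookkeeping around the element $0$ and around preserving the quantifier count exactly. One must verify that the extra atomic conjuncts ($\neg(X \doteq 0)$, or the disjunct $X \doteq 0$) and the substitution $X \mapsto X^{-1}$ do not introduce any new quantifiers: this is precisely what \Cref{P:LringvsLfield} guarantees, since substituting a term involving ${}^{-1}$ into a quantifier-free $\La_{\fie}$-formula and then re-expressing it in $\Lar$ costs nothing. Because $V$ is non-empty, $M$ is non-empty (it contains $0$) and $R$ is a proper subring precisely when some $\mathcal{O}_v \neq K$, but no such genericity hypothesis is needed: the equivalence of the two displayed definability statements holds unconditionally, and the argument above never uses finiteness of $V$. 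Conversely, if $V$ were empty the statement would be vacuous on both sides in a trivial way, but the non-emptiness hypothesis is what makes both $M$ and $R$ well-behaved non-trivial sets, so I would simply record that the non-emptiness is used only to ensure the two sides are the "expected" sets and otherwise the proof is a direct translation.
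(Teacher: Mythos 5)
Your proof is correct and follows essentially the same route as the paper: both arguments hinge on the set-theoretic identity $\bigcap_{v\in V}\mathcal{O}_v = \bigl(K\setminus(\bigcup_{v\in V}\mathfrak{m}_v)^{-1}\bigr)\cup\{0\}$ (which you derive via the equivalence $x^{-1}\in\mathfrak{m}_v\Leftrightarrow x\notin\mathcal{O}_v$ for $x\neq 0$), on negating the defining formula to swap $\exists_n$ and $\forall_n$, and on using \Cref{P:LringvsLfield}/\Cref{C:LringvsLfield} to eliminate the inverse symbol without adding quantifiers. The paper states the identity and leaves the formula manipulations implicit, whereas you spell out the patching at $0$ and the explicit shape of the formulas, but these are presentational differences only.
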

\begin{proof}
By \Cref{C:LringvsLfield} it suffices to show that $\bigcup_{v \in V} \mathfrak{m}_v$ has an $\exists_n$-$\La_{\fie}(K)$-definition in $K$ if and only if $\bigcap_{v \in V} \mathcal{O}_v$ has an $\forall_n$-$\La_{\fie}(K)$-definition in $K$.
This in turn follows from the observation
\begin{displaymath}
\bigcap_{v \in V} \mathcal{O}_v = \left(K \setminus \left(\bigcup_{v \in V} \mathfrak{m}_v\right)^{-1}\right) \cup \lbrace 0 \rbrace.
\end{displaymath}
\end{proof}

Following \autocite[Section 6]{DaansGlobal}, for a global field $K$, a non-empty finite set $S \subseteq \mc{V}_K$ and $u \in \bigcap_{v \in S} \mc{O}_v^\times$, define the set
$$ \Phi_u^S = \left\lbrace (a, b) \in K^2 \enspace\middle|\enspace b \in \bigcap_{v \in S}\mc{O}_v^\times, a \equiv u \bmod \prod_{v \in S} \mf{m}_v \right\rbrace.$$
\begin{lem}\label{L:XE3}
Let $K$ be a global field, $S \subseteq \mc{V}_K$ a non-empty finite set and $u \in \bigcap_{v \in S}\mc{O}_v^\times$.
The set $\Phi_u^S$ has an $\exists_3$-$\Lar(K)$-definition in $K^2$.
\end{lem}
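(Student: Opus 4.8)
The plan is to translate both conditions defining $\Phi_u^S$ into membership conditions for the semilocal ring $R = \bigcap_{v\in S}\mc{O}_v$, and then quote \Cref{P:valE3} together with \Cref{C:binaryFormValuations}. Observe first that for every $v \in S$ the residue field $Kv$ is finite, hence not algebraically closed, so \Cref{C:binaryFormValuations} is applicable to $R$. Using Weak Approximation (\Cref{T:WAT}) I would fix an element $\pi \in K^\times$ with $v(\pi) = 1$ for all $v \in S$.

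Next I would rewrite the conditions. For $b \in K^\times$ one has $b \in \bigcap_{v\in S}\mc{O}_v^\times$ exactly when $v(b) = 0$ for all $v \in S$, which, since $v(b^{-1}) = -v(b)$, is equivalent to $b \in R$ and $b^{-1}\in R$. The congruence $a \equiv u \bmod \prod_{v\in S}\mf{m}_v$ means that $a - u$ lies in the ideal $\prod_{v\in S}\mf{m}_v$ of $R$, which (the $\mf{m}_v$ for $v \in S$ being pairwise comaximal in $R$) equals $\bigcap_{v\in S}\mf{m}_v$; thus the congruence says $v(a - u)\geq 1$ for all $v \in S$, equivalently $(a-u)\pi^{-1}\in R$. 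Hence
\[ \Phi_u^S = \lbrace (a,b)\in K^2 \mid b\neq 0 \text{ and } \lbrace b,\ b^{-1},\ (a-u)\pi^{-1}\rbrace \subseteq R \rbrace. \]
Applying \Cref{C:binaryFormValuations} with $n=3$ gives a polynomial $G\in K[X_1,X_2,X_3]$ with $G(x)\in R \Leftrightarrow x\in R^3$ for all $x \in K^3$, so $\Phi_u^S$ is cut out by the single condition $b\neq 0 \wedge G(b, b^{-1}, (a-u)\pi^{-1})\in R$. Since $R$ is $\exists_3$-$\Lar(K)$-definable by \Cref{P:valE3}, substituting the rational expression $G(B, B^{-1}, (A-u)\pi^{-1})$ into a defining formula for $R$ and conjoining the quantifier-free clause $B\neq 0$ produces an $\exists_3$-$\La_{\fie}(K)$-formula defining $\Phi_u^S$; by \Cref{C:LringvsLfield} we may turn this into an $\exists_3$-$\Lar(K)$-formula.

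The mathematical content is essentially all imported from the preceding results, so the work here is bookkeeping, and I expect two points to require attention. First, the clause $b\neq 0$ must be retained explicitly: if $b=0$ then $b^{-1}=0\in R$, so $G(0,0,(a-u)\pi^{-1})\in R$ for every $a$ with $a \equiv u \bmod \prod_{v\in S}\mf{m}_v$, and dropping the clause would wrongly admit the pairs $(0, a)$ although $0\notin\bigcap_{v\in S}\mc{O}_v^\times$. Second, one has to be sure the quantifier count stays at $3$: the three existential quantifiers coming from \Cref{P:valE3} are used only once, on the single composite argument $G(B, B^{-1}, (A-u)\pi^{-1})$, and the remaining manipulations — building $G$ and clearing denominators via \Cref{C:LringvsLfield} — are quantifier-free.
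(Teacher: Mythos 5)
Your proof is correct and follows essentially the same route as the paper: fix a uniformiser $\pi$ for the valuations in $S$ by Weak Approximation, rewrite membership in $\Phi_u^S$ as a finite conjunction of conditions of the form ``element of $R := \bigcap_{v\in S}\mc{O}_v$,'' compress those into a single such condition via \Cref{C:binaryFormValuations}, and then quote \Cref{P:valE3} and \Cref{C:LringvsLfield}. The only cosmetic difference is that the paper encodes $b\in R^\times$ as the single condition $b+b^{-1}\in R$ using \Cref{P:definingValuationUnits} (so that \Cref{C:binaryFormValuations} is applied with $n=2$), whereas you use the pair $b,b^{-1}\in R$ and apply it with $n=3$; both give the same quantifier count. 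Your explicit remark that the clause $b\neq 0$ must be kept (since under the convention $0^{-1}=0$ the rational expressions degenerate harmlessly into $R$) is a sound point, and the same care is needed in the paper's version when formalising $\frac{b^2+1}{b}\in R$ in $\La_{\fie}$.
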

\begin{proof}
By Weak Approximation, we can find $\pi \in K^\times$ with $v(\pi) = 1$ for all $v \in S$.
%Also, by Weak Approximation and \Cref{pitchfork1}, we can find an element $d \in K^\times$ such that $v(d) = 0$ for all $v \in S$ and such that $X^2 - X - d^2$ is irreducible modulo $\mathfrak{m}_v$ for all $v \in S$.
We see that for $a, b \in K$ we have that
\begin{align*}
(a, b) \in \Phi_u^S \enspace&\Leftrightarrow\enspace b \in \bigcap_{v \in S} \mathcal{O}_v^\times \text{ and } \frac{a-u}{\pi} \in \bigcap_{v \in S} \mathcal{O}_v \\
&\Leftrightarrow\enspace \frac{b^2 + 1}{b}, \frac{a-u}{\pi} \in \bigcap_{v \in S}\mathcal{O}_v.
\end{align*}
where the second equivalence follows from \Cref{P:definingValuationUnits}.
By \Cref{P:valE3} $\bigcap_{v \in S}\mathcal{O}_v$ is $\exists_3$-$\Lar(K)$-definable, and then the desired result follows from \Cref{C:binaryFormValuations} (and \Cref{C:LringvsLfield}).
\end{proof}
\begin{lem}\label{lem:gcharnot2}
Let $(K, v)$ be a valued field and consider the rational function
\begin{equation*}\label{eq:gcharnot2}
g(X, Y) = \frac{16X^4}{1+4X^2} - \left(\frac{(Y-1)^2}{Y}\right)^2 \in K(X, Y).
\end{equation*}
Let $a, b \in K$ with $(1+4a^2)b \neq 0$.
We have the following:
\begin{enumerate}
\item\label{it:gcn2-1} If $1+4a^2, b \in \mc{O}_v^\times$, then $g(a, b) \in \mc{O}_v$.
\item\label{it:gcn2-2} If $v(1+4a^2) = 0$ and $v(b) \neq 0$, then $v(g(a, b)) = -2\lvert v(b) \rvert$.
\item\label{it:gcn2-3} If $v$ is henselian and non-dyadic, $X^2 - X - a^2$ is irreducible, and $g(a, b) \in \mc{O}_v$, then $1+4a^2, b \in \mc{O}_v^\times$.
\end{enumerate}
\end{lem}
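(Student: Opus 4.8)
The statement concerns the rational function
$$ g(X, Y) = \frac{16X^4}{1+4X^2} - \left(\frac{(Y-1)^2}{Y}\right)^2, $$
and I would prove the three items by a direct valuation-theoretic computation, treating each summand separately. Write $g(a,b) = h_1(a) - h_2(b)^2$ where $h_1(a) = \frac{16a^4}{1+4a^2}$ and $h_2(b) = \frac{(b-1)^2}{b} = b - 2 + b^{-1}$. The first observation is that $h_2(b) = b + b^{-1} - 2$, so by \Cref{P:definingValuationUnits} (applied to $\mc{O}_v$, which is integrally closed with fraction field $K$, at least after passing to the henselization when relevant — but actually one only needs the elementary computation $v(b+b^{-1}) = \min\{v(b), v(b^{-1})\} = -|v(b)|$ when $v(b)\neq 0$, and $v(b+b^{-1})\geq 0$ when $v(b)=0$), we get: if $b \in \mc{O}_v^\times$ then $h_2(b) \in \mc{O}_v$, while if $v(b) \neq 0$ then $v(h_2(b)) = -|v(b)| < 0$, hence $v(h_2(b)^2) = -2|v(b)|$.

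For $h_1(a) = \frac{16a^4}{1+4a^2}$: note $16a^4 = (4a^2)^2$ and $1 + 4a^2$, so $v(h_1(a)) = 2v(4a^2) - v(1+4a^2) = 2v(2) + 4v(a) - v(1+4a^2)$. When $1 + 4a^2 \in \mc{O}_v^\times$, i.e.\ $v(1+4a^2) = 0$, we have $v(a^2) = v((1+4a^2) - 1) \geq 0$ whenever $v$ is non-dyadic (since then $v(4a^2) = v(a^2)$ and $v(1)=0$ forces $v(4a^2)\geq 0$); more carefully, $v(1+4a^2)=0$ together with $v(1)=0$ gives $v(4a^2) \geq 0$ in all cases, so $v(16a^4) = 2v(4a^2) \geq 0$, hence $h_1(a) \in \mc{O}_v$. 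This, combined with $h_2(b) \in \mc{O}_v$ when $b \in \mc{O}_v^\times$, immediately yields \eqref{it:gcn2-1}.

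For \eqref{it:gcn2-2}: assume $v(1+4a^2) = 0$ and $v(b) \neq 0$. From the previous paragraph $h_1(a) \in \mc{O}_v$, so $v(h_1(a)) \geq 0$, whereas $v(h_2(b)^2) = -2|v(b)| < 0$. Hence $v(g(a,b)) = v(h_1(a) - h_2(b)^2) = \min\{v(h_1(a)), -2|v(b)|\} = -2|v(b)|$, as the two valuations are distinct. For \eqref{it:gcn2-3}, the contrapositive is the efficient route: suppose $v$ is henselian non-dyadic, $X^2 - X - a^2$ is irreducible over $K$ (equivalently over $K_v$-type completions, but we only use henselianity), and that \emph{not} both $1 + 4a^2, b$ lie in $\mc{O}_v^\times$; I must show $g(a,b) \notin \mc{O}_v$. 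There are essentially three sub-cases: (i) $v(b) \neq 0$ — then by \eqref{it:gcn2-2} (whose hypothesis $v(1+4a^2)=0$ I still need to secure, see below) $v(g(a,b)) < 0$; (ii) $v(b) = 0$ but $v(1+4a^2) \neq 0$; (iii) $b \notin \mc{O}_v$ in the sense $v(b) < 0$, already covered by (i).

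**The main obstacle.** The delicate point is case (ii), and more generally understanding how irreducibility of $X^2 - X - a^2$ over a henselian non-dyadic valued field constrains $v(1 + 4a^2)$ and $v(a)$. The key algebraic fact is that $X^2 - X - a^2$ has discriminant $1 + 4a^2$, so over a non-dyadic henselian field it has a root if and only if $1 + 4a^2$ is a square; by Hensel's Lemma (\Cref{T:Hensel}) this holds whenever $1 + 4a^2 \in \mc{O}_v^\times$ and $\overline{1+4a^2}^v$ is a nonzero square in $Kv$ — and in particular $X^2-X-a^2$ is \emph{reducible} whenever $v(a) > 0$ (then $1+4a^2 \equiv 1$, a square) and whenever $v(a) < 0$ and $v$ is non-dyadic (then $v(1+4a^2) = 2v(a)$ is even, write $1+4a^2 = (2a)^2(1 + (2a)^{-2})$ and $1 + (2a)^{-2} \equiv 1$ is a square unit, so $1+4a^2$ is a square). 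Hence irreducibility of $X^2 - X - a^2$ forces $v(a) = 0$, and then $v(1+4a^2) \geq 0$ with $\overline{1+4a^2}^v = \overline{1 + 4\overline{a}^{v2}}$; if this residue were $0$ then $v(1+4a^2) > 0$, and one checks that then $X^2 - X - a^2$ would still be reducible (over a non-dyadic henselian field, $v(\text{discriminant}) > 0$ with the polynomial monic separable mod nothing — actually here $\overline{X^2-X-a^2} = X^2 - X - \overline{a}^{v2}$ has discriminant $\equiv 0$, so it is $(X - \tfrac12)^2$ in $Kv$, a perfect square, so Hensel gives a root), contradiction. Therefore irreducibility forces $v(1+4a^2) = 0$ outright. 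This disposes of case (ii) — it simply cannot occur — and reduces \eqref{it:gcn2-3} to case (i), handled by \eqref{it:gcn2-2}. I would organize the write-up as: first the elementary facts about $h_2$; then $h_1$; then assemble \eqref{it:gcn2-1} and \eqref{it:gcn2-2}; and finally prove \eqref{it:gcn2-3} by the argument above, with the irreducibility-forces-$v(1+4a^2)=0$ lemma as the crux, citing \Cref{T:Hensel} for the reducibility implications.
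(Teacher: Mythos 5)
Your handling of \eqref{it:gcn2-1} and \eqref{it:gcn2-2} is essentially the paper's: decompose $g$ as $h_1(a) - h_2(b)^2$ and compute the valuations of the two summands separately. That part is fine.

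The proof of \eqref{it:gcn2-3}, however, has a genuine gap. Your ``crux'' lemma --- that irreducibility of $X^2 - X - a^2$ over a henselian non-dyadic valued field forces $v(1+4a^2) = 0$ --- is false. The step where you argue that $v(1+4a^2)>0$ would make the reduction $(X - \tfrac{1}{2})^2$ and ``Hensel gives a root'' does not work, because the derivative $2X-1$ vanishes at the double root $\tfrac{1}{2}$, so the hypothesis of \Cref{T:Hensel} ($v(f(a_0)) > 2v(f'(a_0))$) is never satisfied there. Concretely, take $K = \qq_5$ with $v = v_5$ and $a = 1$: then $1 + 4a^2 = 5$, so $v(1+4a^2) = 1 > 0$, and since $5$ is not a square in $\qq_5$ (odd valuation), $X^2 - X - 1$ is irreducible over $\qq_5$, yet $v(1+4a^2) \neq 0$. (This is exactly alternative (b) in \Cref{P:quatNonSplitNecessary}.) Consequently the sub-case you discard --- $v(b)\neq 0$ together with $v(1+4a^2)\neq 0$ --- genuinely occurs, and it is the only delicate one: both summands of $g(a,b)$ then have negative valuation and could in principle cancel. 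Your argument never addresses this cancellation. (Your case (ii), by contrast, with $v(b)=0$ and $v(1+4a^2)\neq 0$, is trivial from the valuation computations and does not need to be ruled out; you conflated the two cases.)

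The paper's proof handles exactly the sub-case you miss: assume for contradiction $v(b)\neq 0$ and $g(a,b)\in\mc{O}_v$, multiply by $(b/(b-1)^2)^2$ to get
$$ v\!\left(\left(\frac{4a^2b}{(b-1)^2}\right)^2\frac{1}{1+4a^2} - 1\right) \geq \lvert v(b)\rvert > 0, $$
and then apply Hensel's Lemma to $X^2 - (1+t)$ where $t$ is the small quantity on the left (here the derivative is $2X$, which is a unit at $X=1$ since $v$ is non-dyadic, so Hensel \emph{does} apply) to conclude that $1+4a^2$ is a square in $K$, contradicting irreducibility. This Hensel step --- applied to the \emph{right} polynomial, where the derivative does not vanish --- is the ingredient your proof is missing.
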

\begin{proof}
We can compute that for $a \in K$ we have
\begin{displaymath}
v\left(\frac{16a^4}{1+4a^2}\right) \begin{cases}
= -v(1+4a^2) < 0 &\text{ if } v(1+4a^2)>0, \\
= 2v(a) + 2v(2) < 0 &\text{ if } v(1+4a^2) < 0, \\
\geq 0 &\text{ if } v(1+4a^2) = 0,
\end{cases}
\end{displaymath}
and similarly, for $b \in K$
\begin{displaymath}
v\left(\frac{(b-1)^2}{b}\right)\begin{cases}
= -\lvert v(b) \rvert < 0 &\text{ if } v(b) \neq 0, \\
\geq 0 &\text{ if } v(b) = 0.
\end{cases}
\end{displaymath}
\eqref{it:gcn2-1} and \eqref{it:gcn2-2} now follow immediately.

For \eqref{it:gcn2-3}, assume that $v$ is henselian and non-dyadic, $X^2 - X - a^2$ is irreducible, and either $1+4a^2 \not\in \mc{O}_v^\times$ or $b \not\in \mc{O}_v^\times$; we need to show that $g(a, b) \not\in \mc{O}_v$.
If $b \in \mc{O}_v^\times$, then this is immediate from the computations in the above paragrapgh.
Assume for the sake of a contradiction that $b \not\in \mc{O}_v^\times$ and $v(g(a, b)) \geq 0$.
Then
$$ v\left(\left(\frac{4a^2b}{(b-1)^2}\right)^2\frac{1}{1+4a^2} - 1\right) = v\left(\left(\frac{b}{(b-1)^2}\right)^2g(a, b)\right) \geq \lvert v(b) \rvert > 0.$$
Using that $(K, v)$ is henselian and non-dyadic, this implies that $1+4a^2$ is a square in $K$, contradicting the assumption that $X^2 - X - a^2$ was irreducible.
\end{proof}
\begin{lem}\label{lem:gchar2}
Let $(K, v)$ be a valued field and consider the rational function
\begin{equation*}\label{eq:gchar2}
g(X, Y) = X^5\left(\left(\frac{(Y-1)^2}{Y}\right)^2 - \left(\frac{(Y-1)^2}{Y}\right) - X^2\right) \in K(X, Y).
\end{equation*}
Let $a, b \in K^\times$.
We have the following:
\begin{enumerate}
\item\label{it:gc2-1} If $a, b \in \mc{O}_v^\times$, then $g(a, b) \in \mc{O}_v$.
\item\label{it:gc2-2} If $v(a) = 0$ and $v(b) \neq 0$, then $v(g(a, b)) = -2\lvert v(b) \rvert$.
\item\label{it:gc2-3} If $\charac(K) = 2$, $v$ is henselian, $X^2 - X - a^2$ is irreducible, and $g(a, b) \in \mc{O}_v$, then $a, b \in \mc{O}_v^\times$.
\end{enumerate}
\end{lem}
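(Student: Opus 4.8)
The plan is to follow the blueprint of \Cref{lem:gcharnot2}: set $t = \frac{(b-1)^2}{b}$, so that $g(a,b) = a^5(t^2 - t - a^2)$, and reuse the elementary valuation estimates recorded in that proof, namely that $v(t) = -\lvert v(b) \rvert$ if $v(b) \neq 0$ and $v(t) \geq 0$ if $v(b) = 0$. Parts \eqref{it:gc2-1} and \eqref{it:gc2-2} are then immediate: when $v(a) = v(b) = 0$ one has $v(a^5) = 0$ and $v(t^2 - t - a^2) \geq 0$, so $g(a,b) \in \mc{O}_v$; when $v(a) = 0$ and $v(b) \neq 0$ the term $t^2$ has valuation $-2\lvert v(b) \rvert$, strictly below the valuations of $t$ and of $a^2$, so $v(g(a,b)) = 5v(a) + v(t^2) = -2\lvert v(b)\rvert$.

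For \eqref{it:gc2-3} I would argue in three steps. First, $v(a) \leq 0$: if $v(a) > 0$ then the residue of $X^2 - X - a^2$ modulo $\mf{m}_v$ is $X^2 - X$, which has $\ovl{0}^v$ as a simple root, so Hensel's Lemma (\Cref{T:Hensel}, valid as $v$ is henselian) lifts it to a root of $X^2 - X - a^2$ in $K$, contradicting irreducibility. Second, $v(a) = 0$: suppose $v(a) < 0$; since $g(a,b) \in \mc{O}_v$ we get $v\big(g(a,b)a^{-5}\big) = v(g(a,b)) - 5v(a) > 0$, so $g(a,b)a^{-5} \in \mf{m}_v$, and applying Hensel's Lemma to $X^2 - X - g(a,b)a^{-5}$ (again with residue $X^2 - X$ and simple root $\ovl{0}^v$) produces $c \in \mc{O}_v$ with $c^2 - c = g(a,b)a^{-5} = t^2 - t - a^2$; here $\charac(K) = 2$ is used to rewrite this as $a^2 = (t^2 - t) - (c^2 - c) = (t+c)^2 - (t+c)$, exhibiting $t + c \in K$ as a root of $X^2 - X - a^2$, a contradiction. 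Third, $v(b) = 0$: if $v(b) \neq 0$ then by \eqref{it:gc2-2} we have $v(g(a,b)) = -2\lvert v(b) \rvert < 0$, contradicting $g(a,b) \in \mc{O}_v$. Hence $a, b \in \mc{O}_v^\times$.

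The one point that is not pure bookkeeping is the case $v(a) < 0$ in \eqref{it:gc2-3}: a direct term-by-term valuation analysis of $a^5(t^2 - t - a^2)$ stalls because $t$ and $a$ may have comparable negative valuations and the resulting cancellation does not resolve in one step. The idea that makes it work is to divide by $a^5$ and read $g(a,b)a^{-5} = (t^2 - t) - a^2$ as $\wp(t) - a^2$, where $\wp\colon x \mapsto x^2 - x$ is the Artin--Schreier operator; in a henselian valued field every element of $\mf{m}_v$ lies in $\wp(\mc{O}_v)$ (apply Hensel's Lemma to $X^2 - X - z$), so $g(a,b) \in \mc{O}_v$ forces $g(a,b)a^{-5} \in \wp(K)$, and in characteristic $2$ the additivity $\wp(t) - \wp(c) = \wp(t+c)$ transfers this to $a^2 \in \wp(K)$, i.e.\ to the reducibility of $X^2 - X - a^2$. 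This is the characteristic-$2$ counterpart of the ``$1+4a^2$ is a square'' contradiction appearing in \eqref{it:gcn2-3} of \Cref{lem:gcharnot2}, with squaring replaced by the Artin--Schreier map throughout.
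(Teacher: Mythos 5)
Your proof is correct. Parts \eqref{it:gc2-1} and \eqref{it:gc2-2} are the same immediate valuation bookkeeping the paper uses. For part \eqref{it:gc2-3}, however, your argument is genuinely different from the paper's, and it is worth spelling out the contrast.

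The paper shows $v(a) \le 0$ by henselianity, and then proves a quantitative upper bound: for \emph{every} $y \in K$, $v(y^2 - y - a^2)$ is bounded above (the paper states $\leq -4v(a)$), because any $y$ making $y^2 - y - a^2$ too small would, after a suitable rescaling, give an approximate root of $X^2 - X - a^2$ to which Hensel's Lemma applies, contradicting irreducibility. Applying this bound at $y = t$ and adding $5v(a)$ forces $v(g(a,b)) < 0$ when $v(a) < 0$. Your proof instead assumes $g(a,b) \in \mc{O}_v$ and $v(a) < 0$, observes $g(a,b)a^{-5} \in \mf{m}_v$, and applies Hensel at $0$ to $X^2 - X - g(a,b)a^{-5}$ (which, conveniently, has $\mc{O}_v$-coefficients under your running hypotheses, so no rescaling is needed) to produce $c$ with $\wp(c) = \wp(t) - a^2$. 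Additivity of the Artin--Schreier map $\wp$ in characteristic $2$ then gives $a^2 = \wp(t+c)$, contradicting irreducibility. Both arguments are valid and both ultimately reduce to Hensel's Lemma for an Artin--Schreier polynomial, but they have a different shape: the paper derives an a priori valuation bound independent of $g$, whereas you derive a contradiction by solving an auxiliary Artin--Schreier equation and transferring solvability. Your version has the merit of running exactly parallel to the proof of \Cref{lem:gcharnot2}\eqref{it:gcn2-3}, with ``$1+4a^2$ is a square'' replaced by ``$a^2 \in \wp(K)$'' and the multiplicative square map replaced by $\wp$; it also sidesteps the slightly delicate point that $X^2 - X - a^2 \notin \mc{O}_v[X]$ when $v(a) < 0$, which the paper's direct Hensel bound has to handle implicitly by rescaling. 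One small remark: your aside guessing that the paper's argument ``stalls'' on a term-by-term analysis does not describe the paper's actual proof (which, as above, is also Hensel-based), but this does not affect the correctness of what you wrote.
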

\begin{proof}
For $b \in K$ we have
\begin{displaymath}
v\left(\frac{(b-1)^2}{b}\right)\begin{cases}
= -\lvert v(b) \rvert < 0 &\text{ if } v(b) \neq 0, \\
\geq 0 &\text{ if } v(b) = 0.
\end{cases}
\end{displaymath}
From this, \eqref{it:gc2-1} and \eqref{it:gc2-2} follow immediately.

For \eqref{it:gc2-3}, assume that $\charac(K) = 2$, $v$ is henselian, $X^2 - X - a^2$ is irreducible, and either $a \not\in \mc{O}_v^\times$ or $b \not\in \mc{O}_v^\times$; we need to show that $g(a, b) \not\in \mc{O}_v$.
Observe that anyway $v(a) \leq 0$; otherwise $X^2 - X - a^2$ would be reducible by the henselianity of $v$.
More, precisely, we have for any $y \in K$ that $v(y^2 - y - a^2) \leq -4v(a)$ since $v$ is henselian.
If $v(a) < 0$, we thus obtain that $v(g(a, b)) < v(a) < 0$.
On the other hand, if $v(a) = 0$ and $v(b) \neq 0$, we obtain that $v(g(a, b)) = -2\lvert v(b) \rvert < 0$ by \eqref{it:gc2-2}.
This concludes the proof of \eqref{it:gc2-3}.
\end{proof}
For a field $K$ and $c \in K^\times$, define the set
\begin{align*}
\Odd(c) &= \lbrace v \in \mc{V}_K \mid v(c) \text{ is odd} \rbrace.
\end{align*}
\begin{lem}\label{nicomainthm}
Let $K$ be a global field. Let $\pi \in K^\times$ be such that $S = \Odd(\pi)$ has odd cardinality. Let $u \in K^\times$ be such that for all $v\in S$ one has $v(u) = 0$ and $X^2 - X - u^2$ is irreducible over $Kv$.
If $\charac(K) = 2$, let $g(X, Y) \in K(X, Y)$ be as in \Cref{lem:gchar2}.
If $\charac(K) \neq 2$, then assume that $S$ contains all dyadic valuations, and let $g(X, Y) \in K(X, Y)$ be as in \Cref{lem:gcharnot2}.

For $x \in K$ we have
\begin{displaymath}
x \in \bigcup_{v \in \mc{V}_K \setminus S} \mf{m}_v \enspace\Leftrightarrow\enspace \exists (a, b) \in \Phi_u^S : \frac{a^2x^2g(a, b)}{1-x-a^2x^2} \in \bigcap_{v \in \Delta [a^2, b\pi)_K} \mc{O}_v.
\end{displaymath}
\end{lem}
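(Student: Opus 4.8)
The argument runs through the algebraic identity
\[
1 - x - a^2 x^2 \;=\; N_{L/K}(1 - \alpha x), \qquad L := \sep{K}{a^2},\quad \alpha^2 - \alpha = a^2,
\]
valid whenever $X^2 - X - a^2$ is irreducible over $K$ (when it splits, $L \cong K \times K$ and both sides are read componentwise); it follows by substituting $z = x^{-1}$ into $z^2 - z - a^2 = N_{L/K}(z - \alpha)$ and multiplying by $x^2$. Write $h := \frac{a^2 x^2 g(a,b)}{1 - x - a^2 x^2}$ and $Q := [a^2, b\pi)_K$, and for a valuation $w$ of $K$ over which $X^2 - X - a^2$ is irreducible let $W$ be its extension to $L$; since $w(N_{L/K}(\gamma)) = 2W(\gamma)$, the identity gives
\begin{equation}\label{eq:vhform}
w(h) \;=\; 2w(a) + 2w(x) + w(g(a,b)) - 2W(1 - \alpha x).
\end{equation}
Now fix $(a,b) \in \Phi_u^S$. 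For each $v \in S$ one has $v(a) = 0$, $v(b) = 0$, and (by \Cref{T:Hensel}, the reduction of $X^2 - X - a^2$ being the irreducible polynomial $X^2 - X - u^2$ over $Kv$) $X^2 - X - a^2$ is irreducible over $K_v$ with $L_v/K_v$ unramified; in particular $1 + 4a^2 \neq 0$, so $Q$ is a well-defined quaternion algebra, and it is nonreal since $a^2$ is a square, so $X^2 - X - a^2$ has a root in every real closure of $K$ and $Q$ therefore splits there. Hence $\lvert \Delta Q \rvert$ is even by \Cref{T:ABHN}. For $v \in S$, also $v(b\pi) = v(\pi)$ is odd, which together with $L_v/K_v$ unramified forces $Q$ non-split over $K_v$; thus $S \subseteq \Delta Q$, so $\lvert \Delta Q \setminus S \rvert$ is odd, in particular positive. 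Finally, feeding $g(a,b) \in \mc{O}_v$ (from \Cref{lem:gchar2}\eqref{it:gc2-1}, resp.\ \Cref{lem:gcharnot2}\eqref{it:gcn2-1}) into \eqref{eq:vhform}, and using that $\alpha$ is a $W$-unit whose residue lies outside $Kv$ — so $W(1 - \alpha x)$ is $0$ if $v(x) \geq 0$ and $v(x)$ if $v(x) < 0$ — one finds $v(h) \geq 0$ for every $v \in S$, every $(a,b) \in \Phi_u^S$ and every $x \in K$.

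For the implication "$\Leftarrow$", suppose $(a,b) \in \Phi_u^S$ and $h \in \bigcap_{w \in \Delta Q} \mc{O}_w$. If $x = 0$ then $x \in \mf{m}_v$ for any $v$, so we may assume $x \neq 0$ and argue by contradiction, assuming $v(x) \leq 0$ for all $v \in \mc{V}_K \setminus S$. Choose any $w \in \Delta Q \setminus S$ (possible, as shown). Since $Q$ is non-split over $K_w$, $X^2 - X - a^2$ is irreducible over $K_w$. Since $w \notin S = \Odd(\pi)$, the integer $w(\pi)$ is even, so \Cref{P:quatNonSplitNecessary} applied to $Q$ at $w$ forces $b \notin \mc{O}_w^\times$, or else $\charac K = 2$ and $a \notin \mc{O}_w^\times$, or else $\charac K \neq 2$ and $1 + 4a^2 \notin \mc{O}_w^\times$; in each case $w(g(a,b)) < 0$ by \Cref{lem:gchar2}\eqref{it:gc2-3}, resp.\ \Cref{lem:gcharnot2}\eqref{it:gcn2-3} (applicable because $K_w$ is henselian — and non-dyadic when $\charac K \neq 2$, since all dyadic valuations lie in $S$ — and $X^2 - X - a^2$ is irreducible over $K_w$). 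On the other hand $w(a) \leq 0$ by \Cref{P:quatNonSplitNecessary}, and from $\alpha(\alpha - 1) = a^2$ one reads off $W(\alpha) = w(a)$; therefore $W(1 - \alpha x) \geq \min\{0, w(a) + w(x)\} = w(a) + w(x)$ because $w(a), w(x) \leq 0$. Substituting into \eqref{eq:vhform} yields $w(h) \leq w(g(a,b)) < 0$, contradicting $h \in \mc{O}_w$. Hence $v(x) > 0$ for some $v \in \mc{V}_K \setminus S$, i.e.\ $x \in \bigcup_{v \in \mc{V}_K \setminus S} \mf{m}_v$.

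Conversely, for "$\Rightarrow$", suppose $x \in \bigcup_{v \in \mc{V}_K \setminus S} \mf{m}_v$. If $x = 0$ then $h = 0$ and any $(a,b) \in \Phi_u^S$ works, so assume $x \neq 0$ and fix $v_0 \in \mc{V}_K \setminus S$ with $v_0(x) \geq 1$. By Weak Approximation (\Cref{T:WAT}) choose $a \in K^\times$ with $a \equiv u \bmod \mf{m}_v$ for all $v \in S$, with $v_0(a) = 0$, and with $X^2 - X - a^2$ irreducible over the residue field $Kv_0$ (such a residue exists in every finite field; in odd characteristic it forces $1 + 4a^2 \in \mc{O}_{v_0}^\times$ automatically). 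Then $L := \sep{K}{a^2}$ is a quadratic field extension of $K$, unramified over $K_v$ for all $v \in S \cup \{v_0\}$. Let $Q_0$ be the unique nonreal quaternion algebra with $\Delta Q_0 = S \cup \{v_0\}$ (\Cref{T:ABHN}; the cardinality is even as $\lvert S \rvert$ is odd). Since $L_v / K_v$ is quadratic for every $v \in \Delta Q_0$, $Q_0 \otimes_K L$ splits at every place of $L$ (using \Cref{P:quatsplit} above $\Delta Q_0$, and automatically elsewhere), hence splits, so $Q_0 \cong [a^2, d_0)_K$ for some $d_0 \in K^\times$ by the cyclic-algebra description of $\Ker(\Br(K) \to \Br(L))$. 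As $v(d_0)$ is odd for each $v \in \Delta Q_0$ (unramified in $L$) while $v(\pi)$ is odd exactly for $v \in S$, the element $d_0 \pi^{-1}$ has even valuation at each $v \in S$ and odd valuation at $v_0$; applying Weak Approximation inside $L$ to the $\zz$-valuations above $S \cup \{v_0\}$, we find $\ell \in L^\times$ such that $b := d_0 \pi^{-1} N_{L/K}(\ell)$ has $v(b) = 0$ for all $v \in S$ and $v_0(b) = 1$ (the norm from an unramified quadratic extension realizing every even valuation). Then $(a,b) \in \Phi_u^S$ and $[a^2, b\pi) \cong [a^2, d_0) = Q_0$, so $\Delta[a^2, b\pi) = S \cup \{v_0\}$. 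We already know $h \in \mc{O}_v$ for $v \in S$; and at $v_0$, from $v_0(a) = 0$, $v_0(b) = 1$ (and $v_0(1 + 4a^2) = 0$ when $\charac K \neq 2$) one gets $v_0(g(a,b)) = -2$ via \Cref{lem:gchar2}\eqref{it:gc2-2}, resp.\ \Cref{lem:gcharnot2}\eqref{it:gcn2-2}, while $W(1 - \alpha x) = 0$ because $v_0(x) > 0$ and $\alpha$ is a $W$-unit; so \eqref{eq:vhform} gives $v_0(h) = 2v_0(x) - 2 \geq 0$. Therefore $h \in \bigcap_{w \in \Delta[a^2, b\pi)} \mc{O}_w$, as required.

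I expect the main difficulty to be the "$\Rightarrow$" construction: one must produce a single pair $(a,b) \in \Phi_u^S$ that both realises the prescribed ramification set $\Delta[a^2, b\pi) = S \cup \{v_0\}$ and pins down the valuations of $b$ (and the reduction of $a$ at $v_0$) tightly enough to make $h$ a $v_0$-integer. This is where the global ingredients enter — \Cref{T:ABHN}, the description of $\Ker(\Br(K) \to \Br(L))$ for a quadratic $L$, and a second Weak Approximation, now inside $L$. By contrast, once the identity $1 - x - a^2 x^2 = N_{L/K}(1 - \alpha x)$ is available, the "$\Leftarrow$" direction needs no case analysis at all.
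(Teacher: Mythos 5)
Your proof is correct and follows the same core strategy as the paper's: control the valuation of $g(a,b)$ via \Cref{lem:gcharnot2}/\Cref{lem:gchar2} at places in $\Delta[a^2,b\pi)_K$, and exploit the factorisation of $1-x-a^2x^2$ to read off $v(h)$. The genuine difference lies in the ``$\Rightarrow$'' direction and in the treatment of $1-x-a^2x^2$. The paper cites \autocite[Lemma 6.6]{DaansGlobal} as a black box to produce a pair $(a,b)\in\Phi_u^S$ with $\Delta[a^2,b\pi)_K = S\cup\{w\}$, whereas you reconstruct it from scratch: choose $a$ by Weak Approximation so that $L=\sep{K}{a^2}$ is an unramified quadratic extension at $S\cup\{v_0\}$, invoke \Cref{T:ABHN} for the existence and uniqueness of $Q_0$ with $\Delta Q_0 = S\cup\{v_0\}$, then use the cyclic-algebra description of $\Ker(\Br(K)\to\Br(L))$ to write $Q_0 \cong [a^2,d_0)_K$, and finally apply a second Weak Approximation inside $L$ to twist $d_0$ by a norm and normalise the valuations of $b$ at $S\cup\{v_0\}$. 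This makes the argument self-contained at the cost of invoking $\Br(L/K)$ explicitly. You also replace the paper's use of \Cref{P:binaryFormValuations} (applied to the form $X^2-XY-a^2Y^2$) with the identity $1-x-a^2x^2 = N_{L/K}(1-\alpha x)$ and valuation theory over $L$; these are the same computation in slightly different language, though your norm-form identity has the small advantage of giving an exact formula $w(h) = 2w(a)+2w(x)+w(g(a,b)) - 2W(1-\alpha x)$ at every place where $X^2 - X - a^2$ is irreducible over $K_w$, whereas the paper uses an inequality ($\leq$) in the ``$\Leftarrow$'' direction when $w(a)$ may be negative. Your ``$\Leftarrow$'' direction is framed as a contradiction where the paper's is direct, but the estimates are identical. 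All auxiliary claims (that $S\subseteq\Delta[a^2,b\pi)_K$, that $[a^2,b\pi)_K$ is nonreal, that $W(\alpha)=w(a)$ for $w(a)\leq 0$, and that $v_0(b)$ can be pinned to exactly $1$ rather than merely odd, which you correctly note is needed for $v_0(h)\geq 0$) check out.
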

\begin{proof}
We first consider the implication from left to right.
Consider $x \in \mf{m}_w$ for some $w \in \mc{V}_K \setminus S$.
As in the proof of \autocite[Lemma 6.6]{DaansGlobal}, we can find $(a, b) \in \Phi_u^S$ such that $\Delta [a^2, b\pi)_K = S \cup \lbrace w \rbrace$ and $w(1+4a^2) = 0$.
We must then have that $w(b\pi)$ is odd by \Cref{P:quatNonSplitNecessary}, and since $w(\pi)$ is even, this implies that $w(b)$ is odd.
After rescaling $b$ by a square in $K$ if necessary (which does not affect the $K$-isomorphism class of $[a^2, b\pi)_K$), we may assume without loss of generality that $w(b) = 1$.

By either \Cref{lem:gcharnot2} or \Cref{lem:gchar2} we obtain that $w(g(a, b)) = -2$, whereas $v(g(a, b)) \geq 0$ for $v \in S$.
Furthermore, since for all $v \in S \cup \lbrace w \rbrace$ one has that $X^2 - X - a^2$ is irreducible over $Kv$, and hence the form $X^2 - XY - a^2Y^2$ has no non-trivial zeroes over $Kv$, we compute by \Cref{P:binaryFormValuations} that for $v \in S \cup \lbrace w \rbrace = \Delta [a^2, b\pi)_K$ one has
\begin{align*}
v\left(\frac{a^2x^2g(a, b)}{1-x-a^2x^2}\right) &= 2v(a) + 2v(x) + v(g(a, b)) - \min \lbrace 0, 2v(a) + 2v(x) \rbrace \\
&= \max \lbrace v(g(a, b)), 2v(x) + v(g(a, b)) \rbrace \geq 0
\end{align*}
where the inequality in the end follows from the fact that $v(g(a, b)) \geq 0$ for $v \in S$, and from $w(g(a, b)) = -2$ and $w(x) \geq 1$.
We conclude that $\frac{a^2x^2g(a, b)}{1-x-a^2x^2} \in \bigcap_{v \in \Delta [a^2, b\pi)_K } \mc{O}_v$ as desired.

For the other implication, consider $(a, b) \in \Phi_u^S$ arbitrary.
As in the proof of \autocite[Lemma 6.6]{DaansGlobal} we see that $S \subseteq \Delta [a^2, b\pi)_K$ and that $[a^2, b\pi)_K$ is nonreal, so that by \Cref{T:ABHN} there exists $w \in \Delta [a^2, b\pi)_K \setminus S$.
By \Cref{P:quatNonSplitNecessary}, using that $w$ is non-dyadic if $\charac(K) \neq 2$, at least one of the following occurs:
\begin{enumerate}[(i)]
\item $w(b\pi)$ is odd.
Since $w(\pi)$ is even, this implies $w(b)$ is odd,
\item $\charac(K) \neq 2$ and $w(1+4a^2)$ is odd,
\item $\charac(K) = 2$ and $w(a) < 0$.
\end{enumerate}
Furthermore, we know that $X^2 - X - a^2$ is irreducible over $K_w$, since $[a^2, b\pi)_K$ is non-split over $K_w$.
It follows by \Cref{lem:gcharnot2} or \Cref{lem:gchar2} that $w(g(a, b)) < 0$.
We compute that for $x \in K$ with $\frac{a^2x^2g(a, b)}{1 - x - a^2x^2} \in \bigcap_{v \in \Delta[a^2, b)_K} \mc{O}_v$ we have
\begin{align*}
0 &\leq w\left(\frac{a^2x^2g(a, b)}{1 - x - a^2x^2}\right) \leq 2w(a) + 2w(x) + w(g(a, b)) - \min \lbrace 0, 2w(a) + 2w(x) \rbrace \\
&= \max \lbrace 2w(a) + 2w(x) + w(g(a, b)), w(g(a, b)) \rbrace
\end{align*}
Since $w(g(a, b)) < 0$, we infer that
$2w(x) \geq -2w(a) - w(g(a, b)) > 0$,
whereby $x \in \mf{m}_w$.
This shows the other implication.
\end{proof}
\begin{stel}\label{T:nicomainthmQO}
Let $K$ be a global field, $S \subseteq \mc{V}_K$ a non-empty finite set.
%The set $\bigcup_{v \in \mathcal{V} \setminus S } \mathfrak{m}_v$ has an $\exists_{19}$-$\Lar(K)$-definition in $K$. Furthermore, 
The set $\bigcap_{v \in \mathcal{V} \setminus S} \mathcal{O}_v$ has an $\forall_{10}$-$\Lar(K)$-definition in $K$.
\end{stel}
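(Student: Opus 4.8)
The plan is to dualise to maximal ideals and then feed everything into \Cref{nicomainthm}. Since $\mc{V}_K$ is infinite and $S$ is finite, $\mc{V}_K\setminus S$ is non-empty, so by \Cref{P:EtoAgeneral} it suffices to exhibit an $\exists_{10}$-$\Lar(K)$-definition of $\bigcup_{v\in\mc{V}_K\setminus S}\mf{m}_v$ in $K$. The first step is to replace $S$ by a more convenient finite superset: using Weak Approximation together with the finiteness of the (ray) class group, as in the construction in the proof of \autocite[Lemma 6.6]{DaansGlobal}, I would choose a finite set $S'\supseteq S$ and elements $\pi,u\in K^\times$ such that $S'=\Odd(\pi)$ has odd cardinality, $S'$ contains all dyadic valuations on $K$ whenever $\charac(K)\neq 2$, and for every $v\in S'$ one has $v(u)=0$ and $X^2-X-u^2$ is irreducible over $Kv$. (All requirements except $S'=\Odd(\pi)$ amount to finitely many conditions, and realising a suitable finite superset of $S$ together with the dyadic valuations as $\Odd(\pi)$ is possible after adjoining one or two auxiliary places, by the finiteness of the class group.) Taking $g$ to be the rational function of \Cref{lem:gchar2} if $\charac(K)=2$ and of \Cref{lem:gcharnot2} otherwise, \Cref{nicomainthm} applied to $S'$ gives, for all $x\in K$,
\[
x\in\bigcup_{v\in\mc{V}_K\setminus S'}\mf{m}_v \iff \exists a\,\exists b\left((a,b)\in\Phi_u^{S'}\ \wedge\ \frac{a^2x^2g(a,b)}{1-x-a^2x^2}\in\bigcap_{v\in\Delta[a^2,b\pi)_K}\mc{O}_v\right).
\]

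The second step is to turn the right-hand side into an existential $\Lar(K)$-formula with exactly $10$ quantifiers. By \Cref{L:XE3} the set $\{(a,b)\in K^2\mid(a,b)\in\Phi_u^{S'}\}$ is $\exists_3$-$\Lar(K)$-definable; view its cylinder $D_1\subseteq K^3$ in the variables $a,b,x$, still $\exists_3$-definable. By \Cref{C:EdefinabilitySemilocalRingsGlobalFields} there is an $\exists_6$-$\Lar(K)$-formula $\varphi(X,A,B)$ defining $\bigcap_{v\in\Delta[a',b')_K}\mc{O}_v$ whenever $(1+4a')b'\neq 0$ and $[a',b')_K$ is nonreal. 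Substituting $A$ by $a^2$, $B$ by $b\pi$ and $X$ by the rational function $\frac{a^2x^2g(a,b)}{1-x-a^2x^2}$, and clearing denominators at no cost in quantifiers via \Cref{C:LringvsLfield}, yields an $\exists_6$-$\Lar(K)$-formula in $a,b,x$ whose solution set $D_2\subseteq K^3$ coincides, on the locus where $(a,b)\in\Phi_u^{S'}$, with the second conjunct above; here one uses that for $(a,b)\in\Phi_u^{S'}$ the hypotheses $(1+4a^2)(b\pi)\neq 0$ and $[a^2,b\pi)_K$ nonreal are met, and that $1-x-a^2x^2\neq 0$ for such $(a,b)$ and all $x$, both of which are recorded in the proof of \Cref{nicomainthm}. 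Since a global field is finitely generated over its (perfect) prime field, \Cref{cor:finGenOverPerf} shows $D_1\cap D_2$ is $\exists_{3+6-1}=\exists_8$-$\Lar(K)$-definable, and existentially quantifying out $a$ and $b$ shows $\bigcup_{v\in\mc{V}_K\setminus S'}\mf{m}_v$ is $\exists_{8+2}=\exists_{10}$-$\Lar(K)$-definable.

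The final step is to correct back from $S'$ to $S$. As $S\subseteq S'$, the places outside $S$ are the disjoint union of those outside $S'$ and those in $S'\setminus S$, so
\[
\bigcup_{v\in\mc{V}_K\setminus S}\mf{m}_v=\left(\bigcup_{v\in\mc{V}_K\setminus S'}\mf{m}_v\right)\cup\left(\bigcup_{v\in S'\setminus S}\mf{m}_v\right).
\]
For each $v\in S'\setminus S$ fix $t_v\in K^\times$ with $v(t_v)=1$; then $\mf{m}_v=\{x\in K\mid t_v^{-1}x\in\mc{O}_v\}$, which is $\exists_3$-$\Lar(K)$-definable because $\mc{O}_v$ is (\Cref{P:valE3} with the singleton $\{v\}$, followed by the affine substitution $x\mapsto t_v^{-1}x$). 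Hence the finite union $\bigcup_{v\in S'\setminus S}\mf{m}_v$ is $\exists_3$-$\Lar(K)$-definable, and therefore $\bigcup_{v\in\mc{V}_K\setminus S}\mf{m}_v$ is $\exists_{\max\{10,3\}}=\exists_{10}$-$\Lar(K)$-definable. By \Cref{P:EtoAgeneral}, $\bigcap_{v\in\mc{V}_K\setminus S}\mc{O}_v$ has an $\forall_{10}$-$\Lar(K)$-definition in $K$, as desired. The step I expect to require the most care is the tight quantifier arithmetic $2+(3+6-1)=10$: it leaves no slack, and it leans crucially on the \emph{uniform} $\exists_3$- and $\exists_6$-definitions supplied by \Cref{L:XE3} and \Cref{C:EdefinabilitySemilocalRingsGlobalFields} and on the intersection-saving \Cref{cor:finGenOverPerf}; the other delicate point is the class-group construction producing $\pi$ and $u$ that realise a good superset of $S$ as $\Odd(\pi)$.
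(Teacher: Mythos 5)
Your proposal is correct and follows essentially the same route as the paper's own proof: dualise via \Cref{P:EtoAgeneral}, enlarge $S$ to a set $S'=\Odd(\pi)$ of odd cardinality containing the dyadic places, pick $u$ with $X^2-X-u^2$ irreducible at all $v\in S'$, apply \Cref{nicomainthm}, and perform the quantifier count $2+(3+6-1)=10$ via \Cref{L:XE3}, \Cref{C:EdefinabilitySemilocalRingsGlobalFields} and \Cref{cor:finGenOverPerf}. The only cosmetic difference is that you perform the main argument for $S'$ first and correct back to $S$ afterwards, whereas the paper establishes the ``may enlarge $S$'' reduction up front; the content and quantifier bookkeeping are identical.
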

\begin{proof}
In view of \Cref{P:EtoAgeneral}, we only need to show that $\bigcup_{v \in \mathcal{V} \setminus S } \mathfrak{m}_v$ has an $\exists_{10}$-$\Lar(K)$-definition in $K$.
Furthermore, it suffices to show this for some finite set $S'$ of valuations containing the set $S$.
Indeed we have
$$
\bigcup_{v \in \mc{V} \setminus S} \mathfrak{m}_v = \bigcup_{v \in S'\setminus S} \mathfrak{m}_v \cup \bigcup_{v \in \mc{V} \setminus S'} \mathfrak{m}_v $$
and, for each $v \in S' \setminus S$ individually, $\mf{m}_v$ is $\exists_3$-$\Lar(K)$-definable by \Cref{P:valE3}: after fixing a uniformiser $\pi$ of $v$, one has $\mf{m}_v = \lbrace x \in K \mid x\pi^{-1} \in \mc{O}_v \rbrace$.
Since $S' \setminus S$ is finite, $\exists_{10}$-$\Lar(K)$-definability of $\bigcup_{v \in \mc{V} \setminus S} \mathfrak{m}_v$ thus follows from $\exists_{10}$-$\Lar(K)$-definability of $\bigcup_{v \in \mc{V} \setminus S'} \mathfrak{m}_v$.
As such, in the rest of the proof, we may without loss of generality replace $S$ by a larger finite set.

If $\charac(K) = 0$, we enlarge $S$ so that it contains all dyadic valuations. By \autocite[Lemma 6.7]{DaansGlobal} we may further enlarge $S$ so that $S = \Odd(\pi)$ for some $\pi \in K^\times$ and $\lvert S \rvert$ is odd.
Fix $u \in \bigcap_{v \in S} \mc{O}_v^\times$ such that $X^2 - X - u^2$ is irreducible over $Kv$ for all $v \in S$; such element $u$ exists by Weak Approximation and \autocite[Lemma 6.5]{DaansGlobal}.
By \Cref{nicomainthm} there is a rational function $g(X, Y) \in K(X, Y)$ such that, for any $x \in K$, one has
\begin{displaymath}
x \in \bigcup_{v \in \mc{V}_K \setminus S} \mf{m}_v \enspace\Leftrightarrow\enspace \exists a, b \in K :  (a, b) \in \Phi_u^S \text{ and } \frac{a^2x^2g(a, b)}{1-x-a^2x^2} \in \bigcap_{v \in \Delta [a^2, b\pi)_K} \mc{O}_v.
\end{displaymath}
Since $\Phi_u^S$ is $\exists_3$-$\Lar(K)$-definable by \Cref{L:XE3} and the sets $\bigcap_{v \in \Delta [a^2, b\pi)_K} \mc{O}_v$ are uniformly $\exists_6$-$\Lar(K)$-definable by \Cref{C:EdefinabilitySemilocalRingsGlobalFields}, we obtain that $\bigcap_{v \in \mc{V}_K \setminus S} \mf{m}_v$ is existentially definable with $2+3+6-1=10$ quantifiers by \Cref{cor:finGenOverPerf} (and in view of \Cref{P:LringvsLfield}).
\end{proof}
\begin{ques}\label{Q:erkZ}
What is the smallest natural number $m$ such that $\zz$ is $\forall_m$-$\Lar$-definable in $\qq$?
\end{ques}
By \Cref{T:nicomainthmQO} and \Cref{P:NotE1} we obtain that the answer to \Cref{Q:erkZ} is at least $2$ and at most $10$.

\section{Recursively enumerable subsets of $\qq$}\label{sect:recursive}
We conclude with a proof of the promised undecidability result concerning the $\forall_{9}\exists_{10}$-$\Lar$-theory of $\qq$ (\Cref{C:undecidable}).
We present the argument in a way that makes transparent how further quantitative improments to the universal definability of $\zz$ in $\qq$ would impact the undecidability result.
The argument is essentially a reformulation of the proof of \autocite[Theorem 1.3]{ZhangSunZinQ}.

To be precise: when we say that the $\forall_m\exists_n$-$\Lar$-theory of a ring $R$ is undecidable, we mean that there is no algorithm which takes as input an arbitrary $\forall_m\exists_n$-$\Lar$-sentence $\varphi$ and, after a finite amount of steps, outputs YES if $R \models \varphi$ and NO if $R \not\models \varphi$.

\begin{prop}\label{P:undecidable}
Let $m \in \nat$ such that $m \geq 4$.
Assume that $\zz$ is $\forall_{m}$-$\Lar$-definable in $\qq$.
Then every recursively enumerable subset of $\qq$ is $\exists_{10}\forall_{m}$-$\Lar$-definable in $\qq$.
Furthermore, every recursively enumerable subset of $\zz$ is $\exists_9\forall_m$-$\Lar$-definable in $\qq$.

In particular, the $\forall_9\exists_m$-$\Lar$-theory of $\qq$ is undecidable.
\end{prop}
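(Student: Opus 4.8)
The plan is to feed the quantitative form of the negative solution of Hilbert's tenth problem into the hypothesis, the bridge being a simple quantifier-counting observation. Recall first that every recursively enumerable subset of $\zz$, and more generally of $\zz^{k}$ for any $k$, can be written as $\lbrace \bar{a} \mid \exists \bar{y} \in \zz^{9} : P(\bar{a}, \bar{y}) = 0 \rbrace$ for a suitable $P$ with integer coefficients; this is the ``nine unknowns'' refinement of the Davis--Putnam--Robinson--Matiyasevich theorem (see the references in \autocite{ZhangSunZinQ}), and it will be the only non-elementary input. The observation is this: since, as noted above, a union of $\exists_{m}$-$\Lar(K)$-definable sets is again $\exists_{m}$-$\Lar(K)$-definable, dually any finite intersection of $\forall_{m}$-$\Lar(K)$-definable sets is $\forall_{m}$-$\Lar(K)$-definable. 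Concretely, writing $\theta(X, Z_{1}, \ldots, Z_{m})$ for a quantifier-free $\Lar$-formula with $\zz = \lbrace t \in \qq \mid \qq \models \forall \bar{Z}\, \theta(t, \bar{Z}) \rbrace$ (which exists by hypothesis), a conjunction of integrality conditions $t_{1} \in \zz, \ldots, t_{r} \in \zz$ is rendered by $\forall \bar{Z}\, \bigwedge_{j} \theta(t_{j}, \bar{Z})$ with a single shared block of $m$ universal quantifiers, and a quantifier-free formula not mentioning $\bar{Z}$ may be placed under that same block. Hence the universal count stays at $m$, no matter how many integrality clauses and equations are present; in particular the gluing of \Cref{cor:finGenOverPerf} is not needed here.

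The first assertion is now immediate. Let $W \subseteq \zz$ be recursively enumerable, fix $P \in \zz[X, Y_{1}, \ldots, Y_{9}]$ with $W = \lbrace a \in \zz \mid \exists \bar{y} \in \zz^{9} : P(a, \bar{y}) = 0 \rbrace$, and observe that for $a \in \qq$ one has $a \in W$ if and only if $a \in \zz$ and there exist $y_{1}, \ldots, y_{9} \in \qq$ lying in $\zz$ with $P(a, \bar{y}) = 0$. Rewriting ``$a \in \zz$'' and each ``$y_{i} \in \zz$'' with one common block $\bar{Z}$ of $m$ universal quantifiers, absorbing the quantifier-free equation $P(a, \bar{y}) = 0$ into that block, and finally prefixing the nine existential quantifiers on $y_{1}, \ldots, y_{9}$, we obtain an $\exists_{9}\forall_{m}$-$\Lar$-formula defining $W$ in $\qq$; it is parameter-free because $P$ has integer coefficients and $\theta$ is an $\Lar$-formula.

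For recursively enumerable $W \subseteq \qq$ one spends one extra existential quantifier on a denominator. Set $\tilde{W} = \lbrace (p, q) \in \zz^{2} \mid q > 0,\ \gcd(p, q) = 1,\ p/q \in W \rbrace$, which is recursively enumerable, and fix $P \in \zz[X_{1}, X_{2}, Y_{1}, \ldots, Y_{9}]$ with $\tilde{W} = \lbrace (p, q) \in \zz^{2} \mid \exists \bar{y} \in \zz^{9} : P(p, q, \bar{y}) = 0 \rbrace$; the side conditions $q > 0$ and $\gcd(p, q) = 1$ are internal to $\tilde{W}$, so they cost nothing. Substituting $p = xq$ to eliminate the numerator, one gets for $x \in \qq$ that $x \in W$ if and only if there are $q, y_{1}, \ldots, y_{9} \in \qq$ with $q \in \zz$, $xq \in \zz$, $y_{1}, \ldots, y_{9} \in \zz$ and $P(xq, q, \bar{y}) = 0$. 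As before this is an $\exists_{10}\forall_{m}$-$\Lar$-formula: ten existential quantifiers for $q, y_{1}, \ldots, y_{9}$ over one shared block of $m$ universal quantifiers carrying the integrality of $q$, of $xq$ and of the $y_{i}$ together with the equation. (Should one prefer the more classical $\nat$-version of the nine unknowns theorem, one repeats the argument with $\zz$ replaced by $\nat$; this is where $m \geq 4$ enters, because ``$t \geq 0$'' is a $\forall_{4}$-$\Lar$-condition on $\qq$, namely $\forall a_{1}, \ldots, a_{4}\, (a_{1}^{2} + a_{2}^{2} + a_{3}^{2} + a_{4}^{2} + t \neq 0 \vee t = 0)$, so ``$t \in \nat$'' still fits inside the shared block of $m$ universal quantifiers.)

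The undecidability follows by a routine reduction. Fix a recursively enumerable $W \subseteq \zz$ that is not recursive, and let $\chi(X, Y_{1}, \ldots, Y_{9}, Z_{1}, \ldots, Z_{m})$ be the quantifier-free $\Lar$-formula produced above, so that $a \in W$ if and only if $\qq \models \exists \bar{Y}\, \forall \bar{Z}\, \chi(\underline{a}, \bar{Y}, \bar{Z})$ for every $a \in \zz$, where $\underline{a}$ is a closed $\Lar$-term evaluating to $a$. Negating, $a \notin W$ if and only if $\qq \models \forall \bar{Y}\, \exists \bar{Z}\, \neg\chi(\underline{a}, \bar{Y}, \bar{Z})$, a $\forall_{9}\exists_{m}$-$\Lar$-sentence computable from $a$; thus a decision procedure for the $\forall_{9}\exists_{m}$-$\Lar$-theory of $\qq$ would decide membership in $\zz \setminus W$, making $W$ recursive, a contradiction. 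The only genuine obstacle in the whole argument is the nine unknowns refinement of the DPRM theorem; the rest is the quantifier bookkeeping above, whose one point requiring care is the claim that a single block of $m$ universal quantifiers really can serve all the integrality clauses (and the equations) at once, so that exactly $9$, respectively $10$, existential quantifiers are spent overall.
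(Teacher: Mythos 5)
Your quantifier-sharing observation (a conjunction of $\forall_m$-conditions, plus a quantifier-free formula, sits under a single block of $m$ universal variables) is correct and is exactly the bookkeeping the paper uses. Your final reduction to undecidability via a non-recursive r.e.\ set is also correct. The gap is in the form of the Davis--Putnam--Robinson--Matiyasevich input that you invoke.

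You assert that every r.e.\ subset of $\zz^{k}$ can be written as $\lbrace \bar{a} \mid \exists \bar{y} \in \zz^{9} : P(\bar{a}, \bar{y}) = 0 \rbrace$. This ``pure $\zz$'' nine-unknowns statement is not what is available in the literature you cite: the result the paper relies on, \autocite[Theorem 1.1(i)]{Sun21}, gives for a recursively enumerable $A \subseteq \nat$ a representation $A = \lbrace x \in \nat \mid \exists y_{1}, \ldots, y_{8} \in \zz,\ y_{9} \in \nat : g(x, \bar{y}) = 0 \rbrace$ --- that is, $8$ unknowns over $\zz$ and one over $\nat$, and crucially only for subsets of $\nat$, not $\zz^{k}$. (The classical Matiyasevich--Jones refinement is likewise over $\nat$.) Your main route tacitly removes both restrictions at no cost. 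One tell-tale sign: if your stated version were available, the hypothesis $m \geq 4$ in the proposition would be superfluous, since you would never need to encode a non-negativity constraint; but $m \geq 4$ is in the statement precisely because that constraint must be dealt with.

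Your parenthetical fallback (``use the $\nat$-version'') repairs the $\nat$-vs-$\zz$ issue using four squares under the shared block, but it still leaves the $\zz^{k}$-vs-$\nat$ issue unaddressed: $\tilde{W}$ is a subset of $\zz \times \nat^{+}$, and some encoding into $\nat$ is needed before the nine-unknowns theorem applies. The paper resolves this by fixing a polynomial injection $f \in \zz[X,Y]$ with $f : \zz \times \zz \to \nat$, passing to the r.e.\ set $\tilde{A} = \lbrace f(a,b) \mid a, b \in \zz,\ b \neq 0,\ a/b \in A \rbrace \subseteq \nat$, applying Sun's theorem to $\tilde{A}$, and then substituting $(ay_{0}, y_{0})$ for $(a,b)$ to recover a definition over $\qq$; the injectivity of $f$ makes the back-and-forth lossless. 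Your argument can be patched along these lines (your $\tilde{W}$ with the $\gcd$-condition plays the role of a canonical lift, and then you still need $f$), but as written the step from ``r.e.\ in $\zz^{2}$'' to ``nine unknowns'' is missing, and it is exactly the step where the quantifier count is decided.
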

\begin{proof}
Fix a polynomial $f \in \zz[X, Y]$ such that $f$ defines an injection $\zz \times \zz \to \nat$ (see e.g. \autocite[Lemma 8.19]{DDF}).
For a subset $A \subseteq \qq$, define
$$ \tilde{A} = \lbrace f(a, b) \mid a, b \in \zz, b \neq 0, \frac{a}{b} \in A \rbrace $$
and observe that for any $a \in \qq$ we have
$$ a \in A \enspace\Leftrightarrow\enspace \exists y_0 \in \qq (y_0 \in \zz, ay_0 \in \zz \text{ and } f(ay_0, y_0) \in \tilde{A}). $$
Now assume that $A$ is recursively enumerable.
Then also $\tilde{A}$ is recursively enumerable.
By \autocite[Theorem 1.1(i)]{Sun21} there exists a polynomial $g_{\tilde{A}} \in \zz[X, Y_1, \ldots, Y_9]$ such that
$$ \tilde{A} = \lbrace x \in \nat \mid \exists y_1, \ldots, y_8 \in \zz, y_9 \in \nat : g_{\tilde{A}}(x, y_1, \ldots, y_9) = 0 \rbrace. $$
We obtain that, for any $a \in \qq$, we have that $a \in A$ if and only if
\begin{equation}\label{eq:AE}
\exists y_0, \ldots, y_9 \in \qq \left( ay_0, y_0, \ldots, y_9 \in \zz, y_9 \geq 0, \text{ and } g_{\tilde{A}}(f(ay_0, y_0), y_1, \ldots, y_9) = 0 \right)
\end{equation}
Since $\zz$ is $\forall_m$-$\Lar$-definable in $\qq$ and the set of non-negative elements is $\forall_4$-$\Lar$-definable in $\qq$ by Euler's Four-Square Theorem, %TODO make this better
we obtain the desired $\exists_{10}\forall_{m}$-$\Lar$-definability of $A$ in $\qq$.
If $A \subseteq \zz$ then one may remove the quantification over $y_0$ in \eqref{eq:AE} and equivalently write 
\begin{equation}
\exists y_1, \ldots, y_9 \in \qq \left( y_1, \ldots, y_9 \in \zz, y_9 \leq 0, \text{ and } g_{\tilde{A}}(f(a, 1), y_1, \ldots, y_9) = 0 \right)
\end{equation}
to obtain that $A$ is $\exists_9\forall_m$-$\Lar$-definable in $\qq$.

For the final statement, fix a recursively enumerable subset $A$ of $\nat$ such that $\nat \setminus A$ is not recursively enumerable (in other words, $A$ is not recursive).
By the above, $A$ is $\exists_9\forall_m$-$\Lar$-definable in $\qq$.
But since $A$ is not recursive, there cannot be an algorithm which decides whether a given element of $\qq$ lies in $A$.
This shows that the $\exists_9\forall_{m}$-$\Lar$-theory - or, equivalently, the $\forall_9\exists_{m}$-$\Lar$-theory - of $\qq$ is undecidable.
\end{proof}
\begin{gev}\label{C:undecidable}
Every recursively enumerable subset of $\qq$ is $\exists_{10}\forall_{10}$-$\Lar$-definable in $\qq$.
Furthermore, every recursively enumerable subset of $\zz$ is $\exists_9\forall_{10}$-$\Lar$-definable in $\qq$.

In particular, the $\forall_{9}\exists_{10}$-$\Lar$-theory of $\qq$ is undecidable.
\end{gev}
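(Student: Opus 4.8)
The plan is to obtain this as an immediate consequence of Proposition \ref{P:undecidable} applied with $m=10$; the one hypothesis of that proposition that must be checked is that $\zz$ is $\forall_{10}$-$\Lar$-definable in $\qq$, which I will derive from Theorem \ref{T:nicomainthmQO}.

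First I would verify that $\zz$ itself is $\forall_{10}$-$\Lar$-definable in $\qq$. Fix a prime $p$. By Theorem \ref{T:nicomainthmQO} applied to $K=\qq$ and $S=\lbrace v_p\rbrace$, the ring $\zz[1/p]=\bigcap_{v\in\mc{V}_\qq\setminus\lbrace v_p\rbrace}\mc{O}_v$ has a $\forall_{10}$-$\Lar(\qq)$-definition, so by \Cref{P:EtoAgeneral} the set $\bigcup_{v\in\mc{V}_\qq\setminus\lbrace v_p\rbrace}\mf{m}_v$ has an $\exists_{10}$-$\Lar(\qq)$-definition. Moreover $\mf{m}_{v_p}$ is $\exists_3$-$\Lar(\qq)$-definable: fixing a uniformiser $\pi$ of $v_p$ one has $\mf{m}_{v_p}=\lbrace x\in\qq\mid x\pi^{-1}\in\mc{O}_{v_p}\rbrace$, and $\mc{O}_{v_p}$ is $\exists_3$-$\Lar(\qq)$-definable by \Cref{P:valE3}. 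Since a union of an $\exists_{10}$-definable set and an $\exists_3$-definable set is $\exists_{10}$-definable, the set
\begin{displaymath}
\bigcup_{v\in\mc{V}_\qq}\mf{m}_v=\biggl(\bigcup_{v\in\mc{V}_\qq\setminus\lbrace v_p\rbrace}\mf{m}_v\biggr)\cup\mf{m}_{v_p}
\end{displaymath}
is $\exists_{10}$-$\Lar(\qq)$-definable, and hence by \Cref{P:EtoAgeneral} again $\zz=\bigcap_{v\in\mc{V}_\qq}\mc{O}_v$ is $\forall_{10}$-$\Lar$-definable in $\qq$. (Alternatively, one observes that the proof of Theorem \ref{T:nicomainthmQO} applies equally with $S=\emptyset$, since it begins by enlarging $S$ to a suitable non-empty finite set in any case.)

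With this in hand, the hypotheses of Proposition \ref{P:undecidable} are met for $m=10\geq 4$, and its three conclusions are exactly the three assertions of the corollary: every recursively enumerable subset of $\qq$ is $\exists_{10}\forall_{10}$-$\Lar$-definable in $\qq$, every recursively enumerable subset of $\zz$ is $\exists_9\forall_{10}$-$\Lar$-definable in $\qq$, and the $\forall_9\exists_{10}$-$\Lar$-theory of $\qq$ is undecidable.

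I do not expect a genuine obstacle here: all the weight is carried by Theorem \ref{T:nicomainthmQO} and by Proposition \ref{P:undecidable}, the latter encoding the passage from universal definability of $\zz$ to $\exists_\ast\forall_\ast$-definability of arbitrary recursively enumerable sets via Sun's nine-variable representation of recursively enumerable subsets of $\nat$ together with Euler's four-square theorem. The only point demanding any care is the bookkeeping that turns Theorem \ref{T:nicomainthmQO}, which is phrased for non-empty $S$, into the statement for $\zz$ (the case $S=\emptyset$), and the short computation above settles that.
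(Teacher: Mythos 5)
Your proof is correct and follows the same route as the paper's one-line proof: apply Proposition \ref{P:undecidable} with $m = 10$, using Theorem \ref{T:nicomainthmQO} to supply the $\forall_{10}$-$\Lar$-definition of $\zz$ in $\qq$. You correctly noticed a genuine (if minor) imprecision in the paper that the stated hypothesis of Theorem \ref{T:nicomainthmQO} requires $S$ non-empty, so it does not literally cover $\zz = \bigcap_{v\in\mc{V}_\qq}\mc{O}_v$; your workaround via $S=\{v_p\}$ together with Lemma \ref{P:EtoAgeneral} and Proposition \ref{P:valE3} (or, alternatively, the observation that the proof of Theorem \ref{T:nicomainthmQO} begins by enlarging $S$ and so in fact handles $S=\emptyset$ without change) closes this gap, which the paper's own proof glosses over.
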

\begin{proof}
This follows from \Cref{P:undecidable} and \Cref{T:nicomainthmQO}.
\end{proof}

\printbibliography
\end{document}